\definecolor{citegreen}{rgb}{0,0.6,0}
\definecolor{refred}{rgb}{0.8,0,0}
\title{The Chern-Gauss-Bonnet formula for singular non-compact four-dimensional manifolds}
\author{Reto Buzano and Huy The Nguyen}
\date{}
\providecommand{\abs}[1]{\lvert #1\rvert}
\newcommand{\Rm}{\mathrm{Rm}}
\newcommand{\Rc}{\mathrm{Rc}}
\newcommand{\R}{\mathrm{R}}
\newcommand{\RR}{\mathbb{R}}
\newcommand{\Sph}{\mathbb{S}}
\newcommand{\cH}{\mathcal{H}}
\newcommand{\tr}{\mathrm{tr}}
\newcommand{\eps}{\varepsilon}
\newcommand{\Lap}{\triangle}
\newcommand{\vol}{\mathrm{vol}}
\newcommand{\divop}{\mathrm{div}}
\newcommand{\dB}{\partial B}
\newcommand{\dt}{\frac{\partial}{\partial t}}
\newcommand{\ddt}{\frac{\partial^2}{\partial t^2}}
\def\Xint#1{\mathchoice
   {\XXint\displaystyle\textstyle{#1}}
   {\XXint\textstyle\scriptstyle{#1}}
   {\XXint\scriptstyle\scriptscriptstyle{#1}}
   {\XXint\scriptscriptstyle\scriptscriptstyle{#1}}
   \!\int}
\def\XXint#1#2#3{{\setbox0=\hbox{$#1{#2#3}{\int}$}
     \vcenter{\hbox{$#2#3$}}\kern-.5\wd0}}
\def\dashint{\Xint-}
\theoremstyle{plain}
\newtheorem{lemma}{Lemma}[section]
\newtheorem{thm}[lemma]{Theorem}
\newtheorem{theorem}[lemma]{Theorem}
\newtheorem{cor}[lemma]{Corollary}
\newtheorem{remark}[lemma]{Remark}
\newtheorem{defn}[lemma]{Definition}
\newtheorem{exam}[lemma]{Example}
\newtheorem{rem}{Remark\!\!}
\numberwithin{equation}{section}
\begin{document}
\maketitle
\begin{abstract}
We generalise the classical Chern-Gauss-Bonnet formula to a class of $4$-dimensional manifolds with finitely many conformally flat ends and singular points. This extends results of Chang-Qing-Yang in the smooth case. Under the assumptions of finite total $Q$ curvature and positive scalar curvature at the ends and at the singularities, we obtain a new Chern-Gauss-Bonnet formula with error terms that can be expressed as isoperimetric deficits. This is the first such formula in a dimension higher than two which allows the underlying manifold to have isolated branch points or conical singularities.
\end{abstract}


\section{Introduction}\label{sect.intro}

Relating the local geometry and global topology of manifolds constitutes one of the main aims of differential geometry. One of the most fundamental results is the Gauss-Bonnet formula
\begin{equation*}
\int_M K_g dV_g = 2\pi\chi(M),
\end{equation*}

which gives a link between the topology of a closed surface $(M,g)$ and its Gauss curvature. In particular, this formula yields topological obstructions to the existence of certain metrics, for example no torus $T^2 = \Sph^1 \times \Sph^1$ (with Euler characteristic zero) carries a metric $g$ of positive Gauss curvature $K_g>0$. It is well known that for complete non-compact surfaces or surfaces with singularities the Gauss-Bonnet formula requires additional terms known as isoperimetric deficit, which infinitesimally measure the deviation from flat Euclidean space. Such formulas have been extensively studied over the last eighty years, see for example~\cite{CV35,H57,H64,F65,JM,S85,LT,T91,CL95,N12} for some of the most important results.\\

A generalisation of the Gauss-Bonnet theorem to higher-dimensional compact Riemannian manifolds was discovered by Chern \cite{Ch43}. In particular, in the case of a compact $4$-dimensional manifold, the Chern-Gauss-Bonnet formula states that
\begin{equation}\label{eq.CGB}
32\pi^2\chi(M) = \int_M \big(\abs{\Rm_g}_g^2-4\abs{\Rc_g}_g^2+\R_g^2\big)dV_g = \int_M \big(\abs{W_g}_g^2+8Q_g\big) dV_g,
\end{equation}

where $\Rm_g$, $\Rc_g$, $R_g$ and $W_g$ denote the Riemannian, Ricci, scalar and Weyl curvature of $(M^4,g)$, respectively, and 
\begin{equation}\label{eq.Q}
Q_g:=-\tfrac{1}{12}\big(\Lap_g \R_g -\R_g^2 +3\abs{\Rc_g}_g^2\big)
\end{equation}

is the Paneitz $Q$ curvature introduced by Branson \cite{B85}. Under a conformal change $g=e^{2w}g_0$, this scalar quantity transforms by
\begin{equation}\label{eq.Paneitz1}
P_{g_0}w+2Q_{g_0}=2Q_g e^{4w},
\end{equation}

where $P_g$ denotes the Paneitz operator, introduced in \cite{P83} and defined by 
\begin{equation}\label{eq.P}
P_g(\varphi):= \Lap_g^2\varphi+\divop_g\big(\tfrac{2}{3}\R_g g-2\Rc_g\big)d\varphi, \qquad \varphi\in C^\infty(M).
\end{equation}

Under the same conformal change as above, the Paneitz operator transforms by
\begin{equation}
P_g=e^{-4w}P_{g_0}.
\end{equation}

These transformation laws are the higher-dimensional equivalents of the classical formulas for surfaces, stating that for $g=e^{2w}g_0$ we have
\begin{equation*}
-\Lap_{g_0} w + K_{g_0} = K_g e^{2w}, \qquad \Lap_g = e^{-2w}\Lap_{g_0}.
\end{equation*}

For more details on the properties of the Paneitz operator and the $Q$ curvature, we refer the reader to \cite{P83,B85,BCY,CQ}.\\

If $M$ is non-compact or singular, very little is known about how the Chern-Gauss-Bonnet formula should look like. In the smooth case, besides the classical results of Cheeger-Gromov on manifolds with bounded geometry \cite{CG} and the results of Greene-Wu on complete four-manifolds with positive sectional curvature \cite{GW}, the most general results are due to Chang-Qing-Yang \cite{CQY1,CQY2}. (See also \cite{Fa05,NX} for higher-dimensional versions of similar results.) In the singular case, this problem has only been studied in the special case of so-called edge-cone singularities \cite{LS98,AL} or $V$-manifolds \cite{Sat57}, but no results seem to be known that allow the underlying manifold to have isolated singular points.\\

The goal of this article is to develop a formula for a large class of manifolds $M$ which are diffeomorphic to a compact manifold with finitely many points removed, allowing in particular both complete ends and finite area interior branch points. For the reader's convenience and simplicity of exposition, we first state the result in the simple situation of a conformal metric $e^{2w}\abs{dx}^2$ on $\RR^4\setminus\{0\}$ having one complete end at infinity and one finite area singular point at the origin (see Theorem \ref{thm.R4} for the precise assumptions). This is a model case for the more general situation of manifolds with many conformally flat ends and singular points, see Definition \ref{def.manifold} and Theorem \ref{thm.manifold}.

\begin{thm}[Chern-Gauss-Bonnet formula for singular metrics conformal to $\RR^4$]\label{thm.R4}
Let $g=e^{2w}\abs{dx}^2$ be a metric on $\RR^4\setminus\{0\}$ which is complete at infinity and has finite area over the origin. If $g$ has finite total $Q$ curvature
\begin{equation}\label{eq.finiteQ}
\int_{\RR^4}\abs{Q_g}\, dV_g=\int_{\RR^4}\abs{Q_g}\, e^{4w}\,dx<\infty,
\end{equation}
and non-negative scalar curvature at infinity and at the origin, i.e.
\begin{equation}\label{eq.nonnegR}
\inf_{\RR^4\setminus B_{r_2}(0)} \R_g(x)\geq 0,\quad \inf_{B_{r_1}(0)} \R_g(x)\geq 0,
\end{equation}
for some $0<r_1\leq r_2<\infty$, then we have
\begin{equation}\label{eq.main}
\chi(\RR^4)-\frac{1}{4\pi^2}\int_{\RR^4}Q_g e^{4w}\,dx = \nu-\mu,
\end{equation}
where
\begin{equation}\label{eq.nu1}
\nu :=\lim_{r\to\infty}\, \frac{\vol_g(\dB_r(0))^{4/3}}{4(2\pi^2)^{1/3}\vol_g(B_r(0))},\qquad \mu :=\lim_{r\to 0}\, \frac{\vol_g(\dB_r(0))^{4/3}}{4(2\pi^2)^{1/3}\vol_g(B_r(0))}-1.
\end{equation}
\end{thm}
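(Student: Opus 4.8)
The plan is to reduce the statement to a careful analysis of the conformal factor $w$ near the two singular points (the origin and infinity), using the Paneitz equation \eqref{eq.Paneitz1}, which in the flat background reads $\Lap^2 w = 2Q_g e^{4w}$ on $\RR^4\setminus\{0\}$. The Chern--Gauss--Bonnet formula \eqref{eq.CGB} applied on a large annulus $B_{R}(0)\setminus B_{\rho}(0)$, together with the fact that $\abs{W_g}_g^2 dV_g$ is a conformal invariant that vanishes identically for a metric conformal to the flat one, leaves only the $Q$-curvature term and boundary contributions. The first main step is therefore to establish a Chern--Gauss--Bonnet formula with boundary: for the flat-conformal metric on the annulus one has $8\int_{B_R\setminus B_\rho} Q_g\,dV_g$ plus a boundary integral over $\dB_R$ and $\dB_\rho$ equal to $32\pi^2$ times the (annulus) Euler characteristic contribution, i.e. the usual transgression term. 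Since the annulus is homotopy equivalent to $\Sph^3$, this boundary term is purely geometric, and one must compute it explicitly in terms of $w$.

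The second step is to identify the asymptotic behaviour of $w$ at each end. Writing $v(x) := \tfrac{1}{4\pi^2}\int_{\RR^4}\log\tfrac{|y|}{|x-y|}\,2Q_g(y)e^{4w(y)}\,dy$ for the Newtonian-type potential built from the finite measure $2Q_g e^{4w}\,dx$ (this is where \eqref{eq.finiteQ} is essential), one checks that $\Lap^2(w-v)=0$, so $w-v$ is a polyharmonic, hence after controlling growth a harmonic or polynomial, function; the completeness at infinity and the finite-area condition at the origin, combined with the sign conditions \eqref{eq.nonnegR} on the scalar curvature $\R_g = -2e^{-2w}(2\Lap w + |\nabla w|^2)$, should force $w$ to have the model logarithmic asymptotics $w(x) = -\alpha\log|x| + O(1)$ as $|x|\to\infty$ and $w(x) = \beta\log|x| + O(1)$ as $|x|\to 0$, with precise first-order control on the gradient. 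Here $\alpha$ and $\beta$ are, up to normalisation, the total $Q$-curvature mass, so that $\tfrac{1}{4\pi^2}\int_{\RR^4}Q_g e^{4w}\,dx$ gets split as a contribution from the end plus a contribution from the singularity. This step is the technical heart and the expected main obstacle: ruling out anomalous (e.g. polynomial or oscillatory) behaviour of $w$ and upgrading integral decay of $Q_g e^{4w}$ to pointwise $C^1$ asymptotics of $w$ — precisely the kind of delicate potential-theoretic argument that in the smooth case occupies the bulk of Chang--Qing--Yang's work, now complicated by the presence of the interior singular point.

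The third step is to evaluate the boundary/transgression integral in the limits $R\to\infty$ and $\rho\to 0$ using the asymptotics from step two, and to recognise the result as the geometric quantities $\nu$ and $\mu$ in \eqref{eq.nu1}. Concretely, for a metric that is asymptotically $e^{-2\alpha\log|x|}|dx|^2$ one computes $\vol_g(\dB_r) \sim c\, r^{3(1-\alpha)}$ and $\vol_g(B_r)\sim c'\, r^{4(1-\alpha)}$, so that the ratio $\vol_g(\dB_r)^{4/3}/\bigl(4(2\pi^2)^{1/3}\vol_g(B_r)\bigr)$ converges, in the limit, to a constant depending only on $\alpha$ (and equals $1$ exactly when $\alpha=0$, i.e. in the Euclidean case); the normalisation $4(2\pi^2)^{1/3}$ is chosen precisely so that this constant matches the boundary transgression term. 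The same computation at the origin, with exponent $\beta$, produces $\mu+1$, and the $-1$ accounts for the fact that shrinking a point in $\RR^4$ contributes $+1$ to the Euler characteristic. Matching the two sides and using $\chi(\RR^4)=1$ then yields \eqref{eq.main}. The final bookkeeping — that the sum of the boundary terms plus the split $Q$-curvature mass reassembles into exactly $\chi(\RR^4)$ with no leftover constant — is forced by applying the same computation to the model Euclidean metric, for which every term is known.
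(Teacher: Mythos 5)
Your high-level strategy has the right ingredients: the Paneitz equation $\Lap^2 w = 2Q_g e^{4w}$, an integral (Newtonian potential) representation of $w$ with finite total $Q$-curvature as the hypothesis making it converge, a biharmonic remainder to be controlled, and asymptotic logarithmic behaviour of $w$ at both ends as the eventual payoff. This matches the paper's decomposition into a ``normal metric'' part. But your Step~2 contains the real gap, and the way the paper closes it is genuinely different from what you propose.

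You write that ``the completeness at infinity and the finite-area condition at the origin, combined with the sign conditions \eqref{eq.nonnegR}\ldots should force $w$ to have the model logarithmic asymptotics $w(x)=-\alpha\log|x|+O(1)$\ldots with precise first-order control on the gradient'', and you yourself flag ``upgrading integral decay of $Q_g e^{4w}$ to pointwise $C^1$ asymptotics of $w$'' as the expected main obstacle. That obstacle is real, and the paper never surmounts it in the form you state: pointwise $C^1$ asymptotics of $w$ are \emph{not} established. Instead the paper sidesteps this entirely by a spherical-averaging device. One introduces $\bar w(r)=\dashint_{\dB_r(0)}w\,d\sigma$, which is rotationally symmetric, reduces the Paneitz equation to the ODE $v''''-4v''=F$ with $v=\bar w(e^t)+t$, and solves explicitly for $v'(t)$ using only $\int|F|<\infty$; the scalar curvature sign condition then kills the two exponential modes $e^{\pm 2t}$ (Lemma~\ref{lemma.c23}), and $\lim v'(t)$ is identified with the isoperimetric ratio by L'H\^opital (Lemma~\ref{lemma.limits}). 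The passage from $w$ to $\bar w$ is justified by the averaged estimates of Lemma~\ref{lemma.1} and Lemma~\ref{lemma.2}, which are much weaker than pointwise control and are exactly tailored to show that the mixed volumes built from $g$ and from $\bar g$ agree asymptotically. Your proposal has no substitute for this averaging step, and without it the ``transgression integral'' in your Step~3 cannot be evaluated.

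Two further points. First, your claim that $\Lap^2(w-v)=0$ makes $w-v$ a biharmonic function whose growth can be controlled glosses over the fact that the biharmonicity holds a priori only on $\RR^4\setminus\{0\}$; the novel technical ingredient of the paper relative to Chang--Qing--Yang is precisely the removal of the origin singularity via B\^ocher's theorem, which extracts a $\log|x|$ term with the \emph{correct sign} and leaves a biharmonic function smooth across the origin. This is where the finite-area-over-the-origin hypothesis is genuinely used ($\alpha>-1$), and it must be made explicit; it is not a byproduct of ``controlling growth.'' Second, there are small but propagating errors: the scalar-curvature transformation in dimension $4$ is $\R_g e^{2w}=-6e^{-w}\Lap e^w=-6(\Lap w+|\nabla w|^2)$, not $-2(2\Lap w+|\nabla w|^2)$, and the normalising constant in your Newtonian potential carries an extra factor of $2$ (the correct kernel yields $\Lap^2 v=2Q_g e^{4w}$ with coefficient $\tfrac{1}{4\pi^2}$ in front of $\int\log\tfrac{|y|}{|x-y|}Q_g e^{4w}dy$, not $2Q_g e^{4w}$ inside). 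These need to be fixed before the bookkeeping in your Step~3 can possibly close.
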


The simplest and at the same time fundamental example that illustrates this result and that is not covered by previously known Chern-Gauss-Bonnet type formulas is the following conical metric.

\begin{exam}
Let us consider the manifold $(\RR^4 \setminus \{0\},g^\alpha)$, where the metric $g^\alpha$ is defined by $g^\alpha_{ij} = \delta _{ij} + \alpha\frac{x_ix_j}{r^2}$ for some $-1<\alpha< \infty$. Here, $r^2 = \abs{x}^2$, where $\abs{x}$ denotes the Euclidean norm. If $\alpha=0$, this is simply Euclidean space. More generally, the metric $g^\alpha$ is a conformal deformation of the Euclidean metric, as can be seen as follows. First note that in spherical co-ordinates, the metric may be written as 
\begin{align*}
g^\alpha=(1+\alpha)dr^2 + r^2 d\sigma_{\Sph^3}. 
\end{align*}    
Hence, we can reparametrise with $s = r^{\sqrt{1+\alpha}}$ so that 
\begin{equation*}
dr^2=\tfrac{1}{1+\alpha}s^{2(\frac{1}{\sqrt{1+\alpha}}-1)}ds^2, \qquad r^2=s^{\frac{2}{\sqrt{1+\alpha}}}
\end{equation*}
and the metric becomes
\begin{equation*}
g^\alpha = s^{2(\frac{1}{\sqrt{1+\alpha}}-1)}\Big(ds^2 + s^2 d\sigma_{\Sph^3}\Big)= s^{2(\frac{1}{\sqrt{1+\alpha}}-1)} g_{\RR^4}.
\end{equation*}
Hence $g^\alpha$ is a complete metric conformal to $g_{\RR^4}$ with conformal factor
\begin{align*}
e^{2w(r)} = r^{2(\frac{1}{\sqrt{1+\alpha}} -1)}
\end{align*}
so that $w(r) = (\frac{1}{\sqrt{1+\alpha}} -1) \log r$. Computing, we see that
\begin{align*}
\nu =\lim_{r\to\infty}\, \frac{\vol_g(\dB_r(0))^{4/3}}{4(2\pi^2)^{1/3}\vol_g(B_r(0))} = \frac{1}{\sqrt{1 + \alpha}}, \quad
\mu = \frac{1}{\sqrt{1+\alpha}}-1.
\end{align*}
We note that for this cone in dimension four,
\begin{align*}
\R_{g^\alpha} =  \frac{6\alpha e^{-2w(r)}}{(1+\alpha)r^2}, \quad \abs{\Rc_{g^\alpha}}_{g^\alpha}^2 =  \frac{12\alpha^2 e^{-4w(r)}}{(1+\alpha)^2 r^4}, \quad \Lap_{g^\alpha} R_{g^\alpha} = 0. 
\end{align*}
so that $Q_{g^\alpha} = 0.$ Hence in this case
\begin{align*}
1 = \chi(\RR^4) - \frac{1}{4\pi^2} \int_{\RR^4} Q_{g^\alpha}\; dV_{g^\alpha}= \nu - \mu.
\end{align*}
\end{exam}

Theorem \ref{thm.R4} above shows that the deficit in the Chern-Gauss-Bonnet formula \eqref{eq.main} is given by the limits of some isoperimetric ratios. In contrast to the situation on surfaces, there are several different isoperimetric ratios in higher dimensions. Recall that for $\Omega$ a convex domain in $\RR^n$ and $1\leq m<n$, the mixed volumes $V_m(\Omega)$, as defined by Trudinger \cite{T97}, are given by
\begin{equation}
V_m(\Omega)=\frac{1}{n\binom{n-1}{m}}\int_{\partial\Omega} H_{n-1-m}\,[\partial\Omega]\,d\cH^{n-1},
\end{equation}

where $H_{k}\,[\partial\Omega]$ denotes the $k$-th symmetric polynomial in the principal curvatures of $\partial\Omega$ and $\cH^{n-1}$ is the $(n-1)$-dimensional Hausdorff measure. In particular, if $\Omega=B_r(0)$ is a ball in $\RR^4$ centred at the origin, we obtain the following mixed volumes and isoperimetric ratios.

\begin{defn}\label{defn.ratios}
We define the volumes $V_k(r)$ by
\begin{equation}\label{eq.Vs1}
\begin{aligned}
& V_4(r) =\int_{B_r(0)} e^{4w}\, dx,\qquad && V_3(r) =\frac{1}{4}\int_{\dB_r(0)} e^{3w}\, d\sigma(x),\\
& V_2(r) =\frac{1}{12}\int_{\dB_r(0)}H_1\, e^{3w}\, d\sigma(x),\qquad && V_1(r) =\frac{1}{12}\int_{\dB_r(0)}H_2\, e^{3w}\, d\sigma(x).
\end{aligned}
\end{equation}
The isoperimetric ratios $C_{k,\ell}(r)$ are then defined by
\begin{equation}\label{eq.Cs1}
\begin{aligned}
C_{3,4}(r)&=\frac{V_3^{4/3}(r)}{(\pi^2/2)^{1/3}\,V_4(r)},\\
C_{2,3}(r)&=\frac{V_2(r)}{(\pi^2/2)^{1/3}\,V_3^{2/3}(r)},\\
C_{1,2}(r)&=\frac{V_1^{2/3}(r)}{(\pi^2/2)^{1/3}\,V_2^{1/3}(r)},
\end{aligned}
\end{equation}
and
\begin{equation}\label{eq.Cs2}
\begin{aligned}
C_{2,4}(r)&=C_{3,4}^{1/3}(r)\cdot C_{2,3}^{2/3}(r),\\
C_{1,3}(r)&=C_{2,3}^{1/4}(r)\cdot C_{1,2}^{3/4}(r),\\
C_{1,4}(r)&=C_{3,4}^{1/9}(r)\cdot C_{2,3}^{2/9}(r)\cdot C_{1,2}^{2/3}(r).\\
\end{aligned}
\end{equation}
\end{defn}

Our second result says that asymptotically, all these isoperimetric volume ratios agree in our setting.

\begin{thm}[Isoperimetric volume ratios agree asymptotically]\label{thm.ratios}
Let $g$ be a metric as in Theorem \ref{thm.R4}, and let $\mu$ be defined as in \eqref{eq.nu1}. If 
\begin{equation*}
1 + \mu - \frac{1}{4\pi^2} \int_{\RR^4} Q_g e^{4w} dx > 0
\end{equation*}
then the Chern-Gauss-Bonnet formula \eqref{eq.main} holds for $\nu$ given by
\begin{equation}\label{eq.nu2}
\nu :=\lim_{r\to\infty}\, C_{k,\ell}(r),
\end{equation}
for \emph{any} choice of $1\leq k<\ell\leq 4$.
Alternatively, if $\nu$ is defined as in \eqref{eq.nu1} and satisfies 
\begin{equation*} 
1 - \nu + \frac {1}{4\pi^2} \int_{\RR^4} Q_g e^{4w} dx > 0
\end{equation*}
then the Chern-Gauss-Bonnet formula \eqref{eq.main} holds for $\mu$ given by
\begin{equation}\label{eq.nu3}
\mu := \lim_{r \to 0} C_{k,\ell}(r)-1.
\end{equation}
for \emph{any} choice of $1\leq k<\ell\leq 4$.
\end{thm}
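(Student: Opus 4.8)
The plan is to show that each isoperimetric ratio $C_{k,\ell}(r)$ has the same limit as $r\to\infty$ (resp.\ $r\to 0$), so that $\nu$ (resp.\ $\mu$) is independent of the pair $(k,\ell)$, and that this common limit can be substituted into \eqref{eq.main}. By the algebraic identities \eqref{eq.Cs2}, it suffices to treat the three \emph{basic} ratios $C_{3,4}$, $C_{2,3}$ and $C_{1,2}$: once these three are shown to converge to a common value, the mixed ratios $C_{2,4}$, $C_{1,3}$, $C_{1,4}$ converge to the same value by their defining weighted-geometric-mean formulas. So the heart of the matter is to understand the asymptotics of $V_4(r)$, $V_3(r)$, $V_2(r)$ and $V_1(r)$ individually.

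First I would extract from the proof of Theorem \ref{thm.R4} (which I am allowed to assume) the asymptotic expansion of the conformal factor $w$ near infinity and near the origin. Under the finite-total-$Q$-curvature and sign-of-scalar-curvature hypotheses, $w$ has a logarithmic leading term: there are constants so that $w(x) = -\alpha_\infty \log\abs{x} + o(\log\abs{x})$ as $\abs{x}\to\infty$ and $w(x) = -\alpha_0 \log\abs{x} + O(1)$ as $\abs{x}\to 0$, where $\alpha_\infty$, $\alpha_0$ are determined by the total $Q$ curvature (completeness at infinity and finite area at the origin force the relevant sign of these coefficients). In fact, from \eqref{eq.main} and \eqref{eq.nu1} one reads off that the volume ratio $\nu$ equals (the limiting value of) $e$ to the appropriate multiple of this coefficient; concretely $\nu$ plays the role of $1/\sqrt{1+\alpha}$ in the model cone example. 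The key point is that, after the conformal change, $\dB_r(0)$ is a round metric sphere to leading order, with radius $\sim r\,e^{w(r)}$, and its principal curvatures with respect to $g$ are $\sim (r\,e^{w(r)})^{-1}(1+o(1))$ in every direction; hence $H_1 \sim 3(r e^{w})^{-1}$ and $H_2 \sim 3 (r e^{w})^{-2}$ to leading order. Plugging these into \eqref{eq.Vs1} shows that all four $V_k(r)$ are, to leading order, powers of the single quantity $\rho(r):=r\,e^{w(r)}$ times explicit dimensional constants coming from the unit $3$-sphere (whose volume is $2\pi^2$), exactly arranged so that $C_{3,4}$, $C_{2,3}$, $C_{1,2}$ all reduce to the same ratio of these constants times $e$ to the same multiple of the log-coefficient — i.e.\ they share a common limit.

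The second step is the substitution into the Chern-Gauss-Bonnet identity. Theorem \ref{thm.R4} already gives \eqref{eq.main} with $\nu,\mu$ defined by the specific ratio $\vol_g(\dB_r)^{4/3}/(4(2\pi^2)^{1/3}\vol_g(B_r))$, which is precisely $C_{3,4}(r)$ up to the normalisation: indeed $V_4 = \vol_g(B_r)$, $V_3 = \tfrac14\vol_g(\dB_r)$, so $C_{3,4}(r) = \vol_g(\dB_r)^{4/3}/(4(2\pi^2)^{1/3}\vol_g(B_r))$ exactly. Thus the $C_{3,4}$ case is literally Theorem \ref{thm.R4}, and the content of Theorem \ref{thm.ratios} is that the limits of the other ratios coincide with it; the positivity hypotheses $1+\mu-\tfrac{1}{4\pi^2}\int Q_g e^{4w} > 0$ (resp.\ $1-\nu+\tfrac{1}{4\pi^2}\int Q_g e^{4w}>0$) are exactly what guarantees, via \eqref{eq.main}, that $\nu>0$ (resp.\ $\mu > -1$), i.e.\ that the relevant log-coefficient lies in the admissible range $(-1,\infty)$ so that the sphere does not degenerate and the leading-order analysis above is valid.

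The main obstacle I expect is controlling the \emph{error terms} in the expansions of $H_1$ and $H_2$ on $\dB_r(0)$ well enough that they do not contribute in the limit. The curvatures $H_1, H_2$ involve two derivatives of $w$, and although finite total $Q$ curvature controls $w$ pointwise to leading order, one needs integral (e.g.\ $W^{2,p}$-type on dyadic annuli, or a Harnack/gradient estimate) control on $\nabla w$ and $\nabla^2 w$ to conclude that $\dashint_{\dB_r} H_1 \,e^{3w}\,d\sigma = 3(\rho(r))^{-1}\vol_g(\dB_r)(1+o(1))$ and similarly for $H_2$. I would handle this by the standard dichotomy: use the representation of $w$ as a Riesz-type potential of $Q_g e^{4w}$ (as in the proof of Theorem \ref{thm.R4}), split into a main logarithmic term plus a remainder with small total $Q$-curvature mass on the far annuli, and apply elliptic/potential estimates on each annulus $B_{2r}\setminus B_r$ rescaled to unit size; the smallness of the tail mass then forces the rescaled remainder to be small in $C^1$ on the unit annulus, which is enough. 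The origin case is entirely analogous, working on $B_r\setminus B_{r/2}$ as $r\to 0$ and using the finite-area assumption in place of completeness.
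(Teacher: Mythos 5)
Your high-level plan is sound in several places: you correctly reduce everything to the three basic ratios via \eqref{eq.Cs2}; you correctly observe that $C_{3,4}$ is literally the ratio in Theorem \ref{thm.R4}; and you correctly identify that the positivity hypotheses are there to keep $\nu>0$ (resp.\ $\mu>-1$) so that the sphere does not degenerate. You also correctly flag the genuine difficulty: $V_1$ and $V_2$ involve $\partial_r w$ through $H_1$ and $H_2$.

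However, the step you propose for handling that difficulty would fail. You claim that after rescaling a dyadic annulus to unit size, smallness of the tail $L^1$-mass of $Q_g e^{4w}$ ``forces the rescaled remainder to be small in $C^1$ on the unit annulus.'' That is not true for merely $L^1$ data. Since $w$ is (up to the harmonic/log part) a $\log$-potential of $F=2Q_g e^{4w}$, its gradient is a Riesz potential with kernel $\sim |x-y|^{-1}$ in $\RR^4$; convolving with an $L^1$ function of small mass gives control in weak-$L^4$ but certainly not in $L^\infty$. If $F$ concentrates near a point on the annulus, $\nabla w$ blows up there no matter how small $\|F\|_{L^1}$ is. So the bound $\dashint_{\dB_r} H_1\,e^{3w}\,d\sigma = 3\rho(r)^{-1}\vol_g(\dB_r)(1+o(1))$ cannot be extracted via pointwise $C^1$ estimates; you need to bound \emph{averages} of $\partial_r w$ and $(\partial_r w)^2$ on spheres. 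In the same vein, your heuristic that the principal curvatures of $\dB_r(0)$ are $\sim (re^{w})^{-1}(1+o(1))$ is too strong pointwise: they are $e^{-w}\bigl(\tfrac1r+\partial_r w\bigr)$, and $\partial_r w$ need not be close to $-\alpha/r$ pointwise, only after spherical averaging.

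The paper resolves this by never attempting pointwise control. It first proves Theorem \ref{thm.ratios} in the rotationally symmetric case, where \eqref{eq.Vs3} gives the exact identities $C_{2,3}(e^t)=C_{1,2}(e^t)=C_{1,3}(e^t)=v'(t)$ (with $C_{3,4}\to v'$ via L'H\^opital), and then reduces the general (generalised normal) case to the averaged metric $\bar g=e^{2\bar w}|dx|^2$ through the integral estimates in Lemma \ref{lemma.1} (exponential moments of $w$ on spheres) and Lemma \ref{lemma.2} (integral moments of $\partial_r w$ on spheres, showing $\dashint(\partial_r w)^2 = (\partial_r\bar w)^2 + o(r^{-2})$). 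Theorem \ref{thm.sec3} then yields $V_j(r)=\bar V_j(r)(1+\varepsilon(r))$ for $j=1,2$, and it is exactly at this comparison that the hypothesis $\lim (1+r\partial_r\bar w)>0$ enters: the error term $|\dB_r|\cdot r^{-j}e^{j\bar w}o(1)$ is only $o(\bar V_j)$ when $1+r\partial_r\bar w$ stays bounded away from zero. So the positivity assumptions are not merely a non-degeneracy bookkeeping device, they are what makes the error-quotient go to zero. To repair your proposal, replace the proposed $C^1$ control on annuli with the spherical-average estimates of Lemmas \ref{lemma.1} and \ref{lemma.2}, and compare with the averaged metric $\bar g$ rather than with a model logarithmic profile directly.
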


\begin{rem}
Of course, picking $k=3$ and $\ell=4$ in \eqref{eq.nu2} or \eqref{eq.nu3}, we obtain \eqref{eq.nu1}.
\end{rem}

In order to state the main result, generalising Theorem \ref{thm.R4} to the case of $4$-manifolds with finitely many ends and singular points, let us first explain which type of manifolds we are exactly considering here.
\begin{defn}\label{def.manifold}
We say $(M,g)$ is a $4$-manifold with finitely many conformally flat complete ends and finite-area singular points, if
\begin{equation*}
M=N\cup\Big(\bigcup_{i=1}^k E_i\Big)\cup\Big(\bigcup_{j=1}^{\ell} S_j\Big)
\end{equation*}
where $(N,g)$ is a compact Riemannian manifold with boundary
\begin{equation*}
\partial N=\Big(\bigcup_{i=1}^k \partial E_i\Big)\cup\Big(\bigcup_{j=1}^{\ell} \partial S_j\Big)
\end{equation*}
and the $E_i$, $S_j$ satisfy the following properties. Each $E_i$ is a conformally flat simple end, meaning that
\begin{equation}\label{eq.ends}
(E_i,g)=(\RR^4\setminus B,e^{2e_i}\abs{dx}^2)
\end{equation}
for some function $e_i(x)$, where $B$ is the unit ball in $\RR^4$ and the metric is complete at infinity. Each $S_j$ on the other hand is a conformally flat region with finite area and with a point-singularity at some $p_j$, meaning that
\begin{equation}\label{eq.sings}
(S_j\setminus\{p_j\},g)=(B\setminus\{0\},e^{2s_j}\abs{dx}^2)
\end{equation}
for some function $s_j(x)$, where $B$ again denotes the unit ball in $\RR^4$.
\end{defn}

The main result of this article states the following.
\begin{thm}[Chern-Gauss-Bonnet formula for singular non-compact $4$-manifolds]\label{thm.manifold}
Let $(M,g)$ be as in Definition \ref{def.manifold} and assume that $g$ has finite total $Q$ curvature
\begin{equation}\label{eq.118}
\int_{M}\abs{Q_g}\, dV_g<\infty,
\end{equation}
and non-negative scalar curvature at every singular point and at infinity at each end. Then we have
\begin{equation}\label{eq.mainCGBformula}
\chi(M)-\frac{1}{32\pi^2}\int_M \big(\abs{W_g}_g^2+8Q_g\big) dV_g = \sum_{i=1}^k\nu_i-\sum_{j=1}^{\ell}\mu_j,
\end{equation}
where
\begin{equation}
\nu_i :=\lim_{r\to\infty}\, \frac{\big(\int_{\dB_r(0)}e^{3e_i(x)}d\sigma(x)\big)^{4/3}}{4(2\pi^2)^{1/3}\int_{B_r(0)\setminus B}e^{4e_i(x)}dx},\qquad i=1,\ldots,k,
\end{equation}
and
\begin{equation}
\mu_j :=\lim_{r\to0}\, \frac{\big(\int_{\dB_r(0)}e^{3s_j(x)}d\sigma(x)\big)^{4/3}}{4(2\pi^2)^{1/3}\int_{B_r(0)}e^{4s_j(x)}dx}-1,\qquad j=1,\ldots,\ell.
\end{equation}
\end{thm}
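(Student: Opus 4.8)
The strategy is to combine Chern's Gauss--Bonnet formula for manifolds with boundary with the analysis underlying Theorem \ref{thm.R4}, applied to an exhaustion of $M$. Recall that for a compact oriented Riemannian $4$--manifold $(\Omega,g)$ with smooth boundary, Chern's transgression formula reads
\begin{equation*}
32\pi^2\chi(\Omega)=\int_\Omega\big(\abs{W_g}_g^2+8Q_g\big)\,dV_g+\int_{\partial\Omega}\mathcal{T}_g,
\end{equation*}
where $\mathcal{T}_g$ is a boundary integrand given by a universal polynomial in the second fundamental form of $\partial\Omega$ and the curvature of $g$, so in particular it depends only on a finite jet of $g$ along $\partial\Omega$; see \cite{Ch43,CQY1,CQY2}. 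I would apply this on the compact domains
\begin{equation*}
M_t:=N\cup\Big(\bigcup_{i=1}^k \big(E_i\cap B_{r(t)}\big)\Big)\cup\Big(\bigcup_{j=1}^{\ell}\big(S_j\setminus B_{\rho(t)}\big)\Big),\qquad r(t)\to\infty,\quad\rho(t)\to 0,
\end{equation*}
whose boundary consists of the truncation spheres $\partial B_{r(t)}$ at the ends and $\partial B_{\rho(t)}$ at the singular regions; these are smooth hypersurfaces, since the conformal factors $e_i$ and $s_j$ are smooth away from $0$ and from $\infty$.

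Two elementary facts take care of the interior contributions. First, each $M_t$ deformation retracts onto $N$, so $\chi(M_t)=\chi(N)$; and since passing from $N$ to $M$ fills in the $\ell$ interior singular points by gluing a ball onto each of the corresponding $\Sph^3$--boundary components of $N$, we have $\chi(M)=\chi(N)+\ell$ in the convention used throughout the paper. Second, because the ends $E_i$ and the singular regions $S_j$ are conformally flat, conformal invariance of the Weyl tensor gives $W_g\equiv 0$ on each of them, so $\abs{W_g}_g^2\,dV_g$ is supported on the compact set $N$; together with the finiteness \eqref{eq.118} of the total $Q$ curvature this implies $\int_{M_t}(\abs{W_g}_g^2+8Q_g)\,dV_g\to\int_M(\abs{W_g}_g^2+8Q_g)\,dV_g$ as $t\to\infty$.

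The heart of the proof is the evaluation of the boundary transgression integrals in the limit. On a conformally flat region $(\RR^4\setminus\{0\},e^{2w}\abs{dx}^2)$ one computes $\mathcal{T}_g$ on a Euclidean sphere $\partial B_\sigma$ explicitly in terms of $w$ and its derivatives, and after integration in terms of the mixed volumes $V_k(\sigma)$ and the isoperimetric ratios $C_{k,\ell}(\sigma)$ of Definition \ref{defn.ratios} (by Theorem \ref{thm.ratios} the precise choice of ratio will not matter). Applying Chern's formula to an annulus $B_r\setminus B_\rho$ inside such a region, whose Euler characteristic vanishes, gives $\int_{\partial B_r}\mathcal{T}_g-\int_{\partial B_\rho}\mathcal{T}_g=-8\int_{B_r\setminus B_\rho}Q_g\,dV_g$ (with standard orientations), so that, the total $Q$ curvature over each end and singular region being finite, the limits $\lim_{r\to\infty}\int_{\partial B_r}\mathcal{T}_g$ and $\lim_{\rho\to 0}\int_{\partial B_\rho}\mathcal{T}_g$ exist and depend only on the end, resp.\ the singular region. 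To evaluate them I would cap off a given end smoothly to obtain a complete conformally flat metric on $\RR^4$, and cap off a given singular region with a flat Euclidean end to obtain a metric on $\RR^4\setminus\{0\}$ with the prescribed singularity and a standard end, and apply Theorem \ref{thm.R4} to each; the non-negative scalar curvature at the end, resp.\ at the singular point, is precisely the hypothesis needed here, and one uses that completeness of a simple conformally flat end with finite total $Q$ curvature forces its volume to diverge, so the isoperimetric ratios $\nu_i$, $\mu_j$ are unaffected by the capping. This identifies
\begin{equation*}
\lim_{r\to\infty}\int_{\partial B_r}\mathcal{T}_g=32\pi^2\nu_i\ \ \text{(at the $i$-th end)},\qquad
\lim_{\rho\to 0}\int_{\partial B_\rho}\mathcal{T}_g=32\pi^2(1+\mu_j)\ \ \text{(at the $j$-th singular region)}.
\end{equation*}
As boundary components of $M_t$, the spheres $\partial B_{r(t)}$ carry the standard outward orientation while the $\partial B_{\rho(t)}$ carry the opposite one.

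Assembling Chern's formula on $M_t$ and letting $t\to\infty$ then gives
\begin{equation*}
32\pi^2\chi(N)=\int_M\big(\abs{W_g}_g^2+8Q_g\big)\,dV_g+32\pi^2\sum_{i=1}^k\nu_i-32\pi^2\sum_{j=1}^{\ell}\big(1+\mu_j\big),
\end{equation*}
and substituting $\chi(N)=\chi(M)-\ell$ cancels the two $32\pi^2\ell$ terms, which is exactly \eqref{eq.mainCGBformula}. I expect the main obstacle to be the third step: obtaining the sharp asymptotics of the conformal factors $e_i$ and $s_j$ near the ends and singular points, showing that the non-negativity of the scalar curvature forces these asymptotics to be of the expected conical type, that the renormalised transgression integrals converge, and that their limits are the stated isoperimetric ratios. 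This is the same analytic core as in the proof of Theorem \ref{thm.R4}; once it is in place, the passage to the general manifold $M$ is purely a matter of bookkeeping with Euler characteristics and with the additivity of $\int Q_g\,dV_g$ and $\int\abs{W_g}_g^2\,dV_g$ over the decomposition of $M$.
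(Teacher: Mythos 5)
Your proposal is correct and arrives at \eqref{eq.mainCGBformula}, but it does take a genuinely different route from the paper. The paper does \emph{not} use an exhaustion of $M$: it applies the boundary Chern--Gauss--Bonnet formula \eqref{eq.CGBboundary} once to the fixed compact piece $N$, cites Chang--Qing--Yang's Theorem~\ref{thm.localend} (= \cite[Thm.~1]{CQY2}) for each end to handle $\int_{\partial E_i}T_g$ together with $\int_{E_i}Q_g$, and then proves a new dual statement, Theorem~\ref{thm.localsing}, for the singular regions. That dual statement is obtained by re-running the whole machinery of Sections~\ref{sect.symm}--\ref{sect.general} in a localised form: a version of generalised normal metrics on the punctured ball (Definition~\ref{defn.locnormal}), localised versions of Lemma~\ref{lemma.1} and of the singularity-removal Theorem~\ref{thm.singremove}, and the boundary operator $P_3$ to see that the symmetrisation preserves $T_g\,e^{3w}$. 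The proof of Theorem~\ref{thm.manifold} is then a one-line gluing: all boundary terms at $\partial E_i$ and $\partial S_j$ appear twice with opposite orientations and cancel.

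Your route instead derives the relevant boundary limits from the \emph{global} Theorem~\ref{thm.R4}, using the annulus argument to establish existence of $\lim_{r\to\infty}\int_{\partial B_r}\mathcal{T}_g$ and $\lim_{\rho\to 0}\int_{\partial B_\rho}\mathcal{T}_g$, and then capping to evaluate them. This is a legitimate and in some ways tidier alternative: it avoids re-proving a localised singularity-removal theorem, and it is self-contained in that it does not need to invoke \cite[Thm.~1]{CQY2}. It also makes the Euler characteristic accounting $\chi(M)=\chi(N)+\ell$ explicit, which the paper leaves implicit. What the paper's route buys is the independently interesting local statements Theorem~\ref{thm.localend} and Theorem~\ref{thm.localsing}, which are stated at a \emph{fixed} boundary $\partial B$ rather than only as limits, and which are used unchanged in the final gluing without any limit-taking.

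Two small points you should tighten. First, when capping you must arrange that the capped metric satisfies the hypotheses of Theorem~\ref{thm.R4} near the new piece; this is easy if you choose the cap to be flat (so $\R_g\equiv 0$ there and the ``singularity'' at the origin is trivially removable with $\mu=0$, resp.\ $\nu=1$ for a flat end), but it should be said. Second, the assertion that ``completeness of a simple conformally flat end with finite total $Q$ curvature forces its volume to diverge'' is not true as stated: the paper's own Lemma~\ref{lemma.limits} treats explicitly the degenerate case in which $\lim_{t\to\infty}v'(t)=0$ and $V_4(e^t)$ stays bounded. Fortunately this does not affect your conclusion — in that case $\nu_i=0$ for both the capped and uncapped metric, since $V_3\to 0$ — but the justification that the isoperimetric limits are unaffected by the cap should go through this case separately rather than by appealing to volume divergence.
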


\begin{rem}
In the case of \emph{smooth} metrics satisfying \eqref{eq.118} and the condition of positive scalar curvature at each end, Theorem \ref{thm.manifold} reduces to the results of Chang-Qing-Yang \cite{CQY1,CQY2}. Moreover, under these assumptions, Wang obtained interesting isoperimetric inequalities, see \cite{W1,W2}. It would be interesting to try to adopt her results to our situation of manifolds with singularities.
\end{rem}

The paper is organised as follows. In the Sections \ref{sect.symm}--\ref{sect.general}, we prove the Theorems \ref{thm.R4} and \ref{thm.ratios} in three steps as follows. First, note that for a conformal metric $g=e^{2w}\abs{dx}^2$ on $\RR^4\setminus\{0\}$, the definition of the Paneitz operator \eqref{eq.P} together with the Paneitz equation \eqref{eq.Paneitz1} imply
\begin{equation}\label{eq.Paneitz2}
\Lap^2 w=2Q_g e^{4w},
\end{equation}

where $\Lap$ denotes the Euclidean Laplacian. In the special case where $w=w(r)$ is a radial function on $\RR^4\setminus\{0\}$, this equation reduces to an ODE. In Section \ref{sect.symm}, by solving explicitly for the non-linearity in this ODE, we prove the two theorems for the special case of rotationally symmetric metrics. Then, we introduce a new notion of generalised normal metrics, namely metrics of the form $g=e^{2w}\abs{dx}^2$ where
\begin{equation}\label{eq.normal}
w(x)=\frac{1}{4\pi^2}\int_{\RR^4}\log\Big(\frac{\abs{y}}{\abs{x-y}}\Big)\, Q(y)\,e^{4w(y)}\,dy+\alpha\log\abs{x}+C.
\end{equation}

In Section \ref{sect.normal}, we prove Theorem \ref{thm.R4} and Theorem \ref{thm.ratios} for this type of metrics by comparing $w$ satisfying \eqref{eq.normal} with the averaged conformal factor 
\begin{equation}\label{eq.averaged}
\bar{w}(r):=\frac{1}{\abs{\dB_r(0)}}\int_{\dB_r(0)}w(x)\,d\sigma(x)
\end{equation}
and reducing this case to the already established rotationally symmetric one. Next, in Section \ref{sect.general}, we show that every metric $g=e^{2w}\abs{dx}^2$ on $\RR^4\setminus\{0\}$ which is complete at infinity and has finite area over the origin and satisfies \eqref{eq.finiteQ} and \eqref{eq.nonnegR} is a generalised normal metric. This finishes the proofs of Theorem \ref{thm.R4} and Theorem \ref{thm.ratios} in the general case. Finally, in Section \ref{sect.local}, we localise Theorem \ref{thm.R4} to metrics as in \eqref{eq.ends} and \eqref{eq.sings} and prove Theorem \ref{thm.manifold} by gluing together all the pieces.


\section{Rotationally symmetric metrics}\label{sect.symm}

In this section, we assume that $g=e^{2w}\abs{dx}^2$ is a conformal metric on $\RR^4\setminus\{0\}$ and $w=w(r)$ is a \emph{radial} function. Using spherical coordinates and writing $\abs{x}=r=e^t$, one obtains $\Lap w=e^{-2t}(\ddt+2\dt)w$, and hence, using also the Paneitz equation \eqref{eq.Paneitz2},
\begin{equation}\label{eq.Paneitz3}
\Lap^2 w=e^{-4t}\Big(\ddt-2\dt\Big)\Big(\ddt+2\dt\Big)w=2Q_ge^{4w}, \qquad -\infty<t<\infty.
\end{equation}

Note that $t=\log r$ satisfies $\Lap^2 t=0$. Therefore, it makes sense to denote $w+t$ by $v$, which solves
\begin{equation}\label{eq.ODE}
v''''-4v''=2Q_ge^{4v}, \qquad -\infty<t<\infty.
\end{equation}

The goal of the first part of this section is to compute the limits of $v'(t)$ for $t\to\pm\infty$. In order to do so, following \cite{CQY1}, we denote $F:=2Q_g e^{4v}$ and construct an explicit solution $f(t)$ of the equation 
\begin{equation}\label{eq.fODE}
f''''-4f''=F.
\end{equation}

It will turn out that we do not need $f(t)$ itself, but only its derivatives. Before explaining the construction of $f(t)$, let us prove a technical lemma.

\begin{lemma}\label{lemma.K}
For $F=2Q_g e^{4v}$ as above, we have
\begin{equation*}
K_1:=\lim_{t\to-\infty}e^{2t}\int_t^{\infty} F(x)e^{-2x}dx=0,\qquad
K_2:=\lim_{t\to\infty}e^{-2t}\int_{-\infty}^t F(x)e^{2x}dx=0,
\end{equation*}
\end{lemma}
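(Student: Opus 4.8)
The plan is to exploit two facts: first, that the function $v=w+t$ solves the ODE \eqref{eq.ODE}, and second, that the completeness/finite-area assumptions together with finite total $Q$ curvature force $Fe^{\pm 2x}$ to be integrable near the relevant ends, so that the claimed limits are of an indeterminate $0\cdot\infty$ or $\infty\cdot 0$ type whose value must be extracted carefully. Concretely, $\int_{-\infty}^{\infty}\abs{F(x)}\,dx=\int_{-\infty}^\infty 2\abs{Q_g}e^{4v}\,dx$; since $dV_g=e^{4w}\,dx=e^{4w}r^3\,dr\,d\sigma$ and $e^{4v}=e^{4w}e^{4t}=e^{4w}r^4$, one has (after integrating over $\Sph^3$) that $\int_{-\infty}^\infty\abs{F(x)}\,dx$ is a fixed multiple of $\int_{\RR^4}\abs{Q_g}\,dV_g<\infty$ by \eqref{eq.finiteQ}. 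So $F\in L^1(\RR)$.

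For $K_1$: I would split $\int_t^\infty F(x)e^{-2x}\,dx=\int_t^{t_0}+\int_{t_0}^\infty$ for a fixed large $t_0$. The tail $\int_{t_0}^\infty F(x)e^{-2x}\,dx$ is a finite constant (bounded by $e^{-2t_0}\norm{F}_{L^1}$), so $e^{2t}$ times it tends to $0$ as $t\to-\infty$. For the main piece, bound $\abs{e^{2t}\int_t^{t_0}F(x)e^{-2x}\,dx}\le\int_t^{t_0}\abs{F(x)}e^{2(t-x)}\,dx\le\int_t^{t_0}\abs{F(x)}\,dx$ since $t\le x$; and $\int_t^{t_0}\abs{F(x)}\,dx\to\int_{-\infty}^{t_0}\abs{F(x)}\,dx$, which need not be small. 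To get the limit to vanish, I would instead use dominated convergence on the integral $\int_{-\infty}^{t_0}\mathbf{1}_{[t,t_0]}(x)\,\abs{F(x)}e^{2(t-x)}\,dx$: the integrand is dominated by $\abs{F(x)}\mathbf{1}_{(-\infty,t_0]}\in L^1$ and, for each fixed $x$, once $t<x$ the factor $e^{2(t-x)}\to 0$ as $t\to-\infty$, so the whole integral tends to $0$. This gives $K_1=0$. The argument for $K_2$ is the mirror image: split at a fixed very negative $t_0$, use $\abs{e^{-2t}\int_{t_0}^t F(x)e^{2x}\,dx}\le\int_{t_0}^t\abs{F(x)}e^{2(x-t)}\,dx$ with $x\le t$, and again apply dominated convergence (integrand $\le\abs{F(x)}\mathbf{1}_{[t_0,\infty)}$, pointwise factor $e^{2(x-t)}\to 0$ as $t\to+\infty$) together with the trivial bound $e^{-2t}\int_{-\infty}^{t_0}\abs{F(x)}e^{2x}\,dx\to 0$.

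The only genuinely nontrivial input is the integrability $F\in L^1(\RR)$, which as noted is exactly \eqref{eq.finiteQ} rewritten in the $t$-variable after integrating out the sphere factor; everything else is a routine dominated-convergence estimate, so I do not expect a real obstacle here. One subtlety worth a line in the writeup: one must check that the integrals defining $K_1,K_2$ converge for each fixed finite $t$ (not just in the limit), but this again follows from $F\in L^1$ and the boundedness of $e^{-2x}$ on $[t,\infty)$ respectively $e^{2x}$ on $(-\infty,t]$.
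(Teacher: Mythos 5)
Your proof is correct, and the overall strategy — reduce to $F\in L^1(\RR)$ via the finite total $Q$ curvature hypothesis, then split the integral at a cut-off and estimate each piece — is the same as the paper's. The one technical difference is in how the ``bulk'' piece is handled: you fix the cut-off $t_0$ and then invoke dominated convergence on $\int_t^{t_0}\abs{F(x)}e^{2(t-x)}\,dx$, while the paper uses a \emph{moving} cut-off $T=t/2$, bounding the near piece by $\int_{-\infty}^{t/2}\abs{F}\,dx$ (a tail of an $L^1$ function, hence $\to 0$) and the far piece by $e^{2(t-T)}\norm{F}_{L^1}=e^{t}\norm{F}_{L^1}\to 0$. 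The paper's moving-threshold trick is a touch more elementary (no DCT needed) and handles both pieces in a single stroke; your fixed-threshold-plus-DCT argument is equally valid and arguably more transparent about which hypothesis does what. Either way the only substantive input is $F\in L^1$, which you correctly identified, and your remark on finiteness of the defining integrals for fixed $t$ is a reasonable point of hygiene.
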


\begin{proof}
First note that 
\begin{equation}
\int_{-\infty}^{\infty} \abs{F(x)}dx =\int_{-\infty}^{\infty}2\abs{Q_g}e^{4v}dx=\frac{2}{\abs{\Sph^3}} \int_{\RR^4}\abs{Q_g}e^{4w}dx=\frac{1}{\pi^2} \int_{\RR^4}\abs{Q_g}dV_g<\infty
\end{equation}
by assumption. We then have for $t<T<\infty$
\begin{align*}
\left| e^{2t}\int_t^{\infty} F(x)e^{-2x}dx \right|&\leq e^{2t}\Big(\int_t^T\abs{F(x)}e^{-2x}dx+\int_T^\infty\abs{F(x)}e^{-2x}dx\Big)\\
&\leq \int_{-\infty}^T \abs{F(x)}dx +e^{2(t-T)} \int_{-\infty}^{\infty}\abs{F(x)}dx.
\end{align*}
Setting $T=t/2$ (for negative $t$) and letting $t$ tend to $-\infty$, we obtain $K_1=0$. Similarly, we have for $-\infty<T<t$
\begin{align*}
\left| e^{-2t}\int_{-\infty}^t F(x)e^{2x}dx \right|&\leq e^{-2t}\Big(\int_{-\infty}^T\abs{F(x)}e^{2x}dx+\int_T^t\abs{F(x)}e^{2x}dx\Big)\\
&\leq e^{2(T-t)}\int_{-\infty}^{\infty} \abs{F(x)}dx + \int_T^{\infty}\abs{F(x)}dx.
\end{align*}
Hence, setting $T=t/2$ (for positive $t$) and letting $t$ tend to $\infty$, we obtain $K_2=0$.
\end{proof}

A consequence of this lemma (which could also be seen more directly) is that 
\begin{equation*}
K_3:=\lim_{t\to-\infty}e^{4t}\int_t^{\infty} F(x)e^{-2x}dx=0,\qquad
\end{equation*}

In order to find $f(t)$, we make the ansatz $f''(t)=C(t)e^{-2t}$. Plugging this into \eqref{eq.fODE} yields
\begin{equation*}
(C'(t)e^{-4t})'=F(t)e^{-2t},
\end{equation*}

which can be solved for $C(t)$ as follows:
\begin{align*}
C(t)&=-\int_{-\infty}^t e^{4x}\bigg(\int_x^{\infty} F(y)e^{-2y}dy\bigg)dx\\
&=-\frac{1}{4}e^{4t}\int_t^{\infty} F(x)e^{-2x}dx -\frac{1}{4}\int_{-\infty}^t F(x)e^{2x}dx,
\end{align*}

where the second line is obtained using integration by parts and $K_3=0$. We thus find
\begin{equation*}
f''(t)=-\frac{1}{4}e^{2t}\int_t^{\infty}F(x)e^{-2x}dx-\frac{1}{4}e^{-2t}\int_{-\infty}^t F(x)e^{2x}dx.
\end{equation*}

By Lemma \ref{lemma.K}, we have
\begin{equation}\label{eq.limitsecondderiv}
\lim_{t\to -\infty}f''(t)=\lim_{t\to\infty}f''(t)=0.
\end{equation}

Another integration by parts, using $K_1=K_2=0$, yields
\begin{align*}
f'(t)&=-\int_{-\infty}^t \frac{1}{4}e^{2x}\bigg(\int_x^{\infty} F(y)e^{-2y}dy\bigg)dx+\int_t^{\infty}\frac{1}{4}e^{-2x}\bigg(\int_{-\infty}^x F(y)e^{2y}dy\bigg)dx\\
&=-\frac{1}{8}e^{2t}\int_t^{\infty} F(x)e^{-2x}dx -\frac{1}{8}\int_{-\infty}^t F(x)dx\\
&\quad +\frac{1}{8}e^{-2t}\int_{-\infty}^t F(x)e^{2x}dx+\frac{1}{8}\int_t^{\infty} F(x)dx.
\end{align*}

Using Lemma \ref{lemma.K} once more, we obtain
\begin{equation}\label{eq.limitfirstderiv}
\lim_{t\to -\infty}f'(t)=\frac{1}{8}\int_{-\infty}^{\infty}F(x)dx,\qquad \lim_{t\to\infty}f'(t)=-\frac{1}{8} \int_{-\infty}^{\infty}F(x)dx.
\end{equation}

We could now obtain an explicit $f(t)$ by a further integration and requiring for instance $f(0)=0$. However, we only need the derivatives of $f(t)$, hence we skip this step. Our $v(t)$ is now of the form
\begin{equation}\label{eq.v}
v(t)=c_0+c_1 t+c_2 e^{-2t}+c_3 e^{2t} + f(t)
\end{equation}

for some constants $c_0$, $c_1$, $c_2$, and $c_3$. We first prove that the assumption of asymptotically non-negative scalar curvature implies that $c_2$ and $c_3$ vanish.

\begin{lemma}\label{lemma.c23}
Assume that $g=e^{2w}\abs{dx}^2$ has non-negative scalar curvature at infinity and at the origin, i.e.
\begin{equation*}
\inf_{\RR^4\setminus B_{r_2}(0)} \R_g(x)\geq 0,\quad \inf_{B_{r_1}(0)} \R_g(x)\geq 0,
\end{equation*}
for some $0<r_1\leq r_2<\infty$. Then $c_2=c_3=0$ in \eqref{eq.v}.
\end{lemma}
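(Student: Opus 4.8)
The plan is to express $\R_g$ in terms of $v=w+t$, turning the scalar curvature hypothesis into a differential inequality, and then read off $c_2$ and $c_3$ directly from the explicit form \eqref{eq.v}, using that $f'$ and $f''$ are bounded.

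First I would recall the conformal transformation law for the scalar curvature in dimension four: for $g=e^{2w}\abs{dx}^2$ on an open subset of $\RR^4$,
\begin{equation*}
\R_g=-6\,e^{-2w}\big(\Lap w+\abs{\nabla w}^2\big),
\end{equation*}
with $\Lap$ and $\nabla$ the Euclidean operators. Since $w=w(r)$ is radial and $r=e^t$, we have $\Lap w=e^{-2t}(w''+2w')$ (as used above) and $\abs{\nabla w}^2=e^{-2t}(w')^2$, hence
\begin{equation*}
\R_g=-6\,e^{-2(w+t)}\big(w''+2w'+(w')^2\big).
\end{equation*}
Thus $\R_g\ge 0$ near infinity and near the origin is equivalent to $w''+2w'+(w')^2\le 0$ for $t\ge T_2:=\log r_2$ and for $t\le T_1:=\log r_1$. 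Substituting $w=v-t$, i.e.\ $w'=v'-1$ and $w''=v''$, a one-line computation gives $w''+2w'+(w')^2=v''+(v')^2-1$, so the hypothesis becomes
\begin{equation*}
v''(t)+\big(v'(t)\big)^2\le 1\qquad\text{for }t\ge T_2\text{ and for }t\le T_1.
\end{equation*}

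Next I would insert $v=c_0+c_1 t+c_2 e^{-2t}+c_3 e^{2t}+f$ from \eqref{eq.v} and differentiate, obtaining $v'=c_1-2c_2 e^{-2t}+2c_3 e^{2t}+f'$ and $v''=4c_2 e^{-2t}+4c_3 e^{2t}+f''$. By \eqref{eq.limitsecondderiv} and \eqref{eq.limitfirstderiv}, $f'$ and $f''$ have finite limits at $\pm\infty$ and are therefore bounded on the relevant ranges. Letting $t\to+\infty$: if $c_3\ne 0$, then $(v')^2$ contains the term $4c_3^2 e^{4t}$, whereas $v''$ together with all the remaining terms of $(v')^2$ is $O(e^{2t})$ (here one also uses $e^{-2t}\to 0$ and the boundedness of $f'$, $f''$); hence $v''+(v')^2\to+\infty$, contradicting the bound, so $c_3=0$. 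Symmetrically, letting $t\to-\infty$: if $c_2\ne 0$ then $(v')^2$ contains $4c_2^2 e^{-4t}$, which dominates $v''$ and everything else (now $e^{2t}\to 0$), so again $v''+(v')^2\to+\infty$, a contradiction; hence $c_2=0$.

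I do not expect a real obstacle here: the argument is a short asymptotic comparison of exponentials. The only points needing a little care are that it is the quadratic term $(v')^2$, not $v''$, that produces the leading divergence — so the sign of $c_2$ (resp.\ $c_3$) is irrelevant since it enters squared — and that the boundedness of $f'$ and $f''$ from \eqref{eq.limitsecondderiv}--\eqref{eq.limitfirstderiv} keeps $f$ from interfering with the dominant exponential.
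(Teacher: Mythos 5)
Your proposal is correct and follows essentially the same route as the paper: both convert the scalar curvature hypothesis into the inequality $v''+(v')^2\le 1$ (the paper's equation \eqref{eq.Rv} is exactly this after rearranging), then observe that the dominant contribution as $t\to\pm\infty$ is the squared exponential $4c_3^2e^{4t}$ (resp.\ $4c_2^2e^{-4t}$) coming from $(v')^2$, using \eqref{eq.limitsecondderiv}--\eqref{eq.limitfirstderiv} to keep $f'$ and $f''$ from interfering. The only cosmetic difference is that the paper derives the transformation law from the conformal Laplacian $\Lap_{g_0}e^{w}$ rather than from $\Lap w+\abs{\nabla w}^2$; the resulting computation is identical.
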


\begin{proof}
The transformation law for scalar curvature under a conformal change $g=e^{2w}g_0$ on an $n$-dimensional manifold is
\begin{equation*}
\R_g = e^{-2w}\Big(\R_{g_0}-\tfrac{4(n-1)}{n-2}e^{-\frac{n-2}{2}w}\Lap_{g_0}e^{\frac{n-2}{2}w}\Big).
\end{equation*}
In our case, where $g_0$ is the Euclidean metric and $n=4$, this
becomes
\begin{equation*}
\tfrac{1}{6}\R_g e^{2w}=-e^{-w}\Lap e^w=-e^{-2t}e^{-w}\Big(2\dt+\ddt\Big)e^w,
\end{equation*}
or equivalently
\begin{equation}\label{eq.Rv}
\tfrac{1}{6}\R_g e^{2v(t)}=-v''(t)-(v'(t))^2+1.
\end{equation}
By our assumption of asymptotically non-negative scalar curvature at infinity and at the origin, the left hand side of \eqref{eq.Rv} becomes non-negative if $t\to\pm\infty$. On the other hand, we know by \eqref{eq.limitsecondderiv} and \eqref{eq.limitfirstderiv} that $f''(t)$ and $f'(t)$ stay bounded as $t\to\pm\infty$, thus the dominating term on the right hand side of \eqref{eq.Rv} is $-4\abs{c_3}^2e^{4t}$ as $t\to\infty$, respectively $-4\abs{c_2}^2e^{-4t}$ as $t\to-\infty$. Hence the right hand side of \eqref{eq.Rv} can only be non-negative if $c_2=c_3=0$.
\end{proof}

\begin{rem}
The proof shows that it is sufficient to assume that the term $\R_g e^{2v}=\abs{x}\R_g e^{2w}$ is bounded from below, rather than assuming the stronger condition that $\R_g\geq 0$ near infinity and the origin. On the other hand, our assumption \eqref{eq.nonnegR} cannot be replaced by the weaker assumption $\R_g\to 0$ as $t=\log\abs{x}\to\pm\infty$. This is for example illustrated by the metric $g=e^{2r^2}\abs{dx}^2$, which is complete at infinity, satisfies $Q_g\equiv 0$ and 
\begin{equation*}
\R_g(r)=-48e^{-2r^2}-24r^2e^{-2r^2}\to 0 \quad (\text{as }r\to\infty),
\end{equation*}
but has $c_3=1$.
\end{rem}

A direct consequence of Lemma \ref{lemma.c23} is that under the assumptions as above, we have 
\begin{equation*}
\lim_{t\to\infty}v'(t)-\lim_{t\to-\infty}v'(t)=-\frac{1}{4} \int_{-\infty}^{\infty}F(x)dx=-\frac{1}{4\pi^2} \int_{\RR^4}Q_g e^{4w}dx.
\end{equation*}

This follows from $c_2=c_3=0$, which implies $v(t)=c_0+c_1 t+f(t)$ and thus $v'(t)=c_1+f'(t)$, and the computations of the limits of $f'(t)$ in \eqref{eq.limitfirstderiv}. We have thus proved the following.

\begin{cor}\label{cor.CGBv}
Under the assumptions as above, we have
\begin{equation*}
\chi(\RR^4)-\frac{1}{4\pi^2}\int_{\RR^4}Q_g e^{4w}\,dx = \nu-\mu,
\end{equation*}
where
\begin{equation*}
\nu :=\lim_{t\to\infty}v'(t),\qquad \mu :=\lim_{t\to-\infty}v'(t)-1.
\end{equation*}
\end{cor}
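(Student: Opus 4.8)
The plan is to combine the two pieces of information already assembled in this section: the structural form of $v(t)$ from Lemma \ref{lemma.c23} and the limits of the derivatives of $f(t)$ from \eqref{eq.limitfirstderiv}. Since the statement to be proved, Corollary \ref{cor.CGBv}, asserts exactly the identity displayed in the paragraph immediately before it, the proof will be little more than a careful bookkeeping of what has been established, together with the elementary topological fact $\chi(\RR^4)=1$.

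First I would invoke Lemma \ref{lemma.c23}, under the standing assumption of non-negative scalar curvature at infinity and at the origin, to conclude $c_2=c_3=0$, so that \eqref{eq.v} collapses to $v(t)=c_0+c_1 t+f(t)$ and hence $v'(t)=c_1+f'(t)$. Next I would take the limits $t\to\pm\infty$ and apply \eqref{eq.limitfirstderiv}, giving
\begin{equation*}
\lim_{t\to\infty}v'(t)=c_1-\tfrac{1}{8}\int_{-\infty}^\infty F(x)\,dx,\qquad \lim_{t\to-\infty}v'(t)=c_1+\tfrac{1}{8}\int_{-\infty}^\infty F(x)\,dx,
\end{equation*}
so that the constant $c_1$ cancels in the difference and
\begin{equation*}
\lim_{t\to\infty}v'(t)-\lim_{t\to-\infty}v'(t)=-\tfrac{1}{4}\int_{-\infty}^\infty F(x)\,dx=-\tfrac{1}{4\pi^2}\int_{\RR^4}Q_g e^{4w}\,dx,
\end{equation*}
where the last equality uses $F=2Q_ge^{4v}$ and the change-of-variables computation $\int_{-\infty}^\infty F(x)\,dx=\tfrac{2}{|\Sph^3|}\int_{\RR^4}Q_ge^{4w}\,dx=\tfrac{1}{\pi^2}\int_{\RR^4}Q_ge^{4w}\,dx$ already carried out in the proof of Lemma \ref{lemma.K} (valid here since the absolute-value integral was shown to be finite).

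Finally I would set $\nu:=\lim_{t\to\infty}v'(t)$ and $\mu:=\lim_{t\to-\infty}v'(t)-1$, so that $\nu-\mu=\lim_{t\to\infty}v'(t)-\lim_{t\to-\infty}v'(t)+1$, and combine this with $\chi(\RR^4)=1$ and the displayed difference formula to obtain
\begin{equation*}
\chi(\RR^4)-\frac{1}{4\pi^2}\int_{\RR^4}Q_g e^{4w}\,dx=1-\tfrac{1}{4\pi^2}\int_{\RR^4}Q_ge^{4w}\,dx=\nu-\mu,
\end{equation*}
which is the claim. There is no real obstacle here: the corollary is essentially a repackaging of Lemma \ref{lemma.c23} and \eqref{eq.limitfirstderiv}, and the only mild subtlety is making sure the two limits $\lim_{t\to\pm\infty}v'(t)$ actually exist as finite numbers — which they do, since $f'(t)$ converges by \eqref{eq.limitfirstderiv} and $v'(t)-f'(t)=c_1$ is constant. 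One should also note (as the author may wish to flag) that these limiting values of $v'$ will later be identified with the isoperimetric ratios $\nu,\mu$ of Theorem \ref{thm.R4}; that identification, however, belongs to the subsequent analysis and is not needed to prove the present statement.
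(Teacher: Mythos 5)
Your proposal is correct and follows essentially the same route as the paper: invoke Lemma \ref{lemma.c23} to kill $c_2,c_3$, so $v'(t)=c_1+f'(t)$, take the limits from \eqref{eq.limitfirstderiv}, convert $\int F$ to the $Q$-curvature integral via $F=2Q_ge^{4v}$, and use $\chi(\RR^4)=1$. The only thing you add beyond the paper's one-paragraph argument is the explicit remark that the limits exist because $v'(t)-f'(t)$ is constant, which is a sensible bit of bookkeeping but not a new idea.
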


To finish the proofs of both Theorem \ref{thm.R4} and Theorem \ref{thm.ratios} in the rotationally symmetric case, we only need to prove \eqref{eq.nu2} and \eqref{eq.nu3}, i.e. we need to relate $v'(t)$ to the isoperimetric volume ratios.

\begin{lemma}\label{lemma.limits}
We have
\begin{align*}
\lim_{t\to\infty}v'(t)=\lim_{t\to\infty}C_{k,\ell}(e^t),\\
\lim_{t\to-\infty}v'(t)=\lim_{t\to-\infty}C_{k,\ell}(e^t),\\
\end{align*}
for any choice of $1\leq k<\ell\leq 4$.
\end{lemma}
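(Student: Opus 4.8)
The plan is to show that each isoperimetric ratio $C_{k,\ell}(e^t)$, which a priori depends on the principal curvatures $H_1, H_2$ of the sphere $\dB_r(0)$ in the metric $g = e^{2w}\abs{dx}^2$, can be computed explicitly in terms of $w(r)$ and $w'(r)$, and hence in terms of $v(t) = w + t$ and $v'(t)$, when $w$ is radial. Since the problem is rotationally symmetric, all computations reduce to pointwise expressions. First I would record that for the radial conformal metric, the induced volume form on $\dB_r(0)$ is $e^{3w}r^3\,d\sigma_{\Sph^3}$, so that $V_3(e^t) = \tfrac14\abs{\Sph^3}\,e^{3w}r^3 = \tfrac{\pi^2}{2}e^{3v}$ (using $r = e^t$ and $\abs{\Sph^3} = 2\pi^2$), and similarly $V_4(e^t) = \tfrac{\abs{\Sph^3}}{4}\int_0^r e^{4w}\rho^3\,d\rho = \tfrac{\pi^2}{2}\int_{-\infty}^t e^{4v}\,ds$ after substituting $\rho = e^s$.

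Next I would compute the principal curvatures. For a metric conformal to Euclidean, the second fundamental form of a Euclidean sphere picks up both the Euclidean contribution $1/r$ (for each of the three principal directions) and a term from the derivative of the conformal factor in the normal direction. A direct computation gives each principal curvature equal to $e^{-w}(\tfrac1r + w')$, so that $H_1 = 3e^{-w}(\tfrac1r + w')$ and $H_2 = 3e^{-2w}(\tfrac1r + w')^2$. Writing $\tfrac1r + w' = e^{-t}(1 + r w') = e^{-t}\,v'(t)$, we get $H_1 e^{3w} = 3e^{2w}e^{-t}v'(t)$ and $H_2 e^{3w} = 3e^{w}e^{-2t}(v')^2$. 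Integrating over $\dB_r(0)$ and using $d\sigma = r^3\,d\sigma_{\Sph^3}$ together with $\abs{\Sph^3} = 2\pi^2$, this yields $V_2(e^t) = \tfrac{\pi^2}{2}e^{3v}\,v'(t)$ and $V_1(e^t) = \tfrac{\pi^2}{2}e^{3v}\,(v'(t))^2$.

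With these four explicit formulas in hand, the ratios collapse: substituting into the definitions \eqref{eq.Cs1} shows
\begin{equation*}
C_{3,4}(e^t) = \frac{(\tfrac{\pi^2}{2})^{4/3}e^{4v}}{(\pi^2/2)^{1/3}\,\tfrac{\pi^2}{2}\int_{-\infty}^t e^{4v}\,ds} = \frac{e^{4v(t)}}{\int_{-\infty}^t e^{4v(s)}\,ds},\qquad
C_{2,3}(e^t) = v'(t),\qquad C_{1,2}(e^t) = v'(t),
\end{equation*}
and then from \eqref{eq.Cs2} every composite ratio $C_{2,4}, C_{1,3}, C_{1,4}$ equals $C_{3,4}^{a}\cdot v'(t)^{1-a}$ for the appropriate exponent $a \in (0,1)$. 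So it remains only to show $\lim_{t\to\pm\infty}C_{3,4}(e^t) = \lim_{t\to\pm\infty}v'(t)$; once this is done, every $C_{k,\ell}(e^t)$ is sandwiched between $v'(t)$ and a power-mean of $v'(t)$ and $C_{3,4}(e^t)$, all of which share the same limit. For this last step I would use that $v(t) = c_0 + c_1 t + f(t)$ with $f', f'' \to 0$ (which we may invoke by Lemma \ref{lemma.c23} and \eqref{eq.limitsecondderiv}--\eqref{eq.limitfirstderiv}), so $v'(t) \to c_1 + \tfrac18\int F = \nu$ as $t\to\infty$ and $\to c_1 - \tfrac18\int F$ as $t\to-\infty$. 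The ratio $C_{3,4}(e^t) = e^{4v(t)}/\int_{-\infty}^t e^{4v}$ is then handled by L'Hôpital in the variable $t$ (at $t\to\infty$, provided $v'\to$ a positive limit so both numerator and denominator diverge; at $t\to-\infty$ the denominator tends to $\int_{-\infty}^{-\infty}$ and one argues directly), giving $\lim C_{3,4}(e^t) = \lim \tfrac{4 v'(t) e^{4v}}{4 e^{4v}} = \lim v'(t)$.

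The main obstacle is the limit of $C_{3,4}$ when $v'$ does not have a strictly positive limit — e.g. at the origin ($t\to-\infty$) where the relevant quantity is $\mu + 1 = \lim v'(t)$, which could in principle be zero or the integral $\int_{-\infty}^t e^{4v}$ might converge or diverge depending on the sign of $c_1$. I would handle this by a case analysis on the sign of $\lim_{t\to\pm\infty} v'(t)$: when the limit is positive, L'Hôpital applies cleanly as above; when it is zero or negative one shows $\int e^{4v}$ converges at that end and argues that $e^{4v(t)} \to 0$ fast enough that the ratio still tends to the (non-positive) limit of $v'$, using the explicit asymptotics $v(t) \sim c_1 t$. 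The positivity hypotheses in Theorem \ref{thm.ratios} are precisely what rule out the genuinely degenerate borderline behaviour, so invoking them where needed closes the argument.
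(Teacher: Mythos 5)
Your strategy is exactly the paper's: rewrite each $V_k$ in terms of $v$ and $v'$, observe that $C_{2,3}$ and $C_{1,2}$ equal $v'(t)$ pointwise, treat $C_{3,4}$ with L'H\^{o}pital, and conclude for the composite ratios via \eqref{eq.Cs2}. However, several of your explicit formulas contain slips that would break the algebra as written. You have $V_4(e^t)=\tfrac{\pi^2}{2}\int_{-\infty}^t e^{4v}$ but the correct constant is $\abs{\Sph^3}=2\pi^2$ (no extra $\tfrac14$ for $V_4$, whose definition carries no prefactor); you have $V_2(e^t)=\tfrac{\pi^2}{2}e^{3v}v'$ and $V_1(e^t)=\tfrac{\pi^2}{2}e^{3v}(v')^2$, but the correct exponents are $e^{2v}$ and $e^{v}$ respectively (each factor $e^{-w}$ in $H_1,H_2$ eats one power of $e^w$, and each $e^{-t}$ cancels one power of $r$). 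With your stated formulas, $C_{2,3}$ does \emph{not} collapse to $v'$ -- it becomes $(\pi^2/2)^{1/3}e^v v'$ -- and your $C_{3,4}(e^t)$ is off by a factor of $4$; the L'H\^{o}pital step you wrote, $\tfrac{4v'e^{4v}}{4e^{4v}}$, silently reintroduces a factor of $4$ that your denominator does not contain. These errors happen to cancel so the final limit is right, but the intermediate identities need to be the correct ones for the algebra to genuinely close.

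Separately, on the degenerate case at $C_{3,4}$: you invoke the positivity hypotheses of Theorem \ref{thm.ratios} to "rule out borderline behaviour," but those hypotheses are not available at the level of Lemma \ref{lemma.limits} and are not what the paper uses here. Instead, the paper handles the two ends using the lemma's standing assumptions: completeness at infinity gives $\lim_{t\to\infty}v'(t)\geq 0$ (so either L'H\^{o}pital applies, or $v'\to 0$ and $V_3, V_4$ both tend to a finite limit with $V_3\to 0$, making the ratio vanish), and finite area over the origin forces $V_4(e^t)\to 0$ and hence $e^{v(t)}\to 0$ and $V_3(e^t)\to 0$ as $t\to-\infty$, after which L'H\^{o}pital applies. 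That is a cleaner resolution than a case split on the sign of $\lim v'$, and it stays entirely within the hypotheses of the lemma.
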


\begin{proof}
Letting $H_1$, $H_2$ denote the first and second symmetric polynomial in the principal curvatures of $\dB_r(0)$ as in \eqref{eq.Vs1}, a short computation yields
\begin{equation*}
H_1=3e^{-w}\Big(\frac{1}{r}+\frac{\partial w}{\partial r}\Big), \qquad H_2=3e^{-2w}\Big(\frac{1}{r}+\frac{\partial w}{\partial r}\Big)^2, 
\end{equation*}
and hence
\begin{equation}\label{eq.Vs2}
V_2(r) =\frac{1}{4}\int_{\dB_r(0)}\Big(\frac{1}{r}+\frac{\partial w}{\partial r}\Big)e^{2w}\, d\sigma(x),\quad 
V_1(r) =\frac{1}{4}\int_{\dB_r(0)}\Big(\frac{1}{r}+\frac{\partial w}{\partial r}\Big)^2e^{w}\, d\sigma(x).
\end{equation}
Substituting $r=e^t$ (which implies $dx=e^{4t} dt d\sigma_{\Sph^3}$ as well as $d\sigma(x)= e^{3t}d\sigma_{\Sph^3}$ on $\dB_r(0)$) and $v=w+t$, we obtain from \eqref{eq.Vs1} and \eqref{eq.Vs2}
\begin{equation}\label{eq.Vs3}
\begin{aligned}
V_4(r)&=V_4(e^t)=\int_{-\infty}^{t} \int_{\Sph^3} e^{4v(s)} d\sigma_{\Sph^3}ds = \abs{\Sph^3}\int_{-\infty}^{t}e^{4v(s)}ds,\\
V_3(r)&=V_3(e^t)=\frac{1}{4}\int_{\Sph^3}e^{3v(t)}d\sigma_{\Sph^3}=\frac{1}{4}\abs{\Sph^3}e^{3v(t)},\\
V_2(r)&=V_2(e^t)=\frac{1}{4}\int_{\Sph^3}v'(t)e^{2v(t)}d\sigma_{\Sph^3}=\frac{1}{4}\abs{\Sph^3}v'(t)e^{2v(t)},\\
V_1(r)&=V_1(e^t)=\frac{1}{4}\int_{\Sph^3}(v'(t))^2e^{v(t)}d\sigma_{\Sph^3}=\frac{1}{4}\abs{\Sph^3}(v'(t))^2e^{v(t)}.
\end{aligned}
\end{equation}
It now follows directly from Definition \ref{defn.ratios} that
\begin{equation}\label{eq.Candv}
C_{2,3}(e^t)=C_{1,2}(e^t)=C_{1,3}(e^t)=v'(t).
\end{equation}
This proves the lemma for all cases where $\ell\neq 4$. We thus turn to study $C_{3,4}(e^t)$. As $g=e^{2w}\abs{dx}^2$ is complete at infinity by assumption, we conclude that $\lim_{t\to\infty}v'(t)\geq 0$. If this limit is strictly positive, then both $V_4(e^t)$ and $V_3(e^t)$ tend to infinity as $t\to\infty$ and we obtain from L'H\^{o}pital's rule
\begin{equation*}
\lim_{t\to\infty}C_{3,4}(e^t) =\lim_{t\to\infty}\frac{\frac{4}{3}(\frac{1}{4}\abs{\Sph^3}e^{3v(t)})^{1/3}\cdot\frac{3}{4}\abs{\Sph^3}v'(t)e^{3v(t)}}{(\pi^2/2)^{1/3}\abs{\Sph^3}e^{4v(t)}}=\lim_{t\to\infty}v'(t).
\end{equation*}
On the other hand, if $\lim_{t\to\infty}v'(t)= 0$, and  $V_4(e^t)$ stays bounded as $t\to\infty$, then $\lim_{t\to\infty}e^{4v(t)}=0$, which implies $\lim_{t\to\infty}e^{3v(t)}=0$ and thus $\lim_{t\to\infty} V_3(e^t)=0$. Hence we have again $\lim_{t\to\infty}C_{3,4}(e^t)=0=\lim_{t\to\infty}v'(t)$.\\

Similarly, if $t\to -\infty$, then $V_4(e^t)$ and $V_3(e^t)$ both approach zero, which can be seen as follows. Since $V_4(r)$ is a finite integral (i.e. $e^{4w(r)}$ is in $L^1$ when integrating over the origin), $V_4(r)$ must tend to zero if $r\to 0$, or equivalently  $V_4(e^t)$ tends to zero as $t\to-\infty$. Then, from formula \eqref{eq.Vs3}, we obtain that $e^{v(t)}$ converges to $0$ as $t\to -\infty$ (as otherwise $V_4(e^t)$ would be infinite). Thus, again by \eqref{eq.Vs3}, we obtain also $V_3(e^t)\to 0$ as $t\to -\infty$. The result then follows also in this case from L'H\^{o}pital's rule as above.\\

We have thus proved Lemma \ref{lemma.limits} for $C_{3,4}(e^t)$ and thus, using \eqref{eq.Cs2} and \eqref{eq.Candv}, for all the remaining cases.
\end{proof}


\section{Generalised Normal Metrics}\label{sect.normal}

In this section, we will define generalised normal metrics on $\RR^4\setminus\{0\}$ as a generalisation of $2$-dimensional complete normal metrics given by Finn \cite{F65}. In \cite{N12} a correspondence between finite area singular points and complete ends was discovered. We use this duality to define a finite area singular point. 

\begin{defn}[Generalised normal metrics]\label{defn.normal}
Suppose that $e^{2w}\abs{dx}^2$ is a metric on $\RR^4 \setminus \{0\} $ with finite total Paneitz $Q$ curvature 
\begin{equation*}
\int_{\RR^4} \abs{Q_g} e^{4w} dx < \infty. 
\end{equation*}   
We call $g$ a \emph{generalised normal metric}, if $w$ has the expansion
\begin{equation}\label{eq.defnormal}
w(x)=\frac{1}{4\pi^2}\int_{\RR^4}\log\Big(\frac{\abs{y}}{\abs{x-y}}\Big)\, Q_g(y)\,e^{4w(y)}\,dy+\alpha\log\abs{x}+C
\end{equation}
for some constants $\alpha,C\in\RR$. For such a generalised normal metric, we then define the \emph{averaged metric} $\bar{g}=e^{2\bar{w}}\abs{dx}^2$ by
\begin{equation}
\bar{w}(r):=\dashint_{\dB_r(0)}w(x)\,d\sigma(x) = \frac{1}{\abs{\dB_r(0)}}\int_{\dB_r(0)}w(x)\,d\sigma(x).
\end{equation}
Clearly, $\bar{g}$ is a rotationally symmetric metric.
\end{defn}

\begin{remark} 
Note that if we have a finite area metric, that is if $\int_{B_R(0)} e^{4w(y)} dy<\infty$, this implies that $\alpha >-1$.  
\end{remark}

The main theorem of this section is the following.

\begin{thm}\label{thm.sec3}
Let $g$ be a generalised normal metric on $\RR^4\setminus\{0\}$ with averaged metric $\bar{g}$ and define the mixed volumes $V_k$ (with respect to $g$) and $\bar{V}_k$ (with respect to $\bar{g}$) as in Definition \ref{defn.ratios}. Then
\begin{align}
V_3(r)&=\bar{V}_3(r)(1+\eps(r)),\label{eq.thm3.V3}\\
\tfrac{d}{dr}V_4(r)&=\tfrac{d}{dr}\bar{V}_4(r)(1+\eps(r)),\label{eq.thm3.V4}
\end{align}
where $\eps(r)\to 0$ if either $r\to 0$ or $r\to\infty$. Moreover, if the two limits
\begin{align*}
\lim_{r\to 0} \left (1 + r \frac{\partial \bar w}{\partial r }\right) \qquad\text{ and }\qquad \lim_{r\to \infty} \left (1 + r \frac{\partial \bar w}{\partial r }\right)
\end{align*}
both exist and are positive, then we have in addition that
\begin{align}
V_1(r)&=\bar{V}_1(r)(1+\eps(r)),\label{eq.thm3.V1}\\
V_2(r)&=\bar{V}_2(r)(1+\eps(r)),\label{eq.thm3.V2}
\end{align}
where again $\eps(r)\to 0$ if $r\to 0$ or $r\to\infty$.
\end{thm}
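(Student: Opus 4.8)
The plan is to transfer everything to the sphere $\Sph^3$ via polar coordinates and to show that the scalar function $\psi_r(\omega):=w(r\omega)-\bar w(r)$ becomes negligible, in an averaged sense, as $r\to 0$ or $r\to\infty$. Writing $\phi(x):=\tfrac{1}{4\pi^2}\int_{\RR^4}\log\tfrac{\abs{y}}{\abs{x-y}}\,Q_g(y)e^{4w(y)}\,dy$ for the potential part of $w$ in \eqref{eq.defnormal}, so that $w(x)=\phi(x)+\alpha\log\abs{x}+C$, one has $\bar w(r)=\bar\phi(r)+\alpha\log r+C$ and hence $\psi_r(\omega)=\phi(r\omega)-\bar\phi(r)$, which has mean zero over $\Sph^3$. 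Passing to polar coordinates in \eqref{eq.Vs1} and using that $\bar w$ is radial (so $\bar V_3(r)=\tfrac14\abs{\Sph^3}r^3e^{3\bar w(r)}$ and $\tfrac{d}{dr}\bar V_4(r)=\abs{\Sph^3}r^3e^{4\bar w(r)}$), the claims \eqref{eq.thm3.V3} and \eqref{eq.thm3.V4} are \emph{equivalent} to
\[
\dashint_{\Sph^3}e^{3\psi_r}\,d\sigma_{\Sph^3}\to 1\qquad\text{and}\qquad \dashint_{\Sph^3}e^{4\psi_r}\,d\sigma_{\Sph^3}\to 1
\]
as $r\to 0$ or $r\to\infty$; in fact we will prove $\dashint_{\Sph^3}e^{K\psi_r}\,d\sigma_{\Sph^3}\to 1$ for \emph{every} fixed $K$, which is exactly what will be needed for \eqref{eq.thm3.V1} and \eqref{eq.thm3.V2}.

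To prove this, fix $\delta\in(0,\tfrac12)$ and split the finite signed measure $d\mu:=Q_g e^{4w}\,dx$ according to whether $\abs{y}<\delta r$, $\delta r\le\abs{y}\le\delta^{-1}r$ or $\abs{y}>\delta^{-1}r$, writing $\phi=\phi_1+\phi_2+\phi_3$ correspondingly. On the inner and outer pieces the kernel $\log\tfrac{\abs{y}}{\abs{r\omega-y}}$ is an $\omega$-independent quantity plus an error of size $O(\delta)$ uniformly in $\omega$, so $\abs{\phi_1(r\omega)-\bar\phi_1(r)}+\abs{\phi_3(r\omega)-\bar\phi_3(r)}\le C\delta\norm{\mu}$ uniformly. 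For the middle piece the substitution $y=rz$ removes the $r$-scaling and gives $\phi_2(r\omega)=-\tfrac{1}{4\pi^2}\int\log\abs{\omega-z}\,d\tilde\mu_r(z)+(\omega\text{-independent})$, where the rescaled measure $\tilde\mu_r$ is supported in $\{\delta\le\abs{z}\le\delta^{-1}\}$ and has total variation $\norm{\tilde\mu_r}=\abs{\mu}(\{\delta r\le\abs{y}\le\delta^{-1}r\})$; since $\abs{\mu}$ is finite and gives no mass to the origin, $\norm{\tilde\mu_r}\to 0$ both as $r\to 0$ and as $r\to\infty$. The heart of the matter is then
\[
\dashint_{\Sph^3}\exp\!\Big({-}\tfrac{K}{4\pi^2}\big(G_r(\omega)-\bar G_r\big)\Big)\,d\sigma_{\Sph^3}(\omega)\to 1,\qquad G_r(\omega):=\int\log\abs{\omega-z}\,d\abs{\tilde\mu_r}(z);
\]
the lower bound is Jensen's inequality, and for the upper bound we bound $\abs{G_r(\omega)-\bar G_r}$ by $\int\abs{\log\abs{\omega-z}}\,d\abs{\tilde\mu_r}(z)$ up to an $o(1)$ term, split $\abs{\log\abs{\omega-z}}$ into a bounded part and $(\log\tfrac{1}{\abs{\omega-z}})_+\le\log\tfrac{1}{\abs{\omega-z}}+O(1)$, apply Jensen's inequality with respect to the probability measure $\abs{\tilde\mu_r}/\norm{\tilde\mu_r}$ together with Fubini, and reduce everything to the elementary fact that $\sup_{\abs{z}\le R}\dashint_{\Sph^3}\abs{\omega-z}^{-s}\,d\sigma_{\Sph^3}(\omega)\to 1$ as $s\to 0^+$ for each fixed $R$ (valid because $\log\abs{\cdot-z}$ is uniformly integrable over $\Sph^3$ for bounded $z$ and $\Sph^3$ is $3$-dimensional). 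The crucial point is that the exponent appearing here is $s=\tfrac{K}{4\pi^2}\norm{\tilde\mu_r}$, which tends to $0$, so that the smallness of the \emph{total mass} of $\tilde\mu_r$ compensates exactly for the (mild) logarithmic singularity, whatever the local profile of $\tilde\mu_r$. Combining the three pieces and letting first $r\to 0$ (or $r\to\infty$) and then $\delta\to 0$ yields \eqref{eq.thm3.V3} and \eqref{eq.thm3.V4}.

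For \eqref{eq.thm3.V1} and \eqref{eq.thm3.V2} we differentiate the representation of $\phi$: after the rescaling $y=rz$,
\[
r\,\partial_r\phi(r\omega)=-\tfrac{1}{4\pi^2}\int\frac{\omega\cdot(\omega-z)}{\abs{\omega-z}^2}\,d\tilde\mu_r(z),\qquad \frac{\omega\cdot(\omega-z)}{\abs{\omega-z}^2}=\tfrac12+\frac{1-\abs{z}^2}{2\abs{\omega-z}^2},
\]
so that $1+r\,\partial_r w(r\omega)=a_r+b_r(\omega)$ with $a_r:=1+r\,\bar w'(r)$ and $\dashint_{\Sph^3}b_r\,d\sigma_{\Sph^3}=0$. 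The decisive observation is that the singular part of the derivative kernel carries the coefficient $1-\abs{z}^2$, which vanishes precisely where the singularity meets $\Sph^3$; splitting again into the three radial regions and, within the middle one, into $\abs{z}$ near and away from $1$, a Minkowski integral inequality then gives $\norm{b_r}_{L^p(\Sph^3)}\to 0$ for every fixed $p\in[1,3)$. Assuming now, as in the hypothesis, that $a_r$ has a positive limit both as $r\to 0$ and as $r\to\infty$, we rewrite \eqref{eq.Vs1}--\eqref{eq.Vs2} and their $\bar g$-analogues as $V_2(r)/\bar V_2(r)=a_r^{-1}\dashint_{\Sph^3}(a_r+b_r)e^{2\psi_r}\,d\sigma_{\Sph^3}$ and $V_1(r)/\bar V_1(r)=a_r^{-2}\dashint_{\Sph^3}(a_r+b_r)^2 e^{\psi_r}\,d\sigma_{\Sph^3}$; expanding the integrands, every error term is controlled by Hölder's inequality as a power of $\norm{b_r}_{L^p(\Sph^3)}$ (taking $p\in(2,3)$ suffices even for the quadratic term $\dashint b_r^2 e^{\psi_r}$) times a norm $\norm{e^{\psi_r}}_{L^q(\Sph^3)}$ that stays bounded by the first part of the proof. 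This gives \eqref{eq.thm3.V1} and \eqref{eq.thm3.V2}.

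The step we expect to be the main obstacle is the middle-region analysis: the rescaled measures $\tilde\mu_r$ have vanishing total mass but could a priori concentrate near the sphere $\{\abs{z}=1\}$, which is exactly where the relevant kernels ($\log\abs{\omega-z}$, and the first-derivative kernel for $V_1,V_2$) are singular. Making the Jensen--Fubini argument robust against such concentration — recognising that only $\norm{\tilde\mu_r}$ enters the final bound, through the vanishing exponent $s=\tfrac{K}{4\pi^2}\norm{\tilde\mu_r}$ respectively through the vanishing coefficient $1-\abs{z}^2$, and not the local behaviour of $\tilde\mu_r$ — is the key point. The remaining care is bookkeeping: running the two limit regimes $r\to 0$ and $r\to\infty$ in parallel, and using the positivity hypothesis to keep $\bar V_1(r),\bar V_2(r)>0$ so that the ratios in \eqref{eq.thm3.V1}, \eqref{eq.thm3.V2} are meaningful and $a_r^{-1},a_r^{-2}$ stay bounded.
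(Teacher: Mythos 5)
Your proposal is correct in substance, and it takes a genuinely different route from the paper's. The paper proves \eqref{eq.thm3.V3} and \eqref{eq.thm3.V4} via Lemma~\ref{lemma.1}, handling $r\to\infty$ by citing \cite[Lemma~3.2]{CQY1} and, for $r\to 0$, running a four-step argument: pointwise $o(1)$ bounds on the inner and outer pieces $w_1^0, w_2$, an averaged $o(1)$ bound on the middle piece $w_3$, and a Finn-style distribution-function estimate on $E_M=\{\abs{w_3}>M\}$ to pass from $\dashint w_3$ to $\dashint e^{kw_3}$. For \eqref{eq.thm3.V1}, \eqref{eq.thm3.V2}, the paper's Lemma~\ref{lemma.2} works out the explicit kernels $K(x,y)$ and $\bar K(x,y)$, establishes the uniform bound \eqref{eq.unifC}, and estimates $\dashint(\partial_r w)^2 - (\partial_r\bar w)^2$ directly; the theorem then follows by H\"older. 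Your argument replaces all of this with a single scaling-to-the-sphere scheme: after rescaling $y=rz$, the middle-annulus measure $\tilde\mu_r$ has total mass tending to zero, and the uniform-in-$z$ fact that $\dashint_{\Sph^3}e^{s\abs{\log\abs{\omega-z}}}d\sigma\to 1$ as $s\to 0^+$ is exactly strong enough, via Jensen (in $z$) plus Fubini, to show $\dashint_{\Sph^3}e^{K\psi_r}\to 1$ for every $K$ without any distribution-function machinery. For the derivative terms, the algebraic identity $\tfrac{\omega\cdot(\omega-z)}{\abs{\omega-z}^2}=\tfrac12+\tfrac{1-\abs{z}^2}{2\abs{\omega-z}^2}$ cleanly separates the radial contribution $a_r$ from a fluctuation $b_r$ whose singular coefficient $1-\abs{z}^2$ vanishes precisely on $\Sph^3$, so Minkowski's integral inequality gives $\norm{b_r}_{L^p(\Sph^3)}\to 0$ for all $p<3$; H\"older then closes the argument. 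Your approach treats $r\to 0$ and $r\to\infty$ symmetrically, gives the sharper statement "$\dashint e^{K\psi_r}\to 1$ for every fixed $K$" with a one-line Jensen lower bound, and makes transparent the mechanism (small total mass beating a mild logarithmic singularity, and a vanishing coefficient beating a quadratic one) that the paper's piecemeal estimates encode less explicitly. The paper's Lemma~\ref{lemma.2}, on the other hand, gives the explicit closed forms of $\bar K$ and of \eqref{eqn_integral}, which are reused later in Lemma~\ref{lemma.lap} and in Section~\ref{sect.general}, so the authors get more mileage out of the computation even though it is heavier for the theorem at hand. One bookkeeping point worth making explicit in a full write-up: the inner and outer pieces contribute $O(\delta)$ to $\psi_r$ and to $b_r$ uniformly in $r$, and the middle piece vanishes as $r\to 0$ or $r\to\infty$ for fixed $\delta$, so the conclusion needs the double limit $r\to 0,\infty$ then $\delta\to 0$; Jensen's lower bound $\dashint e^{K\psi_r}\geq 1$ (valid since $\dashint\psi_r=0$) makes the $\liminf$ side free, but the $\limsup$ side requires this ordering.
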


In order to prove this theorem, we start with the following technical lemma.

\begin{lemma}\label{lemma.1}
Suppose that the metric $e^{2w}\abs{dx}^2 $ on $\RR^4 \setminus\{0\}$ is a generalised normal metric. Then for any number $k>0$ we have that 
\begin{align}\label{eq.lemma1}
\dashint_{\dB_r(0)} e^{kw} d\sigma(x) = e^{k\bar w(r)} e^{o(1)}
\end{align}
where $o(1)\to 0$ as $r\to 0$ and as $r\to\infty$.
\end{lemma}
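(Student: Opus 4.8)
The plan is to show that $w(x) - \bar w(r)$ is small in a strong enough integral sense on $\dB_r(0)$, uniformly as $r \to 0$ and $r \to \infty$, and then deduce \eqref{eq.lemma1} by combining Jensen's inequality (which gives one direction for free) with an $L^1$-type control of the oscillation (which gives the other). Writing the Jensen inequality for the convex function $s \mapsto e^{ks}$, one direction is immediate:
\begin{equation*}
\dashint_{\dB_r(0)} e^{kw}\,d\sigma \;\geq\; \exp\Big(k\dashint_{\dB_r(0)} w\,d\sigma\Big) = e^{k\bar w(r)}.
\end{equation*}
So the content is the reverse bound, i.e. that $\dashint_{\dB_r} e^{k(w - \bar w)}\,d\sigma \to 1$. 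Using the potential representation \eqref{eq.defnormal}, the terms $\alpha\log|x| + C$ are radial and cancel in $w - \bar w$, so it suffices to control the oscillation on $\dB_r$ of the logarithmic potential $u(x) := \frac{1}{4\pi^2}\int_{\RR^4}\log\frac{|y|}{|x-y|}\,Q_g(y)e^{4w(y)}\,dy$. The key estimate to establish is
\begin{equation*}
\dashint_{\dB_r(0)} \big| u(x) - \bar u(r)\big|\,d\sigma(x) \;\longrightarrow\; 0 \qquad\text{as } r\to 0 \text{ and as } r\to\infty,
\end{equation*}
where $\bar u(r)$ is the spherical average of $u$; in fact one wants this for the (rescaled) oscillation after the substitution $|x| = e^t$, so that the measure $d\sigma_{\Sph^3}$ is the fixed probability measure on $\Sph^3$.

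The main step is therefore a potential-theoretic oscillation estimate. I would split the mass $Q_g e^{4w}$ of the (finite, by hypothesis) measure $d\mu := Q_g(y)e^{4w(y)}\,dy$ into the part living on $B_{\delta r}(0)$, the part on an intermediate annulus $B_{r/\delta}(0)\setminus B_{\delta r}(0)$, and the part on $\RR^4\setminus B_{r/\delta}(0)$. For $y$ far from the sphere $\dB_r$ (the first and third pieces), the function $x\mapsto \log\frac{|y|}{|x-y|}$ is essentially constant on $\dB_r$ up to an error controlled by $\delta$, so those contributions to the oscillation are bounded by (const)$\cdot|\mu|(\RR^4)\cdot\delta$ plus a tail $|\mu|(B_{\delta r}) + |\mu|(\RR^4\setminus B_{r/\delta})$ which tends to $0$ as $r\to 0$ or $r\to\infty$ once $\delta$ is fixed. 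The middle piece has small total mass $|\mu|(B_{r/\delta}\setminus B_{\delta r}) \to 0$ as $r \to 0,\infty$ for fixed $\delta$, but the kernel is singular there, so one estimates $\dashint_{\dB_r}\big|\log\frac{|y|}{|x-y|}\big|\,d\sigma(x) \leq C\log\frac{1}{\mathrm{dist}}$ type bounds and, crucially, uses that $\int_{\dB_r}\log\frac{1}{|x-y|}\,d\sigma(x)$ is integrable in $y$ with a bound uniform in the scaling — this is where the $3$-dimensional sphere geometry enters and is the one genuinely technical point. Sending first $r\to 0$ (or $\infty$) and then $\delta\to 0$ yields the claimed convergence of the $L^1$-oscillation to zero.

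Once $\dashint_{\dB_r}|w - \bar w|\,d\sigma \to 0$ is known, one upgrades it to the exponential statement. The clean way is a Moser–Trudinger / John–Nirenberg-type inequality for the logarithmic potential of a finite measure: since $d\mu$ has finite total mass, $u$ has bounded mean oscillation on spheres with a constant independent of the scale (again by the split above, the only dangerous contribution being the intermediate annulus, whose mass is small), hence $\dashint_{\dB_r} e^{k|u - \bar u|}\,d\sigma$ is bounded, and combined with $\dashint_{\dB_r}|u - \bar u| \to 0$ and the elementary inequality $e^s \le 1 + s + \tfrac{1}{2}s^2 e^{|s|}$ together with Cauchy–Schwarz, one gets $\dashint_{\dB_r} e^{k(w - \bar w)}\,d\sigma = 1 + o(1)$. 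Alternatively, if the paper's earlier sections (the rotationally symmetric case) have already shown that the relevant moments of $Q_g$ decay, one can argue more directly. In either route, the hard part is the scale-uniform BMO / oscillation bound for the logarithmic potential restricted to spheres; everything else is Jensen, Cauchy–Schwarz, and bookkeeping. Combining the upper and lower bounds and reinstating the radial terms $\alpha\log|x|+C$, which drop out of $w-\bar w$, gives $\dashint_{\dB_r}e^{kw}\,d\sigma = e^{k\bar w(r)}e^{o(1)}$, as claimed.
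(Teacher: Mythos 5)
Your plan matches the paper's proof in all essentials: Jensen gives the lower bound, the radial term $\alpha\log\abs{x}+C$ drops out of $w-\bar w$, and the logarithmic potential is split into far pieces (controlled pointwise by near-constancy of the kernel plus tail smallness of the finite measure $Q_g e^{4w}\,dy$) and a near annular piece (controlled by small mass together with a Finn-style level-set estimate $\abs{E_M}\leq Ce^{-M/o(1)}$ that delivers precisely the scale-uniform exponential integrability you phrase as a John--Nirenberg/BMO bound). The only cosmetic differences are that the paper uses a fixed split at $\abs{x}/2$ and $3\abs{x}/2$ (with a secondary parameter $\eta$ inside each far piece playing the role of your $\delta$) rather than a single $\delta$-parametrized annulus, and carries out the exponential step concretely via the distribution function of $w_3$ rather than invoking an abstract BMO inequality.
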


\begin{proof}
The proof for $r\to\infty$ was essentially covered in \cite[Lemma 3.2]{CQY1}. Note that there the formula \eqref{eq.lemma1} is proved for \emph{normal metrics} which differ from our definition of \emph{generalised} normal metrics by our additional term $\alpha\log\abs{x}$ in \eqref{eq.defnormal}. But this additional term, the fundamental solution of the bi-Laplacian, is rotationally symmetric and thus in equation \eqref{eq.lemma1}, $e^{\alpha \log\abs{x}}$ appears on both sides and hence cancels. For this reason, we only need to prove the lemma for $r\to 0$.\\

Suppose that $e^{2w}\abs{dx}^2$ is a generalised normal metric. As in the last section, we denote $F(y)=2Q_g(y)e^{4w(y)}$ which by assumption is in $L^1$. Then, splitting up $\RR^4$ into three regions, we have
\begin{align*}
w(x) &= \frac{1}{8\pi^2} \int_{B_{\abs{x}/2}(0)} \log \left( \frac{\abs{y}}{\abs{x-y}}\right)F(y) dy\\
&\quad + \frac{1}{8\pi^2} \int_{\RR^4 \setminus B_{3\abs{x}/2}(0)} \log \left( \frac{\abs{y}}{\abs{x-y}}\right)F(y) dy\\
&\quad +\frac{1}{8\pi^2} \int_{B_{3\abs{x}/2}(0)\setminus B_{\abs{x}/2}(0)} \log \left( \frac{\abs{y}}{\abs{x-y}} \right)F(y) dy + \alpha \log\abs{x} + C \\
& = w_1(x) + w_2(x) + w_3(x) + \alpha \log\abs{x} + C.
\end{align*}

As above, $\alpha\log\abs{x}+C$ is rotationally symmetric and thus in equation \eqref{eq.lemma1}, $e^{\alpha \log\abs{x}+C}$ appears on both sides and hence cancels. Therefore, we need only concentrate on $w_1(x)$, $w_2(x)$ and $w_3(x)$. We first consider $w_1(x)$, which we can rewrite as follows
\begin{align*}
w_1(x)&=\frac{1}{8\pi^2} \int_{\abs{y}\leq\abs{x}/2} \log \left( \frac{\abs{y}}{\abs{x}}\right)F(y) dy + \frac{1}{8\pi^2} \int_{\abs{y}\leq\abs{x}/2} \log \left( \frac{\abs{x}}{\abs{x-y}}\right)F(y) dy\\
&=f(\abs{x}) + w_1^0(x).
\end{align*}
As $f(\abs{x})$ is another rotationally symmetric term, we need to study only $w_1^0(x)$. In order to do this, let $\eta<\frac{1}{2}$, and estimate 
\begin{equation*}
\abs{w_1^0(x)} \leq C \left( \int_{\abs{y}\leq \eta\abs{x}} \left| \log \frac{\abs{x}}{\abs{x-y}}\right| \abs{F(y)} dy + \int_{\eta\abs{x}\leq\abs{y}\leq \frac{1}{2} \abs{x}} \left| \log \frac {\abs{x}}{\abs{x-y}}\right| \abs{F(y)} dy\right).
\end{equation*}

For the first integral, note that $\abs{y}\leq\eta\abs{x}$ implies
\begin{equation*}
(1-\eta)\abs{x}\leq \abs{x}-\abs{y}\leq\abs{x-y}\leq\abs{x}+\abs{y}\leq (1+\eta)\abs{x},
\end{equation*}
and hence
\begin{equation*}
\frac{1}{1+\eta}\leq\frac{\abs{x}}{\abs{x-y}}\leq \frac{1}{1-\eta}. 
\end{equation*}
This yields 
\begin{equation*}
\left| \log\frac{\abs{x}}{\abs{x-y}}\right| \leq \max \left\{ \left| \log \frac{1}{1+\eta} \right|, \left| \log \frac{1}{1-\eta}\right| \right\} = \log \frac{1}{1-\eta}.
\end{equation*}
In order to estimate the second integral, we use the bound $\abs{y}\leq\frac{1}{2}\abs{x}$, which by an analogous argument as above yields
\begin{equation*}
\left| \log\frac{\abs{x}}{\abs{x-y}}\right| \leq \log \frac{1}{1-\frac{1}{2}} = \log 2.
\end{equation*}
Combining these estimates and using $\int_{\RR^4}\abs{F(y)}dy < \infty$, we obtain
\begin{align*}
\abs{w_1^0(x)} \leq C \log \frac{1}{1-\eta} + \log 2 \int_{\eta \abs{x} \leq \abs{y} \leq \frac{1}{2}\abs{x}} \abs{F(y)}dy.
\end{align*}
For $\abs{x}\to 0$ and $\eta\to 0$, both terms above tend to zero, using again $\int_{\RR^4}\abs{F(y)}dy < \infty$. This proves that
\begin{equation}\label{eq.proofstep1}
w_1^0(x) = o(1), \qquad \text{as $\abs{x}\to 0$}.
\end{equation} 

As a second step, we estimate $w_2(x)$ by an argument which is dual to what we have just done. For $\eta>\frac{3}{2}$, we write 
\begin{equation*}
\abs{w_2(x)} \leq C \left( \int_{\abs{y}\geq \eta\abs{x}} \left| \log \frac{\abs{y}}{\abs{x-y}}\right| \abs{F(y)} dy + \int_{\eta\abs{x}\geq\abs{y}\geq \frac{3}{2} \abs{x}} \left| \log \frac {\abs{y}}{\abs{x-y}}\right| \abs{F(y)} dy\right).
\end{equation*}

To bound the first term, we notice that the inequality $\abs{y}\geq \eta\abs{x}$ implies
\begin{equation*}
(1-\tfrac{1}{\eta})\abs{y}\leq \abs{y}-\abs{x}\leq\abs{x-y}\leq\abs{y}+\abs{x}\leq (1+\tfrac{1}{\eta})\abs{y},
\end{equation*}
so that
\begin{equation*}
\frac{1}{1+\frac{1}{\eta}}\leq\frac{\abs{y}}{\abs{x-y}}\leq \frac{1}{1-\frac{1}{\eta}}. 
\end{equation*}
This gives 
\begin{equation*}
\left| \log\frac{\abs{y}}{\abs{x-y}}\right| \leq \max \left\{ \left| \log \frac{\eta}{\eta-1} \right|, \left| \log \frac{\eta}{\eta+1}\right| \right\} = \log \frac{\eta}{\eta-1}.
\end{equation*}
Similarly, we estimate the second term, with $\eta$ now replaced by $\frac{3}{2}$, which gives
\begin{equation*}
\left| \log\frac{\abs{y}}{\abs{x-y}}\right| \leq \log \frac{\frac32}{\frac32-1} = \log 3.
\end{equation*}
Combining these estimates and using $\int_{\RR^4}\abs{F(y)}dy < \infty$, we obtain
\begin{align*}
\abs{w_2(x)} \leq C \log \frac{\eta}{\eta-1} + \log 3 \int_{\eta \abs{x} \geq \abs{y} \geq \frac{3}{2}\abs{x}} \abs{F(y)}dy.
\end{align*}
For $\abs{x}\to 0$, we then send $\eta\to\infty$ slow enough such that $\eta\abs{x}\to 0$, in which case both terms above tend to zero, using again $\int_{\RR^4}\abs{F(y)}dy < \infty$. This proves that
\begin{equation}\label{eq.proofstep2}
w_2(x) = o(1), \qquad \text{as $\abs{x}\to 0$}.
\end{equation} 
As a third step, we consider the term 
\begin{align*}
\bar{w}_3(r)&=\dashint_{\dB_r(0)} w_3(x) d\sigma(x)\\
&= \frac{1}{8\pi^2} \int_{\frac{r}{2}\leq \abs{y} \leq \frac{3r}{2}} \left( \dashint_{\dB_r(0)} \log \frac{\abs{y}}{\abs{x -y}} d\sigma(x) \right)F(y) dy\\
&=\frac{1}{8\pi^2} \int_{\frac{r}{2}\leq  \abs{y} \leq \frac{3r}{2}} I(y) F(y) dy
\end{align*}
We claim that the inner integral
\begin{align*}
\abs{I(y)} &=\left| \dashint_{\dB_r(0)} \log \frac{\abs{y}}{\abs{x -y}} d\sigma(x) \right| \\
&\leq \frac{1}{\abs{\dB_r(0)}} \int_{\dB_r(0)\setminus \{x:\abs{x-y} < \frac{1}{3} \abs{y}\}} \left| \log \frac{\abs{y}}{\abs{x-y}} \right| d\sigma(x)\\
&\quad +\frac{1}{\abs{\dB_r(0)}} \int_{\dB_r(0)\cap \{x:\abs{x-y} < \frac{1}{3} \abs{y}\}} \left| \log \frac{\abs{y}}{\abs{x-y}} \right| d\sigma(x)\\
& = I_1(y) + I_2(y)
\end{align*}
is uniformly bounded on the annular region $\frac{r}{2}\leq \abs{y}\leq \frac{3r}{2}$. To estimate $I_1(y)$, note that on the region over which we integrate we have
\begin{equation*}
\frac{1}{3} \abs{y}\leq\abs{x-y}\leq\abs{x}+\abs{y}\leq 3\abs{y},
\end{equation*}
giving $\frac{1}{3}\leq \frac{\abs{y}}{\abs{x-y}}\leq 3$ and hence $I_1(y) \leq \log 3$. For the second integral, we have
\begin{align*}
I_2(y) \leq \frac{1}{\big| \dB_{\frac{r}{\abs{y}}}(0)\big|} \int_{\dB_{\frac{r}{\abs{y}}}(0)\cap \left\{x:\left |x -\frac{y}{\abs{y}} \right| < \frac{1}{3}\right\}} \left| \log \frac{1}{\abs{x-\frac{y}{\abs{y}}}} \right| d \sigma(x).
\end{align*} 
Now, as $\frac{r}{2} \leq \abs{y} \leq \frac{3}{2} r$, we have $\frac 23 \leq \frac {r}{\abs{y}} \leq 2$. Thus, as $ \log \frac{1}{\left| x-\frac{y}{\abs{y}} \right|}$ is integrable, we see that $I_2(y)$ is uniformly bounded as well, and hence $\abs{I(y)}$ is uniformly bounded for $\frac{r}{2}\leq \abs{y}\leq \frac{3r}{2}$ as claimed. The assumption of finite total $Q$ curvature then implies
\begin{equation}\label{eq.proofstep3a}
\dashint _{ \dB_r(0)} w_3 (x) d \sigma(x) = o(1), \quad \text{ as $ r\to 0$}. 
\end{equation}
Using \eqref{eq.proofstep1}, \eqref{eq.proofstep2} and \eqref{eq.proofstep3a} as well as Jensen's inequality, we obtain for $r\to 0$
\begin{equation}\label{eq.proofstep3b}
\begin{aligned}
k\, \dashint _{\dB_r (0)} w(x)d \sigma(x) &= \dashint _{\dB_r (0)} k \big(w(x) - w_3(x) \big) d \sigma(x)+ o(1)\\
&= \log \left( \dashint _{\dB_r(0)}e^{k(w(x) - w_3(x))} d\sigma(x)\right) + o(1).
\end{aligned}
\end{equation}  
Finally as a fourth and last step, we estimate the term 
\begin{align*}
\dashint_{\dB_r(0)} (e^{w_3(x)} -1) d\sigma(x)  = \dashint_{\dB_1(0)} (e^{w_3(rx)} - 1)d \sigma(x).
\end{align*}
Following \cite{F65}, this can be done by estimating $E_M = \{ \sigma \in \Sph ^ 3 : \abs{w_3(r \sigma)} > M \} $. Similar to the above, we have 
\begin{align*}
M \cdot \abs{E_M} &\leq \int_{E_M} \abs{w_3} d\sigma \leq \frac{1}{8\pi^2} \int_{B_{3r/2} (0)\setminus B_{r/2} (0)}
\left( \int _{ E_M} \left| \log \frac {\abs{y}} {\abs{r \sigma - y}} \right| d\sigma \right) \abs{F(y)}d y  \\
 &= \frac{1}{8\pi^2 } \int_{\frac {r}{2} \leq \abs{y} \leq \frac {3r}{2}} J(y) \abs{F(y)} dy.
\end{align*} 
As above, we have to estimate the inner integral
\begin{align*}
J(y) &= \int_{E_M \setminus \left \{\sigma : \abs{r \sigma - y} \leq \frac{\abs{y}}{3} \right\}} \left| \log \frac {\abs{y}}{\abs{r \sigma - y}}  \right| d\sigma + \int _{ E_M \cap\left \{\sigma : \abs{r \sigma - y} \leq \frac{\abs{y}}{3} \right \}}  \left| \log \frac {\abs{y}}{\abs{r \sigma - y}}  \right| d\sigma\\
  & =J_1(y) + J_2(y). 
\end{align*}
Clearly we have the estimate $J_1(y) \leq \log 3 \cdot \abs{E_M}$. We estimate the term $J_2(y)$ as follows. Observe that if we have 
$\abs{r \sigma - y} \leq \frac {\abs{y}}{ 3}$ then 
\begin{align*}
\log \bigg| \frac {\abs{y}}{\abs{r \sigma -y}} \bigg| \leq  \bigg| \log \frac{\abs{y}}{r} \bigg| + \bigg| \log \Big| \sigma - \frac{y}{r} \Big| \bigg| \leq  \log \frac 3 2 + \bigg| \log \Big| \sigma - \frac{y}{r} \Big| \bigg|.
\end{align*} 
We can thus bound $J_2(y)$ by the situation where $E_M$ is a $3$-dimensional disc centred at $\frac{y}{r}$ orthogonal to $y$, in which case we obtain
\begin{align*}
J_2(y) \leq C \abs{E_M} + C \abs{E_M} \log \frac {1}{\abs{E_M}} \leq C \left ( 1 + \log \frac {1}{\abs{E_M}} \right)\abs{E_M}.
\end{align*}
Combining these estimates, we get
\begin{align*}
 M \leq o(1) \left ( 1 + \log \frac{1}{\abs{E_M}} \right),
\end{align*}
where $ o(1) \to 0$ as $ r \to 0$. This implies
\begin{equation*}
\abs{E_M} \leq C e^ {-M / o(1)},
\end{equation*}
and thus
\begin{equation}\label{eq.proofstep4}
\left| \dashint_{\dB_r(0)} (e^{kw_3(x)} -1) d\sigma(x) \right| = \frac {k}{\abs{\dB_ 1(0)}} \int_{-\infty}^{+\infty} (e^{kM} -1) \abs{E_M} dM = o(1),
\end{equation}
as $r \to 0$. Combining the estimates \eqref{eq.proofstep3b} and \eqref{eq.proofstep4}, we have 
\begin{align*}
k\bar{w}(r)= k\; \dashint_{\dB_r(0)} w(x) d \sigma(x) = \log \left(\dashint_{\dB_ r(0)} e^{kw(x)} d \sigma(x)\right) + o(1),
\end{align*}
which is equivalent to \eqref{eq.lemma1}, thus finishing the proof of Lemma \ref{lemma.1}
\end{proof}

We continue with a second technical lemma.

\begin{lemma}\label{lemma.2}
Suppose that $e^{2w}\abs{dx}^2$ is a generalised normal metric on $\RR^4 \setminus \{0\}$. Then, we have that
\begin{align}\label{eq.lemma2a}
\dashint_{ \dB_r (0)} \left ( \frac {\partial w } { \partial r }\right) ^k d \sigma (x) = O \left ( \frac { 1 }{ r ^ k } \right ) , \quad \text{ for $ k = 1,2,3$,}  
\end{align}
and 
\begin{align}\label{eq.lemma2b}
\dashint _{ \dB_r ( 0)} \left( \frac { \partial w }{ \partial r } \right ) ^ 2 d \sigma(x) = \left ( \frac {\partial \bar w} {\partial r } \right)^2\!\!(r) + o \left( \frac {1}{r^2} \right),
\end{align}
for $r \to 0$ as well as for $r\to\infty$.
\end{lemma}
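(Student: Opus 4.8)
The plan is to differentiate the defining integral expansion \eqref{eq.defnormal} of a generalised normal metric in the radial direction and to reduce both assertions to a single size estimate for a fixed kernel on $\Sph^3$. Write $F(y):=2Q_g(y)e^{4w(y)}$, which belongs to $L^1(\RR^4)$ by hypothesis, and $\hat x:=x/\abs{x}$. Differentiating \eqref{eq.defnormal} (the term $\alpha\log\abs{x}+C$ is rotationally symmetric and, multiplied by $r=\abs{x}$, contributes only the constant $\alpha$) gives
\begin{equation*}
r\,\frac{\partial w}{\partial r}(x)=\alpha-\frac{r}{8\pi^2}\int_{\RR^4}\frac{(x-y)\cdot\hat x}{\abs{x-y}^2}\,F(y)\,dy,\qquad x\in\RR^4\setminus\{0\}.
\end{equation*}
Substituting $x=r\xi$, $y=r\zeta$ with $\xi\in\Sph^3$ and setting $\tilde F_r(\zeta):=r^4F(r\zeta)$ (so that $\norm{\tilde F_r}_{L^1}=\norm{F}_{L^1}$), this becomes $r\,\frac{\partial w}{\partial r}(r\xi)=\alpha-\frac{1}{8\pi^2}\int_{\RR^4}K(\xi,\zeta)\,\tilde F_r(\zeta)\,d\zeta$ with the $r$-independent kernel
\begin{equation*}
K(\xi,\zeta):=\frac{(\xi-\zeta)\cdot\xi}{\abs{\xi-\zeta}^2}=\frac{1-\xi\cdot\zeta}{1-2\,\xi\cdot\zeta+\abs{\zeta}^2}.
\end{equation*}
Differentiation under the integral sign is legitimate; the kernel bound below shows $\int_{\RR^4}\abs{F(y)}\abs{x-y}^{-1}dy<\infty$ for a.e.\ $x\in\dB_r(0)$, which suffices since everything we compute is a spherical average.

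The analytic heart of the proof is the estimate
\begin{equation*}
\dashint_{\Sph^3}\abs{K(\xi,\zeta)}^k\,d\sigma(\xi)\leq\frac{C_k}{(1+\abs{\zeta})^k}\qquad\text{for all }\zeta\in\RR^4,\ k\in\{1,2,3\}.
\end{equation*}
I would prove it by distinguishing three regimes. For $\abs{\zeta}\leq\tfrac14$ the denominator of $K$ exceeds $\tfrac12$, so $K(\xi,\zeta)=1+O(\abs{\zeta})$ and $\abs{K}$ is bounded; for $\abs{\zeta}\geq4$ one has $\abs{\xi-\zeta}\geq\tfrac12\abs{\zeta}$, hence $\abs{K(\xi,\zeta)}\leq C\abs{\zeta}^{-1}$. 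The band $\tfrac14\leq\abs{\zeta}\leq4$ is the delicate one: here $\abs{K}$ may be as large as $\abs{\xi-\zeta}^{-1}$, but one must \emph{not} bound $\abs{K}$ by $\abs{\xi-\zeta}^{-1}$, since $\abs{\xi-\zeta}^{-3}$ is not integrable over $\Sph^3$ when $\zeta\in\Sph^3$. Instead, writing $\zeta=(1+\eps)\omega$ with $\omega=\zeta/\abs{\zeta}$ and using a geodesic normal coordinate $\tau\in\RR^3$ on $\Sph^3$ about $\omega$, one has $(\xi-\zeta)\cdot\xi=-\eps+O(\abs{\tau}^2)$ and $\abs{\xi-\zeta}^2\geq c(\eps^2+\abs{\tau}^2)$, so $\abs{K(\xi,\zeta)}\lesssim 1+\abs{\eps}(\eps^2+\abs{\tau}^2)^{-1}$ and therefore
\begin{equation*}
\int_{\Sph^3}\abs{K(\xi,\zeta)}^k\,d\sigma(\xi)\lesssim 1+\abs{\eps}^{\,k}\int_0^{C}\frac{\rho^2\,d\rho}{(\eps^2+\rho^2)^k}=1+\abs{\eps}^{\,3-k}\int_0^{C/\abs{\eps}}\frac{s^2\,ds}{(s^2+1)^k}\lesssim 1,
\end{equation*}
uniformly in $\eps$, since $\int_0^\infty s^2(s^2+1)^{-k}ds<\infty$ for $k=2,3$ while for $k=1$ the inner integral grows only linearly; in fact $K\equiv\tfrac12$ whenever $\abs{\zeta}=1$. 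Granting this, Minkowski's integral inequality applied to $r\,\frac{\partial w}{\partial r}(r\xi)$ yields, for $k\in\{1,2,3\}$,
\begin{equation*}
\Big(\dashint_{\dB_r(0)}\big|r\tfrac{\partial w}{\partial r}\big|^k d\sigma\Big)^{1/k}\leq\abs{\alpha}+\frac{1}{8\pi^2}\int_{\RR^4}\abs{\tilde F_r(\zeta)}\,\frac{C_k^{1/k}}{1+\abs{\zeta}}\,d\zeta\leq\abs{\alpha}+\frac{C_k^{1/k}}{8\pi^2}\norm{F}_{L^1},
\end{equation*}
which is precisely \eqref{eq.lemma2a}.

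For \eqref{eq.lemma2b}, differentiating the spherical average gives $\frac{\partial\bar w}{\partial r}(r)=\dashint_{\dB_r(0)}\frac{\partial w}{\partial r}\,d\sigma$, hence
\begin{equation*}
\dashint_{\dB_r(0)}\Big(\tfrac{\partial w}{\partial r}\Big)^2 d\sigma-\Big(\tfrac{\partial\bar w}{\partial r}\Big)^2=\dashint_{\dB_r(0)}\Big(\tfrac{\partial w}{\partial r}-\tfrac{\partial\bar w}{\partial r}\Big)^2 d\sigma,
\end{equation*}
the variance of $\frac{\partial w}{\partial r}$ over $\dB_r(0)$, and it remains to show this is $o(r^{-2})$. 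Subtracting the mean annihilates the constant $\alpha/r$ and replaces $K(\xi,\zeta)$ by $K(\xi,\zeta)-\dashint_{\Sph^3}K(\cdot,\zeta)$, so after rescaling and Minkowski as before the task reduces to bounding $\frac1r\int_{\RR^4}\abs{F(y)}\,s(y/r)\,dy$, where $s(\zeta)$ denotes the $\Sph^3$-standard deviation of $K(\cdot,\zeta)$. The same case analysis, now also using the smoothness of $K(\cdot,\zeta)$ at $\zeta=0$ (where $K\equiv1$) and at $\abs{\zeta}=1$ (where $K\equiv\tfrac12$), gives $s(\zeta)\leq C\abs{\zeta}$ for $\abs{\zeta}\leq\tfrac14$, $s(\zeta)\leq C$ for $\tfrac14\leq\abs{\zeta}\leq4$, and $s(\zeta)\leq C\abs{\zeta}^{-1}$ for $\abs{\zeta}\geq4$. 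Splitting $\RR^4$ into $\{\abs{y}\leq r/4\}$, $\{r/4\leq\abs{y}\leq4r\}$ and $\{\abs{y}\geq4r\}$ and inserting these bounds,
\begin{equation*}
\Big(\dashint_{\dB_r(0)}\big(\tfrac{\partial w}{\partial r}-\tfrac{\partial\bar w}{\partial r}\big)^2 d\sigma\Big)^{1/2}\leq\frac{C}{8\pi^2 r}\Big(\frac1r\!\int_{\abs{y}\leq r/4}\!\!\abs{y}\abs{F}\,dy+\!\!\int_{r/4\leq\abs{y}\leq4r}\!\!\abs{F}\,dy+r\!\!\int_{\abs{y}\geq4r}\!\!\frac{\abs{F}}{\abs{y}}\,dy\Big).
\end{equation*}
Each of the three bracketed quantities tends to $0$ both as $r\to0$ and as $r\to\infty$: this follows from the absolute continuity of $y\mapsto\int\abs{F}$ (so $\int_{\abs{y}\leq\rho}\abs{F}\to0$ as $\rho\to0$ and $\int_{\abs{y}\geq\rho}\abs{F}\to0$ as $\rho\to\infty$), together with the elementary facts $\int_{\abs{y}\leq\rho}\abs{y}\abs{F}\,dy=o(\rho)$ as $\rho\to\infty$ and $\int_{\rho\leq\abs{y}\leq1}\abs{F}/\abs{y}\,dy=o(1/\rho)$ as $\rho\to0$, each obtained by splitting the region of integration at $\abs{y}=\sqrt\rho$. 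Hence the right-hand side is $o(1/r)$, the variance is $o(r^{-2})$, and \eqref{eq.lemma2b} follows. I expect the main obstacle to be exactly the uniform kernel estimate on the middle band $\tfrac14\leq\abs{\zeta}\leq4$ for the exponent $k=3$, where the naive pointwise bound $\abs{K}\leq\abs{\xi-\zeta}^{-1}$ is borderline insufficient and one must exploit the cancellation in the numerator $(\xi-\zeta)\cdot\xi$ near the unit sphere (with $K$ collapsing to the constant $\tfrac12$ on the diagonal of $\Sph^3\times\Sph^3$); everything else — the Minkowski reductions, the near/middle/far splitting of $\RR^4$, and the $L^1$ tail estimates — is routine bookkeeping.
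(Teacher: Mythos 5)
Your argument is correct and follows the same overall strategy as the paper: both write $\partial_r w$ as a convolution of $F=2Q_ge^{4w}$ against the radial-derivative kernel $K$, reduce \eqref{eq.lemma2a} to a uniform bound on $\dashint_{\dB_r(0)}\abs{K}^k\,d\sigma$, and reduce \eqref{eq.lemma2b} to a variance-of-$\partial_r w$ estimate split over regions of $\RR^4$ according to $\abs{y}/r$. Where you depart from the paper is mainly in organisation and in filling gaps. For the kernel bound in part (a), the paper uses the algebraic identity $2(\abs{x}^2-x\cdot y)=\abs{x-y}^2+\abs{x}^2-\abs{y}^2$ and then distinguishes the cases $\bigl|\,\abs{y}-1\,\bigr|\ge\delta$ and $\bigl|\,\abs{y}-1\,\bigr|<\delta$, in the latter case invoking without proof the ``calculus inequality'' $\int_{\dB_1(0)}\abs{x-y}^{-6}d\sigma\le C\,\bigl|1-\abs{y}\bigr|^{-3}$; your geodesic-coordinate analysis near $\abs{\zeta}=1$ (with the cancellation $K\equiv\tfrac12$ on the sphere and the integral $\int_0^\infty s^2(s^2+1)^{-k}ds$) proves exactly that missing estimate, so it is a tightening rather than a new route. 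For part (b), the paper also uses the variance identity implicitly (via Cauchy--Schwarz on $K-\bar K$), but it computes $\bar K$ explicitly by a Green's-function argument and only writes out the case $r\to0$, referring to \cite{CQY1} for $r\to\infty$; your formulation in terms of the standard deviation $s(\zeta)$ of $K(\cdot,\zeta)$ with the decay bounds $s(\zeta)\lesssim\abs{\zeta}$ for small $\zeta$ and $s(\zeta)\lesssim\abs{\zeta}^{-1}$ for large $\zeta$ gives a single unified argument for both limits without needing the explicit expression for $\bar K$. Both approaches buy the same conclusion; yours is a bit more self-contained and symmetric in the two limits, while the paper's keeps the computation closer to the Green's-function machinery already set up in Lemma \ref{lemma.lap}.
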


\begin{proof}
The case $r\to\infty$ was essentially covered by Chang-Qing-Yang \cite[Lemma 3.4]{CQY1}, where the result is proved for normal metrics whose conformal factor $w(x)$ differs from ours -- as above -- by our additional term $\alpha\log\abs{x}$ in \eqref{eq.defnormal}. However, it is easy to extend their result to our situation, noting that $\alpha\log\abs{x}$ is rotationally symmetric with derivative $\partial_r(\alpha\log\abs{x})=\frac{\alpha}{r}$. We thus have to prove the lemma only for $r\to 0$.\\

First, observe that we have 
\begin{align*}
\frac{\partial \bar w}{\partial r}(r) = \dashint_{\dB_r(0)} \frac{\partial w}{\partial r} d\sigma (x).
\end{align*}
Using our definition of generalised normal metrics, we see that
\begin{align*}
\frac{\partial w}{\partial r}(x) = \int_{\RR^4} -K(x,y) F(y) dy + \frac{\alpha}{r}
\end{align*}
where $K(x,y) =\frac{1}{8\pi^2} \partial_r \log \abs{x - y}$ and $F(y)= 2Q_g(y)e^{4w(y)}$ as before. Recall that $F(y)$ is integrable over $\RR^4$ by our assumption on finite total $Q$ curvature. Computing $K(x,y)$, we get that 
\begin{equation*}
\partial_r \frac{1}{8\pi^2} \log \abs{x-y} = \frac{1}{8\pi^2}\left \langle \frac{\nabla_x \abs{x-y}}{\abs{x -y}}, \frac{x}{\abs{x}} \right\rangle = \frac{1}{8\pi^2} \left \langle \frac{x - y}{\abs{x - y}^2} , \frac{x}{\abs{x}} \right \rangle  =\frac{1}{8\pi^2} \frac{\abs{x}^2 - x \cdot y}{\abs{x}\abs{x - y}^2}.
\end{equation*} 
Then, we have that 
\begin{align*}
\bigg| \, \dashint_{ \dB_r(0)} &\left( \frac{\partial w}{\partial r} \right)^k d \sigma(x)\,\bigg| \\
&\leq C \Bigg( \int_{\RR^4} \bigg(\; \dashint_{\dB_r(0)}\abs{K(x,y)}^k d \sigma(x) \bigg) \abs{F(y)} dy  \Bigg) \Bigg( \int_{\RR^4} \abs{F(y)} dy \Bigg)^{k -1} \!\!+ C \frac {\abs{\alpha}^k}{r^k}.
\end{align*}
Hence, it suffices to prove that for $k=1,2,3$,
\begin{align*}
\dashint_{\dB_r(0)} \abs{K(x,y)}^k d\sigma(x)= O \left (\frac{1}{r^k} \right), \qquad \forall y \in \RR ^ 4.
\end{align*}
In order to verify this estimate, we use the following 
\begin{equation}\label{eq.x2xy}
2(\abs{x}^2 - x \cdot y) = \abs{x-y}^2 + \abs{x}^2 - \abs{y}^2. 
\end{equation}
Then we need only verify that for all $ y \in \RR^4$ 
\begin{equation}\label{eq.unifC}
\dashint_{\dB_r(0)} \frac{\left| \abs{x}^2 -\abs{y}^2 \right|^3 }{\abs{x -y}^6} d\sigma(x) \leq C 
\end{equation}
for some constant independent of $y$. Using the homogeneity of the integrand, we only need to consider this integral for $r=1$. Hence, assuming $\abs{x}=1$, if we suppose for $\delta > 0$ that $\abs{y} \leq 1 - \delta$ or $\abs{y} \geq 1 - \delta$ then $\abs{\abs{x}-\abs{y}} \leq \abs{x-y}$ implies
\begin{align*}
\frac{1}{\abs{x - y}} \leq \frac{1}{\abs{1 - \abs{y}}} 
\end{align*} 
so that 
\begin{equation*}
\frac{\abs{\abs{x}^2 -\abs{y}^2}^3}{\abs{x - y}^6} \leq \frac{\abs{1 - \abs{y}^2}^3}{\abs{1 - \abs{y}}^ 6} \leq \frac{\abs{1 + \abs{y}}^3}{\abs{1 - \abs{y}}^3} \leq C(\delta).
\end{equation*}
This yields \eqref{eq.unifC} for these $y$. Moreover, for $\abs{y} \in (1-\delta,1) \cup (1 , 1+ \delta)$, we have the calculus inequality
\begin{align*}
\int_{\dB_1(0)} \frac{d\sigma(x)}{\abs{x -y}^6} \leq \frac{C}{\abs{1 -\abs{y}}^3},
\end{align*}
which yields \eqref{eq.unifC} also for the remaining cases and hence establishes \eqref{eq.lemma2a}.\\

To prove \eqref{eq.lemma2b}, we note that $ \frac{1}{\abs{x - y}^2}$ is the Green's function of the Laplacian on $\RR^4$, so
\begin{align*}
\Lap \frac{1}{\abs{x - y}^2}= C \delta_y(x).
\end{align*}
If $\abs{y}\leq r$, this implies 
\begin{align*}
\int_{\dB_r(0)} \partial_r \frac{1}{\abs{x -y}^2}d\sigma(x) = \int_{B_r(0)} \Lap \frac{1}{\abs{x -y}^2} dx = \int_{B_r(0)} C \delta_y(x)dx = C.  
\end{align*}
Thus, we see that 
\begin{equation*}
\partial_r \; \dashint_{\dB_r(0)} \frac{1}{\abs{x -y}^2} d\sigma(x) = \dashint_{\dB_r(0)} \partial_r \frac{1}{\abs{x -y}^2} d\sigma(x) = \frac{C}{\abs{\dB_r(0)}} = \frac{C}{2\pi^2 r^3}.
\end{equation*}
Therefore, we must have
\begin{align*}
\dashint_{\dB_r(0)} \frac{1}{\abs{x -y}^2} d\sigma(x)= -\frac{C}{4\pi^2 r^2} + D,
\end{align*}
for some constants $C$ and $D$. As we have $\abs{y}\leq r$, we may let $y = 0$, which implies that
\begin{align*}
\frac{1}{r^2} = -\frac{C}{4\pi^2 r^2} + D,
\end{align*} 
hence $D = 0$ and $-\frac{C}{4\pi^2}= 1$. Similarly, if $\abs{y}\geq r$ we have that
\begin{align*}
\partial_r \; \dashint_{\dB_r(0)} \frac{1}{\abs{x -y}^2}d\sigma(x)= 0
\end{align*}
and hence 
\begin{align*}
\dashint_{\dB_r(0)} \frac{1}{\abs{x -y}^2}d\sigma(x)= C.
\end{align*}
We may then let $\abs{x}\to 0$ to conclude $C = \frac{1}{\abs{y}^2}$. Altogether, we have proved
\begin{equation}\label{eqn_integral}
\dashint_{\dB_r(0)} \frac{1}{\abs{x -y}^2}d\sigma(x) =
\begin{cases}
\frac{1}{r^2}, & \text{ if $\abs{y}\leq r$},\\ 
\frac{1}{\abs{y}^2}, & \text{ if $\abs{y}> r $}.
\end{cases}
\end{equation}
It follows that we can write
\begin{align*}
\frac{\partial \bar w}{\partial r}(r) = \int_{\RR^4} - \bar K(x,y) F(y) dy 
\end{align*}
where \eqref{eq.x2xy} yields 
\begin{equation*}
\bar K(x,y) = \dashint_{\dB_r(0)} \frac{\abs{x}^2 - x \cdot y}{8\pi^2\abs{x}\abs{x -y}^2} d\sigma(x) = \frac{1}{16\pi^2 r} \; \dashint_{\dB_r(0)} 1+ \frac{r^2-\abs{y}^2}{\abs{x-y}^2} d\sigma(x).
\end{equation*}
For $\abs{y}\leq r$, we then obtain from \eqref{eqn_integral} 
\begin{equation*}
\bar K(x,y) = \frac{1}{16\pi^2 r} \Big(1 + \frac{r^2-\abs{y}^2}{r^2}\Big) = \frac{1}{16\pi^2 r}\Big(2-\frac{\abs{y}^2}{r^2}\Big).
\end{equation*}
Moreover, for $\abs{y}>r$, \eqref{eqn_integral} yields
\begin{align*}
\bar K(x,y) = \frac{1}{16\pi^2 r} \Big(1 + \frac{r^2-\abs{y}^2}{\abs{y}^2}\Big) = \frac{1}{16\pi^2 r}\Big(\frac{r^2}{\abs{y}^2}\Big).
\end{align*}
Together, we have
\begin{equation}
\bar K(x,y) = 
\begin{cases} 
\frac{1}{16\pi^2 r} \left(2 - \frac{\abs{y}^2}{r^2} \right), &\text{ if $\abs{y}\leq r$},\\
\frac{r}{16\pi^2 \abs{y}^2}, &\text{ if $\abs{y} > r$}.
\end{cases}
\end{equation}
We can then estimate
\begin{align*}
\dashint_{\dB_ r(0)} &\left| \left( \frac{\partial w}{\partial r} \right)^2 - \left( \frac{\partial \bar w}{\partial r} \right)^2 \right| d\sigma(x) \\ 
& \leq \int_{\RR^4} \abs{F(y)} dy \cdot \int_{\RR^4} \bigg( \; \dashint_{\dB_r} \abs{K(x,y) - \bar K(x,y)}^2 d\sigma(x) \bigg) \abs{F(y)} dy 
\end{align*}
using
\begin{align*}
\abs{K(x,y) - \bar K(x,y)} =  
\begin{cases} 
\frac{1}{16\pi^2\abs{x}} \big| \frac{(\abs{y}^2 - \abs{x}^2)(\abs{x -y}^2 - \abs{x}^2)}{\abs{x - y}^2\abs{x}^2}  \big|, & \text{ if $\abs{y} < \abs{x}$}, \\
\frac{1}{16\pi^2\abs{x}} \big| \frac{(\abs{y}^2 - \abs{x}^2)(\abs{x -y}^2 - \abs{y}^2)}{\abs{x - y}^2\abs{y}^2}  \big|, & \text{ if $\abs{y} > \abs{x}$}.
\end{cases}
\end{align*}
Note first that
\begin{align*}
\dashint_{\dB_r(0)}\abs{K(x,y) - \bar K(x,y)}^2 d\sigma(x) = O \left( \frac{1}{\abs{x}^2} \right) , \quad \text{ as $\abs{x} \to 0$ and $\abs{y} \leq \abs{x}^{1/2}$}.
\end{align*}
Moreover, if $\abs{y} \geq \abs{x}^{1/2}$, which implies that $\abs{y} > \abs{x}$ if $\abs{x} \to 0$, we get that 
\begin{align*}
\abs{K(x,y) - \bar K(x,y)} \leq C \abs{x} \left| \frac{1}{\abs{x -y}^2} - \frac{1}{\abs{x}^2}\right| & \leq \frac{1}{16\pi^2 \abs{x}}\abs{y}^2 \left| \frac{1}{\abs{y}^2} - \frac{1}{\abs{x - y}^2} \right|\\
&\leq \frac{1}{16 \pi^2 \abs{x}} \left| 1- \frac{1}{\big| \frac{x} {\abs{y}} - \frac{y}{\abs{y}} \big|^2} \right|\\
&= o \left(\frac{1}{\abs{x}} \right).
\end{align*}
The last step follows from the fact that $\abs{y} \geq \abs{x}^{1/2}$ implies $\abs{x}^{1/2} \geq \frac{\abs{x}}{\abs{y}}\to 0$ as $\abs{x} \to 0$.
Finally, combining the various estimates, we have that 
\begin{align*}
&\dashint_{\dB_r(0)} \left| \left( \frac{\partial w}{\partial r} \right)^2 - \left( \frac{\partial \bar w}{\partial r} \right)^2 \right| d\sigma(x)\\
& = \int_{\abs{y} \leq \abs{x}^{1/2}} O \left( \frac{1}{\abs{x}^2} \right) \abs{F(y)} dy + \int_{\abs{y} \geq \abs{x}^{1/2}} o \left( \frac{1}{\abs{x}^2} \right) \abs{F(y)} dy = o \left( \frac{1}{\abs{x}^2} \right).\qedhere
\end{align*}
\end{proof}

Using the two technical lemmas, we can now prove Theorem \ref{thm.sec3}.

\begin{proof}[Proof of Theorem \ref{thm.sec3}]
The formulas \eqref{eq.thm3.V3} and \eqref{eq.thm3.V4} follow immediately from Lemma \ref{lemma.1}. It thus remains to prove \eqref{eq.thm3.V1} and \eqref{eq.thm3.V2}. From \eqref{eq.Vs2}, we know that
\begin{equation*}
V_2(r) =\frac{1}{4}\int_{\dB_r(0)}\Big(\frac{1}{r}+\frac{\partial w}{\partial r}\Big)e^{2w}\, d\sigma(x),
\end{equation*}
hence
\begin{align*}
V_2(r) - \bar V_2(r) &= \frac{1}{4} \left( \frac{1}{r} + \frac{\partial \bar w}{\partial r} \right) \int_{\dB_r(0)} (e^{2w} - e^{2\bar w}) d\sigma(x)\\
&\quad + \frac{1}{4} \int_{\dB_r(0)} \left( \frac{\partial w}{\partial r} - \frac{\partial \bar w}{\partial r} \right)(e^{2w} - e^{2\bar w}) d\sigma(x).
\end{align*}
Applying the previous Lemmas \ref{lemma.1} and \ref{lemma.2} we see that
\begin{align*}
V_2 (r) - \bar V_2(r) & =\bar V_2(r) o(1) + \bigg( \int_{\dB_r(0)} \! \Big( \frac{\partial w}{\partial r} - \frac{\partial \bar w}{\partial r} \Big)^2 \! d\sigma(x) \bigg)^{\! 1/2} \! \bigg( \int_{\dB_r(0)} \! (e^{2w} - e^{2\bar w})^2 d\sigma(x) \bigg)^{\! 1/2}\\
& = \bar V_2(r) o(1) + \abs{\dB_r(0)} \cdot \frac{1}{r}\cdot e^{2\bar w} o(1),
\end{align*}
which implies \eqref{eq.thm3.V2} under the assumption that $ \lim_{r\to 0} (1 + r \frac{\partial \bar w}{\partial r}) > 0$ or $ \lim_{r\to \infty} (1 + r \frac{\partial \bar w}{\partial r}) > 0$, respectively. Similarly, using again \eqref{eq.Vs2}, we have that 
\begin{equation*}
V_1(r) =\frac{1}{4}\int_{\dB_r(0)}\Big(\frac{1}{r}+\frac{\partial w}{\partial r}\Big)^2e^{w}\, d\sigma(x),
\end{equation*}
and thus
\begin{align*}
V_1(r) - \bar V_1(r) &= \frac{1}{4} \left(\frac{1}{r} + \frac{\partial \bar w}{\partial r} \right)^2 \int_{\dB_r(0)}(e^w - e^{\bar{w}}) d\sigma(x)\\
&\quad + \frac{1}{2r} \int_{\dB_r(0)} \left(\frac{\partial w}{\partial r} - \frac{\partial \bar w}{\partial r} \right)(e^w - e^{\bar{w}})d\sigma(x)\\
&\quad + \frac{1}{4} \int_{\dB_r(0)} \left( \left( \frac{\partial w}{\partial r} \right)^2 - \left(\frac{\partial \bar w}{\partial r}\right)^2 \right) e^w d\sigma(x).
\end{align*}
Applying the H\"older inequality and the Lemmas \ref{lemma.1} and \ref{lemma.2}, we get that 
\begin{align*}
V_1(r) - \bar V_1(r) = \bar V_1(r)o(1) + \abs{\dB_r(0)} \frac {1}{r^2} e^{\bar{w}} o(1),
\end{align*}
which implies \eqref{eq.thm3.V1} under the assumption that $\lim_{ r\to 0} ( 1 + r \frac{\partial \bar w}{\partial r}) > 0$ or $\lim_{ r\to \infty} ( 1 + r \frac{\partial \bar w}{\partial r}) > 0$, respectively. Hence the Theorem is proved.
\end{proof}

Another direct consequence of Lemma \ref{lemma.1} (with $k = 4$) is the following.
\begin{cor}\label{cor.compfinite}
Suppose that $e^{2w} \abs{dx}^2$ is a complete generalised normal metric on $\RR^4 \setminus\{ 0 \}$ with finite area over the origin. Then its averaged metric $e^{2 \bar w(r)} \abs{dx}^2$ is also a complete metric with finite area over the origin.
\end{cor}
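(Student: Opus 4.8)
The plan is to verify separately that the averaged metric $\bar g=e^{2\bar w}\abs{dx}^2$ is complete at infinity and has finite area over the origin, transferring each property from $g$ with the help of Lemma \ref{lemma.1}. Throughout, ``complete'' is read as ``complete at infinity'' in the sense of Theorem \ref{thm.R4}. Since $\bar g$ is rotationally symmetric, the facts I would use are that its completeness at infinity is equivalent to $\int_1^\infty e^{\bar w(r)}\,dr=+\infty$ (the radial ray $r\mapsto r\theta$, $r\in[1,\infty)$, has length $\int_1^\infty e^{\bar w(r)}\,dr$, and every path escaping to infinity has length at least this, up to the starting radius), and that its having finite area over the origin is equivalent to $\int_{B_1(0)}e^{4\bar w}\,dx<\infty$, which in turn forces $\int_{B_r(0)}e^{4\bar w}\,dx\to 0$ as $r\to 0$ by absolute continuity of the integral.

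The finite-area part is immediate from Jensen's inequality: because $\bar w(r)=\dashint_{\dB_r(0)}w\,d\sigma$, convexity of $t\mapsto e^{4t}$ gives $e^{4\bar w(r)}\le\dashint_{\dB_r(0)}e^{4w}\,d\sigma(x)$, and multiplying by $\abs{\dB_r(0)}$ and integrating over $r\in(0,R)$ yields $\int_{B_R(0)}e^{4\bar w}\,dx\le\int_{B_R(0)}e^{4w}\,dx<\infty$ by the finite-area hypothesis on $g$. (Alternatively one may invoke Lemma \ref{lemma.1} with $k=4$ in the regime $r\to 0$, which provides the lower bound $\dashint_{\dB_r(0)}e^{4w}\,d\sigma(x)\ge\tfrac12 e^{4\bar w(r)}$ for small $r$ and the same conclusion after integration.) Hence $\bar g$ has finite area over the origin.

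For completeness at infinity, the key observation is that if $g$ is complete at infinity then every radial ray $r\mapsto r\theta$ with $\theta\in\Sph^3$, $r\in[1,\infty)$, must have infinite $g$-length $\int_1^\infty e^{w(r\theta)}\,dr$; otherwise the points $n\theta$, $n\in\NN$, would form a $g$-Cauchy sequence in $\RR^4\setminus\{0\}$ with no limit, contradicting completeness. Integrating over $\theta$ and using Tonelli's theorem, $\int_1^\infty\dashint_{\dB_r(0)}e^{w}\,d\sigma(x)\,dr=\dashint_{\Sph^3}\bigl(\int_1^\infty e^{w(r\theta)}\,dr\bigr)\,d\theta=+\infty$. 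Now Lemma \ref{lemma.1} with $k=1$ in the regime $r\to\infty$ supplies some $R\ge 1$ with $e^{\bar w(r)}\ge\tfrac12\dashint_{\dB_r(0)}e^{w}\,d\sigma(x)$ for all $r\ge R$, whence $\int_R^\infty e^{\bar w(r)}\,dr\ge\tfrac12\int_R^\infty\dashint_{\dB_r(0)}e^{w}\,d\sigma(x)\,dr=+\infty$. Therefore $\int_1^\infty e^{\bar w(r)}\,dr=+\infty$ and $\bar g$ is complete at infinity, which completes the argument. (One could instead use \eqref{eq.thm3.V4}, i.e.\ Lemma \ref{lemma.1} with $k=4$ at $r\to\infty$, to transfer infinite total volume between $g$ and $\bar g$; for these normal ends this is equivalent, but identifying completeness at infinity with infinite total volume relies on the asymptotic analysis of Section \ref{sect.general}, so within the present section the radial-ray argument is the self-contained route.)

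Since Lemma \ref{lemma.1} does all the real work, the remaining argument is routine; the only points needing a little care are (i) the reduction of completeness at infinity of $g$ to the divergence of the radial-length integrals $\int_1^\infty e^{w(r\theta)}\,dr$, and (ii) extracting from the $o(1)$ in Lemma \ref{lemma.1} the one-sided comparison in the direction actually needed here, namely $e^{\bar w(r)}\ge\tfrac12\dashint_{\dB_r(0)}e^{w}\,d\sigma$ for large $r$ — note that Jensen's inequality alone gives only the opposite bound, so Lemma \ref{lemma.1} is genuinely indispensable for the completeness statement.
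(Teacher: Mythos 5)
Your proof is correct, and it fills in what the paper states only as a one-line remark (``Another direct consequence of Lemma~\ref{lemma.1} (with $k=4$) is the following''). The finite-area half is exactly as straightforward as you say: Jensen's inequality alone gives $e^{4\bar w(r)}\le\dashint_{\dB_r(0)}e^{4w}\,d\sigma(x)$, and integrating in $r$ transfers finiteness of $V_4$ near the origin; Lemma~\ref{lemma.1} is not even needed here. For completeness at infinity, your observation is the interesting one: the natural and self-contained route is $k=1$, combined with the radial-ray reduction and Tonelli, precisely because Jensen gives the wrong-way inequality and one genuinely needs the lower bound $e^{\bar w(r)}\gtrsim\dashint_{\dB_r(0)}e^{w}\,d\sigma(x)$ that only Lemma~\ref{lemma.1} provides. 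The paper's parenthetical ``$k=4$'' would transfer infinite total volume between $g$ and $\bar g$; as you correctly point out, identifying that with completeness at infinity is not automatic and relies on the asymptotic structure of normal metrics established elsewhere in the paper, whereas your $k=1$ argument closes the gap without that detour. In short: same underlying tool as the paper, but your choice of exponent for the completeness part is the cleaner one, and your spelling out of the radial-ray/Cauchy-sequence reduction is a genuine (if small) improvement on what the paper leaves implicit.
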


Before proving our main theorems for generalised normal metrics, we need one more estimate. We note that in Lemma \ref{lemma.c23}, the condition of non-negative scalar curvature could be replaced with $v''(t)=O(1)$ as $ t\rightarrow \pm \infty$. We prove the corresponding estimate for a generalised normal metric and its symmetrisation.

\begin{lemma}\label{lemma.lap}
Let $e^{2w} \abs{dx}^2$ be a generalised normal metric on $\RR^4 \setminus\{0\}$. Then we have 
\begin{align*}
\Lap \bar w(r) \leq \frac{C}{r^2}
\end{align*}
for some constant $C\in\RR$.
\end{lemma}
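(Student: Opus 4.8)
The plan is to use the fact that the bi-Laplacian of the Newtonian-potential part of $w$ reproduces $F=2Q_g e^{4w}$, hence $\Lap w$ itself is a Newtonian-type potential of $F$, and then to average over spheres. Concretely, write $w=w_0+\alpha\log|x|+C$ where $w_0(x)=\frac{1}{4\pi^2}\int_{\RR^4}\log\big(\tfrac{|y|}{|x-y|}\big)F(y)\,dy$. Since $\log|x|$ is harmonic away from the origin, $\Lap w(x)=\Lap w_0(x)=-\frac{1}{4\pi^2}\int_{\RR^4}\Lap_x\log|x-y|\,F(y)\,dy$, and $\Lap_x\log|x-y|=\tfrac{2}{|x-y|^2}$ (the correct constant for $n=4$, where $\log|x-y|$ is \emph{not} the fundamental solution but its Laplacian is a locally integrable kernel). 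Thus
\begin{equation*}
\Lap w(x)=-\frac{1}{2\pi^2}\int_{\RR^4}\frac{F(y)}{|x-y|^2}\,dy,
\end{equation*}
and averaging over $\dB_r(0)$ commutes with the $y$-integral, so $\Lap\bar w(r)=\dashint_{\dB_r(0)}\Lap w(x)\,d\sigma(x)=-\frac{1}{2\pi^2}\int_{\RR^4}\big(\dashint_{\dB_r(0)}|x-y|^{-2}\,d\sigma(x)\big)F(y)\,dy$.

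The key input is then the explicit spherical average \eqref{eqn_integral} established inside the proof of Lemma \ref{lemma.2}: $\dashint_{\dB_r(0)}|x-y|^{-2}\,d\sigma(x)=r^{-2}$ if $|y|\le r$ and $=|y|^{-2}$ if $|y|>r$. Plugging this in gives
\begin{equation*}
\Lap\bar w(r)=-\frac{1}{2\pi^2}\Bigg(\frac{1}{r^2}\int_{|y|\le r}F(y)\,dy+\int_{|y|>r}\frac{F(y)}{|y|^2}\,dy\Bigg).
\end{equation*}
Now one only has to bound this from above. For the first term, $\big|\frac{1}{r^2}\int_{|y|\le r}F\big|\le \frac{1}{r^2}\|F\|_{L^1(\RR^4)}$, which is $O(1/r^2)$ since $F\in L^1$ by the finite-total-$Q$-curvature hypothesis. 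The second term is the one requiring a moment's thought: $|y|^{-2}$ is not integrable against $|F|$ uniformly near $r$, but we are integrating over $|y|>r$, so $|y|^{-2}<r^{-2}$ there and hence $\big|\int_{|y|>r}\frac{F(y)}{|y|^2}\,dy\big|\le r^{-2}\|F\|_{L^1(\RR^4)}=O(1/r^2)$ as well. Combining, $\Lap\bar w(r)\le \frac{C}{r^2}$ with $C=\frac{1}{\pi^2}\|F\|_{L^1}$ (and in fact $|\Lap\bar w(r)|\le C/r^2$), which is the claim.

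I expect the only genuine subtlety to be getting the constant in $\Lap_x\log|x-y|$ right in dimension four (where $\log$ is a dimensional coincidence in the normal-metric formula and is \emph{not} the Green's function, unlike the $|x-y|^{-2}$ kernel used in Lemma \ref{lemma.2}); once that is fixed, the argument is just the splitting $|y|\le r$ versus $|y|>r$ together with the already-proved averaging identity \eqref{eqn_integral} and $\|F\|_{L^1}<\infty$. One should also note that interchanging $\dashint_{\dB_r}$ with $\int_{\RR^4}dy$ and interchanging $\Lap$ with $\int_{\RR^4}dy$ are justified by Fubini/dominated convergence, using that $\dashint_{\dB_r(0)}|x-y|^{-2}d\sigma(x)$ is bounded (by $r^{-2}$) uniformly in $y$ and $F\in L^1$; this is exactly the type of interchange already performed in the proof of Lemma \ref{lemma.2}.
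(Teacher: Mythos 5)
Your approach is essentially the same as the paper's: compute $\Lap\bar w$ by averaging the normal-metric representation, invoke the explicit spherical average \eqref{eqn_integral}, and use $F\in L^1$. However, there is a genuine (if ultimately harmless) error in the middle of the argument: you assert that ``$\log|x|$ is harmonic away from the origin'' and consequently drop the $\alpha\log|x|$ term from $\Lap w$. This is false in $\RR^4$. Your own computation $\Lap_x\log|x-y|=\tfrac{2}{|x-y|^2}$, specialised to $y=0$, gives $\Lap\log|x|=\tfrac{2}{|x|^2}\neq 0$; the coincidence that $\log$ appears in the four-dimensional normal-metric formula (as the fundamental solution of $\Lap^2$, not of $\Lap$) is exactly the ``dimensional coincidence'' you flag at the end. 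The correct identity therefore carries an extra term,
\begin{equation*}
\Lap\bar w(r)=-\frac{1}{2\pi^2}\int_{\RR^4}\Big(\dashint_{\dB_r(0)}\frac{d\sigma(x)}{|x-y|^2}\Big)Q_g(y)e^{4w(y)}\,dy+\frac{2\alpha}{r^2},
\end{equation*}
which is what the paper uses. Since $\tfrac{2\alpha}{r^2}=O(r^{-2})$, the final bound $\Lap\bar w(r)\le C/r^2$ is unaffected, so the lemma is still proved; but the internal step should be corrected. (There is also a factor-of-$2$ bookkeeping slip between $F$ and $Q_ge^{4w}$ in your displayed $\Lap w$, but this only changes the constant $C$.) Lastly, note that you split the $y$-integral into $|y|\le r$ and $|y|>r$; the paper just observes that $\dashint_{\dB_r}|x-y|^{-2}d\sigma\le r^{-2}$ uniformly in $y$, which is equivalent to your two-case bound but slightly shorter.
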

\begin{proof}
We note that
\begin{align*}
\Lap \bar w(r) = \Lap \dashint_{\dB_r(0)} w(x) d\sigma(x) = \dashint_{\dB_r(0)} \Lap w(x) d \sigma(x).
\end{align*}
Thus as $g$ is a generalised normal metric, we obtain 
\begin{align*}
\Lap \bar w(r) = \frac{1}{4\pi^2} \int_{\RR^4} \left( 2\; \dashint_{\dB_r(0)} \frac{1}{\abs{x - y}^2} d\sigma(x) \right) Q_g(y) e^{4w(y)} dy + \frac{2\alpha}{r^2}.
\end{align*}
By equation \eqref{eqn_integral}, we see that 
\begin{align*}
\Lap \bar w(r) \leq \frac{1}{2\pi^2 r^2} \int_{\RR^4}\abs{Q_g(y)}e^{4w(y)} dy + \frac{2\alpha}{r^2} \leq \frac{C}{r^2},
\end{align*}
using the assumption of finite total $Q$ curvature in the last step. This establishes the lemma.
\end{proof}

Now, we write $\bar w(r) = \bar w(e^t)$ and $v(t) = \bar w(e^t) + t$ as in the previous section. Remember that we verified there that 
\begin{align*}
v''''(t)- 4 v''(t) = 2\; \dashint_{\dB_r(0)} Q_g e^{4v} d\sigma = F(t), \quad -\infty < t < \infty, 
\end{align*}
with 
\begin{align*}
\int_{- \infty}^{\infty} F(t) dt = \frac{2}{\abs{\Sph^3}} \int_{\RR^4} Q_g e^{4w} dx = \frac{1}{\pi^2} \int_{\RR^4} Q_g e^{4w} dx < \infty
\end{align*}
and 
\begin{align*}
\int_{- \infty}^{\infty}\abs{F(t)} dt \leq \frac{1}{\pi^2} \int_{\RR^4} \abs{Q_g}e^{4w} dx < \infty .
\end{align*}

We then get a proof of our main theorems for generalised normal metrics.

\begin{proof}[Proof of Theorem \ref{thm.R4} and \ref{thm.ratios} for generalised normal metrics]
Let $g$ be a generalised normal metric and write its averaged metric $\bar g = e^{2 \bar w(r)} \abs{dx}^2$ in spherical co-ordinates. Then, as we have seen in the previous section,
\begin{equation}\label{eq.319}
\lim_{t\to\infty}v'(t)-\lim_{t\to-\infty}v'(t)=-\frac{1}{4\pi^2} \int_{\RR^4}Q_{\bar{g}} e^{4\bar{w}}dx.
\end{equation}
Note that $\bar g =e^{2 \bar w}\abs{dx}^2$ is conformally flat and $\bar{w}(r) = \dashint_{\dB_r(0)} w(x) d\sigma(x)$, so that 
\begin{equation*}
2 Q_{\bar{g}} e^{4 \bar w}(r) = \Lap^2 \bar w(r) = \dashint_{\dB_r(0)} \Lap^2 w(x) d\sigma(x) =\dashint_{\dB_r(0)} 2Q_g e^{4w} d\sigma(x).
\end{equation*}
This then implies
\begin{equation}\label{eq.320}
\int_{\RR^4} Q_{\bar{g}} e^{4 \bar w} dx = \int_{\RR^4} Q_g e^{4w} dx. 
\end{equation}
Applying the asymptotic estimates of $ V_3 $ and $ V_4 $, we get that 
\begin{equation}\label{eq.321}
\lim_{r \to 0} \frac{\left( \int_{\partial B_r(0)} e^{3w} d\sigma (x) \right)^{4/3}}{\int_{B_r(0)} e^{4w} dx}  = \lim _{r\to 0 } \frac{\abs{\Sph^3}^{4/3} e^{4 \bar w} r^4}{V_4(r)}
\end{equation}
as well as
\begin{equation}\label{eq.322}
\lim_{r \to \infty} \frac{\left( \int_{\partial B_r(0)} e^{3w} d\sigma (x) \right)^{4/3}}{\int_{B_r(0)} e^{4w} dx}  = \lim _{r\to \infty } \frac{\abs{\Sph^3}^{4/3} e^{4 \bar w} r^4}{V_4(r)}.
\end{equation}
Plugging \eqref{eq.320}--\eqref{eq.322} into \eqref{eq.319}, using also Lemma \ref{lemma.limits}, we obtain a proof of Theorem \ref{thm.R4} in the special case of generalised normal metrics. The same reasoning proves Theorem \ref{thm.ratios} for this type of metrics, using Theorem \ref{thm.sec3} with equations \eqref{eq.thm3.V1}, \eqref{eq.thm3.V2}.  
\end{proof}

\section{Singularity Removal Theorem}\label{sect.general}

We show that every metric $g=e^{2w}\abs{dx}^2$ on $\RR^4\setminus\{0\}$ which is complete at infinity and has finite area over the origin and satisfies \eqref{eq.finiteQ} and \eqref{eq.nonnegR} is a generalised normal metric as in Definition \ref{defn.normal}. Therefore, by the previous section, the generalised Chern-Gauss-Bonnet formula (Theorem \ref{thm.R4}) as well as Theorem \ref{thm.ratios} holds for any such metric. 

\begin{theorem}\label{thm.singremove}
Suppose that the metric $(\RR^4 \setminus \{0\}, e^{2w} \abs{dx}^2)$ is a complete finite area metric with finite total $Q$ curvature, $\int_{\RR^4}\abs{Q_g}e^{4w} dx < \infty$, and scalar curvature non-negative at infinity and at the origin. Then it is a generalised normal metric.
\end{theorem}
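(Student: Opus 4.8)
The plan is to show that the conformal factor $w$ of such a metric automatically admits the representation \eqref{eq.defnormal}, i.e. that it agrees, up to a constant and a multiple of $\log|x|$, with the logarithmic potential of $Q_g e^{4w}$. Since $\Lap^2 w = 2 Q_g e^{4w} =: F$ in the distributional sense on $\RR^4\setminus\{0\}$ and $F\in L^1$, I would begin by introducing the Newtonian-type potential
\begin{equation*}
v_0(x) := \frac{1}{4\pi^2}\int_{\RR^4}\log\Big(\frac{|y|}{|x-y|}\Big)\,F(y)\,dy,
\end{equation*}
noting that $\log\big(|y|/|x-y|\big)$ is (a constant multiple of) the fundamental solution of $\Lap^2$ on $\RR^4$, normalised so the integral converges at $y=\infty$ because of the finite total $Q$ curvature assumption. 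A first step is thus to verify that $v_0$ is well-defined, locally integrable, and solves $\Lap^2 v_0 = F$ distributionally on all of $\RR^4$ (treating the origin as a removable issue for the potential itself, since $F$ is integrable near $0$). Then $h := w - v_0$ is biharmonic on $\RR^4\setminus\{0\}$.

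The heart of the argument is therefore a \emph{Liouville-type classification} of the biharmonic function $h$ on the punctured space $\RR^4\setminus\{0\}$, using the growth/integrability constraints inherited from $w$ and $v_0$. At infinity, completeness of $g$ plus the bound on total $Q$ curvature should force $h$ to grow at most like $\log|x|$ (this is exactly the kind of estimate that underlies Lemma~\ref{lemma.1}); near the origin, the finite-area condition $\int_{B_1(0)}e^{4w}\,dx<\infty$ similarly constrains how negative $w$ — and hence $h$ — can be, again at most a logarithmic singularity. A biharmonic function on $\RR^4\setminus\{0\}$ whose spherical averages grow at most logarithmically at both ends must, by expanding in spherical harmonics and using that the radial biharmonic solutions are spanned by $1,\ \log r,\ r^2,\ r^2\log r$ (and their analogues on higher spherical harmonic modes, which all have strictly larger growth), reduce to $h(x) = \alpha\log|x| + C$; the key input that kills the $r^2$ and $r^2\log r$ radial terms is precisely the scalar-curvature hypothesis, exactly as in Lemma~\ref{lemma.c23} — indeed the Remark after that lemma already notes it suffices to have $|x|\R_g e^{2w}$ bounded below near $0$ and $\infty$, which rules out a positive $r^2$-type term at infinity and its counterpart at the origin.

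Concretely I would carry this out in the following order: (1) establish the convergence and regularity of $v_0$ and the splitting of its defining integral into near, far, and intermediate regions — reusing verbatim the three-region decomposition and the elementary $\log$-ratio estimates from the proof of Lemma~\ref{lemma.1} — to get pointwise/averaged bounds $v_0(x) = O(\log|x|)$ as $|x|\to 0$ and as $|x|\to\infty$; (2) conclude $h = w - v_0$ is biharmonic on $\RR^4\setminus\{0\}$; (3) translate the completeness and finite-area hypotheses on $w$, combined with the bounds on $v_0$, into two-sided logarithmic growth bounds on the spherical averages $\bar h(r)$; (4) run the spherical-harmonic/ODE analysis to show $h$ is radial and of the form $c_0 + c_1\log r + c_2 r^{-2} + c_3 r^2 + (\text{higher })$, then invoke the scalar-curvature condition — via the identity \eqref{eq.Rv} applied to the averaged metric, together with Lemma~\ref{lemma.lap} which already gives $\Lap\bar w \le C/r^2$ — to force all coefficients except $c_0$ and $c_1$ to vanish; (5) set $\alpha := c_1$, $C := c_0$ to obtain exactly \eqref{eq.defnormal}, and note $\alpha>-1$ follows from the finite-area remark.

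The main obstacle I expect is step (3)–(4): turning the \emph{geometric} hypotheses (completeness, finite area) into \emph{quantitative} growth bounds on $\bar h$ sharp enough to exclude the $r^2\log r$ and $r^2$ modes without already assuming more than we have. One must be careful that completeness at infinity only gives $\int^\infty e^{\bar w(r)}\,dr = \infty$, which a priori is a weaker statement than a pointwise lower bound on $\bar w$; the standard fix is to combine it with the upper bound $\Lap\bar w\le C/r^2$ from Lemma~\ref{lemma.lap} and the structure of $v_0$ to get the needed two-sided control, and then let the scalar-curvature hypothesis do the final elimination exactly as in Lemma~\ref{lemma.c23} and its Remark. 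A secondary technical point is justifying that $\Lap^2(w - v_0) = 0$ genuinely holds across the origin is \emph{not} claimed — $h$ need only be biharmonic on the punctured space — so the classification must be the punctured-space version, which is why the $r^{-2}$ mode appears and must be excluded by the finite-area (not merely the scalar-curvature) condition.
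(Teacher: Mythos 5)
Your first step matches the paper: define the potential $v$ (you call it $v_0$) and set $h = w - v$, which is biharmonic on $\RR^4\setminus\{0\}$. After that, however, your proposed route and the paper's diverge, and yours has two real gaps.

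First, the spherical-harmonic classification you propose in steps (3)--(4) is not supported by the bounds you would have. A control on the spherical average $\bar h(r)$ only constrains the degree-$0$ Fourier mode of $h$. For each spherical-harmonic degree $k\geq 1$ the radial coefficient lives in a four-dimensional solution space (in dimension $4$: $r^{k},\ r^{k+2},\ r^{-k-2},\ r^{-k}$), and every nonzero element blows up at $0$ or at $\infty$; excluding these requires $L^{2}$-on-spheres or pointwise bounds, which logarithmic control of $\bar h$ alone does not give. You also misstate the radial solution space: in dimension $4$ it is spanned by $1,\ \log r,\ r^{2},\ r^{-2}$, not $1,\ \log r,\ r^{2},\ r^{2}\log r$ (the latter is the dimension-$2$ list), although you later do refer correctly to the $r^{-2}$ mode.

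Second, your proposed source of the growth bound is circular: you want to combine completeness/finite-area with ``the upper bound $\Lap\bar w\le C/r^{2}$ from Lemma~\ref{lemma.lap}'', but that lemma is proved for generalised normal metrics, which is exactly the conclusion you are trying to reach. In the paper the key one-sided estimate comes from elsewhere: using the scalar-curvature transformation law $\Lap w + \abs{\nabla w}^2 = -\tfrac{\R_g}{6}e^{2w}$, the sign hypothesis on $\R_g$, the harmonicity of $\Lap\psi$ together with the mean value property, and the direct potential bound $\abs{\Lap v}=O(1/r^2)$, one gets $\Lap\psi(x_0)\le C/\abs{x_0}^2$ with no appeal to completeness or finite area. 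The paper then applies B\^ocher's theorem to the subharmonic harmonic function $\Lap\bigl(\psi + \tfrac{C}{2}\log\abs{x}\bigr)$ to write it as $\beta\abs{x}^{-2}+b(x)$ with $\beta\le 0$ and $b$ harmonic across the origin, which produces a choice of $\alpha$ making $h = w - v - \alpha\log\abs{x}$ biharmonic on \emph{all} of $\RR^4$. It then runs a Liouville argument (first on the harmonic function $\Lap h$, then on the partials $h_{x_i}$) to conclude $h$ is constant. This B\^ocher-plus-Liouville route avoids any need to classify biharmonic functions on the punctured space and any need for growth bounds on $h$, which is precisely where your sketch is incomplete. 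You should either switch to that strategy, or supply genuine pointwise (not merely averaged) two-sided bounds on $h$ from independent sources before the spherical-harmonic step can be carried out.
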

\begin{proof} 
First, let us denote
\begin{align*}
v(x) = \frac{1}{4\pi^2} \int_{\RR^4} \log\frac{\abs{y}}{\abs{x -y}} Q_g(y) e^{4w(y)} dy,
\end{align*}
and define $\psi(x) = w(x) - v(x)$. We then note that $\Lap^2 \psi =0 $ on $\RR^4  \setminus \{0\}$. In particular, we see that $\Lap\psi$ is harmonic on $\RR^4 \setminus \{0\}$. The transformation formula for the scalar curvature gives 
\begin{align*}
\Lap w + \abs{\nabla w}^2 = -\frac{\R_g}{6} e^{2w},
\end{align*}
where $\R_g$ is the scalar curvature for the metric $g=e^{2w}\abs{dx}^2$. As $\Lap\psi$ is harmonic on $\RR^4 \setminus \{0\}$, by the mean value equality
\begin{align*}
\Lap \psi(x_0) & = \dashint_{\dB_r(x_0)} \Lap \psi(x) d\sigma(x)\\
& = - \; \dashint_{\dB_r(x_0)} \left(\abs{\nabla w}^2 + \frac{\R_g}{6} \right) d\sigma(x) - \dashint_{\dB_r(x_0)} \Lap v(x) d\sigma(x),
\end{align*} 
if $\dB_r(x_0)\subset \RR^4 \setminus\{0\}$. Hence, for sufficiently large and sufficiently small $\abs{x_0}$, we see that $\R_g \geq 0$ and therefore the first term is non-positive. For the second term, by an argument similar to the one from Lemma \ref{lemma.lap}, we have that
\begin{align*}
\left| \int_{\dB_r(x_0)} \Lap v(x) d\sigma(x) \right| &= \frac{1}{2\pi^2} \int_{\RR^4}  \left( \dashint_{\dB_1(0)} \frac{1}{\abs{r \sigma + x_0 - y}^2} d\sigma \right) Q_g(y) e^{4w(y)} dy\\
& \leq \frac{1}{2\pi^2 r^2} \int_{\RR^4} \abs{Q_g(y)} e^{4w(y)} dy.  
\end{align*}
Hence taking $r = \frac{\abs{x_0}}{2}$, we see that 
\begin{align*}
\Lap \psi(x_0) \leq \frac{C}{\abs{x_0}^2} 
\end{align*}
for all $x_0 \in \RR^4 \setminus \{ 0 \} $ and hence 
\begin{align*}
\Lap \left(\psi(x_0) + \frac{C}{2} \log \abs{x_0} \right) \leq 0. 
\end{align*}
This means that $\left(\psi(x) + \frac{C}{2} \log \abs{x} \right)$ is a sub-harmonic and biharmonic function on $\RR^4 \setminus \{0\}$, i.e. $\Lap \left( \psi(x) + \frac{C}{2} \log \abs{x} \right)$ is harmonic and non-positive. Then by B\^ocher's Theorem, see \cite[Thm 3.9]{A01}, it follows that 
\begin{align*}
\Lap \left( \psi(x) + \frac{C}{2} \log \abs{x} \right) = \beta \frac{1}{\abs{x}^2} + b(x)
\end{align*}
where $\beta\leq 0$ and $b(x)$ is a harmonic function on $\RR^4$. Hence, we find that 
\begin{align*}
h(x) = \psi(x) + \frac{C+\beta}{2} \log \abs{x} = w - v - \alpha \log \abs{x}
\end{align*}
is a biharmonic function on $\RR^4$ and hence smooth. We now show that $h$ is in fact constant. As $\Lap h$ is harmonic, we get by the mean value theorem
\begin{equation}\label{eqn_meanvalue}
\begin{aligned}
\Lap h(x_0) &= \dashint_{\dB_r(x_0)} \Lap h(x) d\sigma(x)\\
& = - \;\dashint_{\dB_r(x_0)} \left( \abs{\nabla w}^2 + \frac{\R_g}{6} \right) d\sigma - \dashint_{\dB_r(x_0)} \Lap v(x) d\sigma(x) - \frac{2 \alpha}{r^2}.
\end{aligned}
\end{equation} 
By an argument as above, we obtain 
\begin{align*}
\Lap h(x_0) \leq 0,
\end{align*}
which shows that $\Lap h =C_0$ for some non-positive constant by Liouville's theorem for harmonic functions. Therefore any partial derivative of $h$ is harmonic, $\Lap h_{x_i} = 0$. Using once more the mean value equality, we get
\begin{align*}
\abs{h_{x_i}(x_0)}^2 = \left| \dashint_{\dB_r(x_0)} h_{x_i} d\sigma\right|^2 \leq \frac{1}{\abs{\dB_r(x_0)}^2}\int_{\dB_r(x_0)} \abs{\nabla h}^2 d\sigma.
\end{align*}
We also have the estimate
\begin{align*}
\abs{\nabla h}^2 \leq 4 \abs{\nabla w}^2 + 4 \abs{\nabla v}^2 + \frac{4\alpha^2}{\abs{x}^2} = - 4 C_0 - \frac{2\R_g}{3} e^{2w} + 4 \abs{\nabla v}^2 + \frac{C}{\abs{x}^2}
\end{align*}
and
\begin{align*}
\abs{\nabla v}^2 \leq C \left( \int_{\RR^4} \frac{1}{\abs{x -y}^2} \abs{Q_g} e^{4w} dy \right) \left( \int_{\RR^4} \abs{Q_g} e^{4w} dy \right) \leq C \int_{\RR^4} \frac{1}{\abs{x -y}^2} \abs{Q_g} e^{4w} dy.
\end{align*}
Hence, we can conclude that for each $x_0 \in \RR^4$ 
\begin{align*}
\abs{h_{x_i}}^2 \leq - 4 C_0
\end{align*}
and, again by Liouville's theorem, the derivatives of $h$ are constant which implies that $ \Lap h = C _0= 0$. Finally, this implies that the partial derivatives of $h$ vanish and $h$ is a constant. 
\end{proof}

\section{Localised versions and the manifold case}\label{sect.local}

The goal of this section is to prove our main result, Theorem \ref{thm.manifold}. This is achieved by localising the result from Theorem \ref{thm.R4}, see Theorem \ref{thm.localend} and Theorem \ref{thm.localsing} below.\\

Recall from Chang-Qing \cite{CQ} that on a smooth and compact manifold \emph{with boundary}, the $4$-dimensional Chern-Gauss-Bonnet formula \eqref{eq.CGB} has to be corrected with an additional boundary term. It reads
\begin{equation}\label{eq.CGBboundary}
\chi(M)=\frac{1}{4\pi^2}\int_M \big(\tfrac{1}{8}\abs{W_g}_g^2+Q_g\big) dV_g + \frac{1}{4\pi^2}\int_{\partial M} \big(L_g+T_g)d\sigma_g,
\end{equation}
where analogous to the Weyl term, $L_g \, d\sigma_g$ is a point-wise conformal invariant vanishing on Euclidean space and $T_g$ is the boundary curvature invariant given by
\begin{equation*}
T_g:=-\frac{1}{12}\partial_N \R_g +\frac{1}{6}\R_g H - R_{akbk}A_{ab} + \frac{1}{9}H^3 - \frac{1}{3}\tr A^3 +\frac{1}{3}\tilde{\Lap}H,
\end{equation*}
where $A$ and $H$ denote the second fundamental form and the mean curvature of the boundary, respectively, $\partial_N$ denotes the unit inward normal derivative and $\tilde{\Lap}$ the boundary Laplacian. In \cite{CQY2}, Chang-Qing-Yang proved the following Theorem.

\begin{thm}[Local Chern-Gauss-Bonnet formula for an end, Theorem 1 in \cite{CQY2}]\label{thm.localend}
Suppose $(E,g)=(\RR^4\setminus B,e^{2w}\abs{dx}^2)$ is a complete conformal metric with non-negative scalar curvature at infinity and finite total $Q$ curvature. Then
\begin{equation}\label{eq.localend}
\frac{1}{4\pi^2}\int_{\dB}T_g e^{3w}d\sigma(x) -\frac{1}{4\pi^2}\int_{\RR^4\setminus B}Q_ge^{4w}dx=\lim_{r\to\infty}\, \frac{\vol_g(\dB_r(0))^{4/3}}{4(2\pi^2)^{1/3}\vol_g(B_r(0)\setminus B)}.
\end{equation}
\end{thm}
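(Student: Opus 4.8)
The plan is to deduce the local formula \eqref{eq.localend} from the already‑established global Theorem \ref{thm.R4} by capping off the end with a flat ball. Since $E=\RR^4\setminus B=\{\abs{x}\ge 1\}$ is a smooth manifold with boundary, the conformal factor $w$ extends smoothly a little past $\partial B$, so by a standard cut‑off I construct $\hat w\in C^\infty(\RR^4)$ with $\hat w\equiv w$ on $E$ and $\hat w\equiv 0$ on $\{\abs{x}\le\tfrac12\}$; put $\hat g=e^{2\hat w}\abs{dx}^2$, a smooth metric on all of $\RR^4$. I would then verify that $\hat g$ satisfies the hypotheses of Theorem \ref{thm.R4}: it is complete at infinity (equalling $g$ there), it is flat near the origin so $\R_{\hat g}\ge 0$ there, $\R_{\hat g}=\R_g\ge 0$ near infinity, and $\int_{\RR^4}\abs{Q_{\hat g}}e^{4\hat w}dx=\int_{B}\abs{Q_{\hat g}}e^{4\hat w}dx+\int_E\abs{Q_g}e^{4w}dx<\infty$. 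Because $\hat g$ is Euclidean near $0$, the quantity $\mu$ of \eqref{eq.nu1} corresponding to $\hat g$ is zero, so Theorem \ref{thm.R4} (with $\chi(\RR^4)=1$) gives $1-\frac{1}{4\pi^2}\int_{\RR^4}Q_{\hat g}e^{4\hat w}dx=\hat\nu$, where $\hat\nu:=\lim_{r\to\infty}\frac{\vol_{\hat g}(\dB_r(0))^{4/3}}{4(2\pi^2)^{1/3}\vol_{\hat g}(B_r(0))}$. (One could instead apply \eqref{eq.CGBboundary} directly to the annuli $B_r(0)\setminus B$ and send $r\to\infty$, but then one is left having to evaluate $\lim_{r\to\infty}\int_{\dB_r(0)}T_g e^{3w}d\sigma$ for a non‑radial conformal factor, which essentially re‑does the work of Theorem \ref{thm.R4}; capping off avoids this.)

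It then remains to match the two sides of this identity with the two quantities in \eqref{eq.localend}. For the curvature term I would write $\int_{\RR^4}Q_{\hat g}e^{4\hat w}dx=\int_{B}Q_{\hat g}e^{4\hat w}dx+\int_E Q_g e^{4w}dx$ and apply the boundary Chern–Gauss–Bonnet formula \eqref{eq.CGBboundary} to the closed ball $(\overline B,\hat g)$: conformal flatness kills the Weyl term $W_{\hat g}=0$, and since $\partial B$ is a round coordinate sphere the pointwise conformal invariant $L_{\hat g}\,d\sigma_{\hat g}$ vanishes, so $\chi(\overline B)=1$ forces $\frac{1}{4\pi^2}\int_{B}Q_{\hat g}e^{4\hat w}dx=1-\frac{1}{4\pi^2}\int_{\partial B}T_{\hat g}e^{3\hat w}d\sigma$, and on $\partial B$ we may replace $\hat g$ by $g$. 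Substituting into the output of Theorem \ref{thm.R4} and cancelling the $1$'s yields exactly $\frac{1}{4\pi^2}\int_{\partial B}T_g e^{3w}d\sigma-\frac{1}{4\pi^2}\int_E Q_g e^{4w}dx=\hat\nu$. For the volume term, note that for $r>1$ we have $\vol_{\hat g}(\dB_r(0))=\vol_g(\dB_r(0))$ and $\vol_{\hat g}(B_r(0))=\vol_{\hat g}(B)+\vol_g(B_r(0)\setminus B)$, so the ratio defining $\hat\nu$ and the ratio on the right of \eqref{eq.localend} differ only by the fixed positive constant $\vol_{\hat g}(B)$ added in the denominator; this constant is immaterial in the limit, since either $\vol_g(B_r(0)\setminus B)\to\infty$ (and it washes out) or $\vol_g(B_r(0)\setminus B)$ stays bounded, in which case completeness at infinity forces $\lim_{t\to\infty}v'(t)=0$ and hence $\vol_{\hat g}(\dB_r(0))\to 0$ by the argument in the proof of Lemma \ref{lemma.limits}, so both ratios tend to $0$.

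Since the real content sits inside Theorem \ref{thm.R4}, the remaining work is mostly bookkeeping, with two places requiring care. First, the unit‑normal convention in $T_g$: in \eqref{eq.CGBboundary} it is the inward normal of the region, so on $\overline B$ it is $-\partial_r$, and one must record that \eqref{eq.localend} is precisely the statement obtained with this convention (equivalently, the outward normal of the end); here it helps that $T_g$ is odd under reversal of the normal, $T_g(-N)=-T_g(N)$, so no information is lost. Second — and this is the one genuinely non‑formal step — the degenerate case of a finite‑area end is where the identification of the two isoperimetric limits cannot be done by a mere arithmetic manipulation and one has to invoke Lemma \ref{lemma.limits}. Finally, a routine but not entirely automatic point is to choose the cut‑off so that $\hat g$ is genuinely smooth across $\partial B$ and flat near $0$ without spoiling completeness at infinity, finiteness of the total $Q$ curvature, or the sign of the scalar curvature; this causes no difficulty because only a compact piece of the metric is modified.
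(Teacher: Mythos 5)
Your proposal is correct, and it takes a genuinely different route from the paper. The paper does not prove Theorem~\ref{thm.localend} at all: it is quoted directly from Chang--Qing--Yang \cite{CQY2}, and the authors only prove the dual result for singular points, Theorem~\ref{thm.localsing}, by re-running the whole three-step scheme of Sections~\ref{sect.symm}--\ref{sect.general} localised to a punctured ball (radial ODE analysis giving $T_ge^{3v}=-\tfrac12 v'''+2v'$ on annuli, the averaging lemma for generalised normal metrics, then singularity removal). A proof of Theorem~\ref{thm.localend} in the same spirit — which is essentially what \cite{CQY2} does — would again redo all of that machinery on the end. Your capping-off argument short-circuits this: you reduce the local statement to the already-proven global Theorem~\ref{thm.R4} plus a single application of \eqref{eq.CGBboundary} on the compact ball $\overline B$, avoiding any re-derivation of the $v'''$-asymptotics or the $c_2=c_3=0$ analysis for the end. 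The individual steps all check out: $\hat g$ verifies the hypotheses of Theorem~\ref{thm.R4} with $\mu=0$; $W_{\hat g}=0$ and $L_{\hat g}\,d\sigma_{\hat g}=0$ on $\overline B$ because $\hat g$ is conformal to the Euclidean metric and $L_g\,d\sigma_g$ is a pointwise conformal invariant vanishing on Euclidean space; $T_{\hat g}|_{\dB}=T_g|_{\dB}$ since $\hat g$ is smooth across $\dB$ with $\hat g\equiv g$ on $E$, and both boundary terms are taken with respect to $N=-\partial_r$ (inward for $\overline B$, outward for $E$), which is exactly the convention under which \eqref{eq.localend} and \eqref{eq.localsing} subtract to give \eqref{eq.main}; and the additive constant $\vol_{\hat g}(B)$ in the denominator is harmless in the limit. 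One small imprecision: in the degenerate case you write that completeness ``forces $\lim_{t\to\infty}v'(t)=0$''. What is actually needed is that completeness gives $\lim v'\ge 0$ while finiteness of $\vol_g(B_r\setminus B)$ forces $\lim v'\le 0$; combining these with the argument in the proof of Lemma~\ref{lemma.limits} yields $\vol_g(\dB_r(0))\to 0$, so both isoperimetric ratios tend to zero. With that minor rewording, the proof is sound.
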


Localising our Theorem \ref{thm.R4}, we prove the following dual result for finite area singular points.

\begin{thm}[Local Chern-Gauss-Bonnet formula for a singular region]\label{thm.localsing}
Suppose that $(S\setminus\{p\},g)=(B\setminus \{0\},e^{2w}\abs{dx}^2)$ has finite area, non-negative scalar curvature near the origin, and finite total $Q$ curvature. Then
\begin{equation}\label{eq.localsing}
\frac{1}{4\pi^2}\int_{\dB}T_g e^{3w}d\sigma(x) +\frac{1}{4\pi^2}\int_{B}Q_ge^{4w}dx=\lim_{r\to 0}\, \frac{\vol_g(\dB_r(0))^{4/3}}{4(2\pi^2)^{1/3}\vol_g(B_r(0))}.
\end{equation}
\end{thm}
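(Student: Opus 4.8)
The plan is to deduce Theorem~\ref{thm.localsing} from the global formula already in hand, Theorem~\ref{thm.R4}, together with the companion local statement for ends, Theorem~\ref{thm.localend}, by capping off the singular region with a flat end and then cancelling everything coming from outside the singularity.

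First I would extend the metric. Since $w$ is smooth up to $\dB$, pick a smooth function $\tilde w$ on $\RR^4\setminus\{0\}$ with $\tilde w=w$ on $\overline{B}\setminus\{0\}$ and $\tilde w$ equal to a constant outside $B_2(0)$, and set $\tilde g=e^{2\tilde w}\abs{dx}^2$. This metric is complete at infinity (its end is a rescaled Euclidean one), has finite area over the origin and non-negative scalar curvature near the origin (both inherited from $g$, as $\tilde g=g$ near $0$), has non-negative scalar curvature at infinity (the end is flat), and has finite total $Q$ curvature, since
\begin{equation*}
\int_{\RR^4}\abs{Q_{\tilde g}}\,dV_{\tilde g}=\int_{B}\abs{Q_g}\,dV_g+\int_{B_2(0)\setminus B}\abs{Q_{\tilde g}}\,dV_{\tilde g}<\infty,
\end{equation*}
the last integral being finite because $\tilde g$ is smooth on that compact region. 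Hence Theorem~\ref{thm.R4} applies to $\tilde g$, and with $\chi(\RR^4)=1$ it yields
\begin{equation*}
\tilde\mu+1=\tilde\nu+\frac{1}{4\pi^2}\int_{\RR^4}Q_{\tilde g}\,dV_{\tilde g}=\tilde\nu+\frac{1}{4\pi^2}\int_{B}Q_g\,dV_g+\frac{1}{4\pi^2}\int_{\RR^4\setminus B}Q_{\tilde g}\,dV_{\tilde g},
\end{equation*}
where $\tilde\nu$ and $\tilde\mu$ denote the isoperimetric limits for $\tilde g$ at infinity and at the origin. Because $\tilde g=g$ on a neighbourhood of the origin, $\tilde\mu+1$ is exactly the right hand side of \eqref{eq.localsing}.

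Next I would remove everything coming from outside the singularity by applying Theorem~\ref{thm.localend} to the end $(\RR^4\setminus B,\tilde g)$. Since the end is flat we have $\vol_{\tilde g}(B_r(0))\to\infty$ while $\vol_{\tilde g}(B)=\vol_g(B)<\infty$, so the isoperimetric ratio over the annular regions $B_r(0)\setminus B$ has the same limit as over the balls $B_r(0)$; Theorem~\ref{thm.localend} thus gives
\begin{equation*}
\frac{1}{4\pi^2}\int_{\dB}T_{\tilde g}\,d\sigma_{\tilde g}-\frac{1}{4\pi^2}\int_{\RR^4\setminus B}Q_{\tilde g}\,dV_{\tilde g}=\tilde\nu.
\end{equation*}
Inserting this in the identity above makes $\tilde\nu$ together with the exterior $Q$ curvature integral combine into the boundary term, so that
\begin{equation*}
\tilde\mu+1=\frac{1}{4\pi^2}\int_{\dB}T_{\tilde g}\,d\sigma_{\tilde g}+\frac{1}{4\pi^2}\int_{B}Q_g\,dV_g.
\end{equation*}
Finally, since $\tilde g$ agrees with $g$ to infinite order along $\dB$, the boundary curvature integrand $T_{\tilde g}\,d\sigma_{\tilde g}$ on $\dB$ equals $T_g\,e^{3w}d\sigma$, and the right hand side becomes the left hand side of \eqref{eq.localsing}.

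The step I expect to be the real obstacle is not analytic -- all estimates are handled by Theorems~\ref{thm.R4} and \ref{thm.localend} -- but is the orientation bookkeeping in the last line: one must be sure that $T_g$ on $\dB$ is defined with the \emph{same} choice of unit normal in Theorem~\ref{thm.localend} and in \eqref{eq.localsing} (the boundary curvature $T_g$ changes sign if the normal is reversed), equivalently that the only discrepancy between \eqref{eq.localend} and \eqref{eq.localsing} is the sign in front of the $Q$ curvature term, which is forced by the integration domain switching from $\RR^4\setminus B$ to $B$. Beyond that, one verifies the routine points: that the extension $\tilde g$ genuinely satisfies all hypotheses of Theorem~\ref{thm.R4}, and that passing from $\vol_{\tilde g}(B_r(0))$ to $\vol_{\tilde g}(B_r(0)\setminus B)$ leaves the limit at infinity unchanged -- which works precisely because the flat cap forces $\vol_{\tilde g}(B_r(0))\to\infty$. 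One could instead argue directly, applying the Chern--Gauss--Bonnet formula with boundary \eqref{eq.CGBboundary} on the annulus $B\setminus B_r(0)$ (where $\chi=0$ and $W_g$, $L_g$ vanish by conformal flatness), letting $r\to0$, and identifying $\lim_{r\to0}\tfrac{1}{4\pi^2}\int_{\partial B_r(0)}T_g\,d\sigma_g$ with the isoperimetric ratio via the singularity removal theorem of Section~\ref{sect.general} and the radial computation of Section~\ref{sect.symm}; but this route reproves, rather than reuses, the hard work.
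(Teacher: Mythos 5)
Your argument is correct, and it takes a genuinely different route from the paper. The paper proves Theorem~\ref{thm.localsing} by \emph{localising the entire three-step machine} that established Theorem~\ref{thm.R4}: it redoes the radial ODE analysis on the punctured ball (obtaining $\lim_{t\to-\infty}v'''(t)=0$ and then $\tfrac{1}{4\pi^2}\int_{\dB}T_g e^{3w}d\sigma+\tfrac{1}{4\pi^2}\int_B Q_g e^{4w}dx=\lim_{t\to-\infty}v'(t)$ via the rotationally symmetric formula $T_ge^{3v}=-\tfrac12 v'''+2v'$ from \cite{CQ}), then introduces generalised normal metrics on the punctured ball together with the local analogues of Lemma~\ref{lemma.1} and Corollary~\ref{cor.compfinite}, and finally proves a local singularity-removal lemma (using B\^{o}cher's theorem on $B\setminus\{0\}$). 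You instead \emph{reuse the statements} of Theorem~\ref{thm.R4} and Theorem~\ref{thm.localend} as black boxes by capping off the singular region with a flat end; your verification of the hypotheses of Theorem~\ref{thm.R4} for the extension $\tilde g$ is sound, the identification of $\tilde\nu$ with the annular isoperimetric limit in \eqref{eq.localend} is correct because $\vol_{\tilde g}(B)<\infty$ and $\vol_{\tilde g}(B_r)\to\infty$, the cancellation of the exterior $Q$ integrals is exact, and the boundary terms match since $\tilde w$ agrees with $w$ to all orders on $\dB$ (so $T_{\tilde g}e^{3\tilde w}=T_g e^{3w}$ there) and since the paper's convention -- explicitly stated in the remark after the theorem -- uses the same normal direction in both \eqref{eq.localend} and \eqref{eq.localsing}. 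Your route is shorter and more modular, and it makes explicit the logical dependence (Theorem~\ref{thm.localsing} follows from Theorems~\ref{thm.R4} and~\ref{thm.localend}); what it gives up is the self-contained local analysis that the paper develops, in particular the local generalised-normal-metric machinery, which the paper may want anyway for its own sake. There is no circularity: Theorem~\ref{thm.R4} is proved in Sections~\ref{sect.symm}--\ref{sect.general} and Theorem~\ref{thm.localend} is quoted from \cite{CQY2}, neither of which uses Theorem~\ref{thm.localsing}. The one hypothesis you should state explicitly, though it is implicit in Definition~\ref{def.manifold}, is that $w$ is smooth up to $\dB$ so that the cut-off extension $\tilde w$ exists.
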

\begin{rem}
Note that in the above two theorems we work with the identical expression for $T_g$, i.e.~not reversing the orientation of the boundary and the normal $N$. For a conformal metric $g=e^{2w}\abs{dx}^2$ on $\RR^4\setminus\{0\}$, we obtain \eqref{eq.main} by subtracting \eqref{eq.localsing} from \eqref{eq.localend}.
\end{rem}
To prove Theorem \ref{thm.localsing} we slightly modify the three steps of the proof of Theorem \ref{thm.R4}. First, assume that $w$ is a radial function on $\RR^4\setminus B$, and denote, as in Section \ref{sect.symm}, $\abs{x}=r=e^t$ and $v=w+t$. Then we have
\begin{equation}
v''''-4v''=2Q_ge^{4v}=: F, \qquad -\infty< t \leq 0.
\end{equation}
Following Section \ref{sect.symm} almost verbatim (but changing integration boundaries from $+\infty$ to $0$), we obtain an explicit solution $f(t)$ of this ODE, satisfying
\begin{equation*}
\lim_{t\to -\infty} f'(t)=\frac{1}{8}\int_{-\infty}^0 F(x)dx
\end{equation*}
and
\begin{equation*}
\lim_{t\to -\infty} f''(t)=\lim_{t\to -\infty} f'''(t)=0.
\end{equation*}
As in \eqref{eq.v}, $v(t)$ is of the form
\begin{equation*}
v(t)=c_0+c_1 t+c_2 e^{-2t}+c_3 e^{2t}+f(t)
\end{equation*}
and (as in Lemma \ref{lemma.c23}), under the condition of non-negative scalar curvature near the origin, we obtain $c_2=0$ and thus in particular
\begin{equation}\label{eq.thirddev}
\lim_{t\to -\infty} v'''(t)=0.
\end{equation}
In \cite[Remark 3.1]{CQ}, Chang-Qing proved that in this special rotationally symmetric situation the boundary curvature $T_g$ satisfies
\begin{equation*}
T_g e^{3v}=-\frac{1}{2}v'''+2v'.
\end{equation*}
Plugging this into the Chern-Gauss-Bonnet formula with boundary, we have
\begin{align*}
0&=\frac{1}{4\pi^2}\int_{s\leq t\leq 0} Q_g e^{4v}dx + \frac{1}{4\pi^2}\int_{t=0} T_g e^{3v}d\sigma - \frac{1}{4\pi^2}\int_{t=s} T_g e^{3v}d\sigma\\
&=\frac{1}{4\pi^2}\int_{s\leq t\leq 0} Q_g e^{4v}dx + \frac{1}{4\pi^2}\int_{t=0} T_g e^{3v}d\sigma - \frac{1}{4\pi^2}\abs{\Sph^3}\Big({-\frac{1}{2}}v'''(s)+2v'(s)\Big).
\end{align*}
Taking the limit as $s\to -\infty$ and using \eqref{eq.thirddev}, we obtain
\begin{equation}\label{eq.56}
\frac{1}{4\pi^2}\int_{\dB}T_g e^{3w}d\sigma(x) +\frac{1}{4\pi^2}\int_{B}Q_ge^{4w}dx=\lim_{t\to -\infty}v'(t).
\end{equation}
The fact that 
\begin{equation}\label{eq.57}
\lim_{t\to-\infty}v'(t)=\lim_{t\to-\infty}C_{3,4}(e^t)=\lim_{r\to 0}\, \frac{\vol_g(\dB_r(0))^{4/3}}{4(2\pi^2)^{1/3}\vol_g(B_r(0))}
\end{equation}
then follows again exactly as in Lemma \ref{lemma.limits}, which plugged into \eqref{eq.56} proves Theorem \ref{thm.localsing} for the case of rotationally symmetric metrics $g$.\\

Next, we localise Definition \ref{defn.normal} and introduce a notion of generalised normal metrics on the punctured ball.

\begin{defn}[Generalised normal metrics on the punctured ball]\label{defn.locnormal}
Let $g=e^{2w}\abs{dx}^2$ be a metric on $B \setminus \{0\}$, where $B$ is the unit ball in $\RR^4$. Assume that $g$ has finite total $Q$ curvature,
\begin{equation*}
\int_{B} \abs{Q_g} e^{4w} dx < \infty. 
\end{equation*}   
We say that $g$ is a \emph{generalised normal metric}, if $w$ has the expansion
\begin{equation}
w(x)=\frac{1}{4\pi^2}\int_{B}\log\Big(\frac{\abs{y}}{\abs{x-y}}\Big)\, Q_g(y)\,e^{4w(y)}\,dy+\alpha\log\abs{x}+h(x)
\end{equation}
for some constants $\alpha\in\RR$ and a biharmonic function $h$ on $B$. (We would like to emphasise here that $h$ is defined and biharmonic on all of $B$, in particular also over the origin!) For such a generalised normal metric, we then define the \emph{averaged metric} as before by $\bar{g}=e^{2\bar{w}}\abs{dx}^2$, where
\begin{equation*}
\bar{w}(r):=\dashint_{\dB_r(0)}w(x)\,d\sigma(x).
\end{equation*}
\end{defn}

\begin{lemma}
Suppose that the metric $e^{2w}\abs{dx}^2 $ on $B \setminus\{0\}$ is a generalised normal metric in the sense of Definition \ref{defn.locnormal}. Then for any number $k>0$ we have that 
\begin{align}
\dashint_{\dB_r(0)} e^{kw} d\sigma(x) = e^{k\bar w(r)} e^{o(1)}
\end{align}
where $o(1)\to 0$ as $r\to 0$.
\end{lemma}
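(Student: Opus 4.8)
The plan is to re-run the proof of Lemma \ref{lemma.1} almost verbatim, the only new feature being that the rotationally symmetric correction term $\alpha\log\abs{x}+C$ from Definition \ref{defn.normal} is now replaced by $\alpha\log\abs{x}+h(x)$ with $h$ biharmonic on all of $B$, hence smooth and in particular continuous at the origin. First I would split
\begin{equation*}
w(x)=w_0(x)+\alpha\log\abs{x}+h(x),\qquad w_0(x):=\frac{1}{4\pi^2}\int_{B}\log\Big(\frac{\abs{y}}{\abs{x-y}}\Big)\,Q_g(y)\,e^{4w(y)}\,dy,
\end{equation*}
average over $\dB_r(0)$, and use that $\log\abs{x}=\log r$ is constant there, so that $\bar w(r)=\bar w_0(r)+\alpha\log r+\bar h(r)$ with $\bar h(r):=\dashint_{\dB_r(0)}h\,d\sigma$, and on $\dB_r(0)$ the identity $w(x)-\bar w(r)=\big(w_0(x)-\bar w_0(r)\big)+\big(h(x)-\bar h(r)\big)$ holds, the $\alpha\log$-terms cancelling exactly.

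Next I would dispose of the $h$-contribution: since $h$ extends continuously over the origin, $\sup_{\dB_r(0)}\abs{h(x)-h(0)}\to 0$ as $r\to 0$, hence also $\abs{\bar h(r)-h(0)}\to 0$, so $\eps(r):=\sup_{\dB_r(0)}\abs{h(x)-\bar h(r)}\to 0$ and $e^{k(h(x)-\bar h(r))}=1+o(1)$ uniformly on $\dB_r(0)$. Therefore
\begin{equation*}
\dashint_{\dB_r(0)} e^{k(w(x)-\bar w(r))}\,d\sigma(x)=\big(1+o(1)\big)\dashint_{\dB_r(0)} e^{k(w_0(x)-\bar w_0(r))}\,d\sigma(x),
\end{equation*}
and it remains only to show that the right-hand side is $e^{o(1)}$ as $r\to 0$.

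For this last point I would apply the argument of Lemma \ref{lemma.1} to the potential part $w_0$, splitting the domain $B$ into $B_{r/2}(0)$, $B\setminus B_{3r/2}(0)$ and the annulus $B_{3r/2}(0)\setminus B_{r/2}(0)$ exactly as there, obtaining $w_0(x)=(\text{radial term})+o(1)$ on $\dB_r(0)$ together with the estimate $\dashint_{\dB_r(0)}\big(e^{kw_3}-1\big)\,d\sigma=o(1)$ for the near-diagonal annular piece $w_3$ via the same rearrangement of $E_M$ onto a $3$-disc, and then Jensen's inequality as in the proof of Lemma \ref{lemma.1}; all of those estimates used only $\int_{\RR^4}\abs{Q_g}e^{4w}<\infty$ (here $\int_B\abs{Q_g}e^{4w}<\infty$) and $r\to 0$. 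The only point requiring a little care, and the closest thing to an obstacle, is checking that this three-region argument genuinely survives the two changes: replacing the ambient domain $\RR^4$ by $B$, and replacing the radial correction term by the non-radial $h$. Both are essentially cosmetic — the first because restricting the domain of the potential only shrinks the relevant integrals of $\abs{Q_g}e^{4w}$, so no estimate is lost, and the second because continuity of $h$ at the origin is the only property of that term used in the cancellation above. Combining the two displays then gives $\dashint_{\dB_r(0)} e^{kw}\,d\sigma=e^{k\bar w(r)}e^{o(1)}$ as $r\to 0$.
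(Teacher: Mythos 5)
Your proposal is correct and follows essentially the same route as the paper: the paper likewise reduces to Lemma \ref{lemma.1} applied to the potential part $w_0$ (with $\RR^4$ replaced by $B$) and handles the biharmonic term $h$ by using its smoothness over the origin to see that $\dashint_{\dB_r(0)} e^{kh}\,d\sigma \to e^{kh(0)}$ and $e^{k\bar h(r)} \to e^{kh(0)}$ as $r\to 0$. Your observation that $e^{k(h(x)-\bar h(r))}=1+o(1)$ uniformly on $\dB_r(0)$ is just a slightly more explicit version of this same point.
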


\begin{proof}
This is analogous to Lemma \ref{lemma.1} and the proof of Lemma \ref{lemma.1} goes through almost verbatim. We only need to modify the domains of integration (essentially changing $\RR^4$ to $B$) and take care of the additional function $h(x)$. However, as $h(x)$ is biharmonic (and thus smooth) over the origin, we have
\begin{equation*}
\lim_{r\to 0}\dashint_{\dB_r(0)} e^{kh(x)} d\sigma(x) = e^{k\bar{h}(0)} =\lim_{r\to 0}e^{k\bar{h}(r)}.
\end{equation*}
We leave the remaining minor modifications to the reader.
\end{proof}

As a direct consequence, analogous to the first part of Theorem \ref{thm.sec3}, we then see that the mixed volumes taken with respect to the two metrics $g$ and $\bar{g}$ satisfy
\begin{align}
V_3(r)&=\bar{V}_3(r)(1+\eps(r)),\label{eq.510}\\
\tfrac{d}{dr}V_4(r)&=\tfrac{d}{dr}\bar{V}_4(r)(1+\eps(r)),\label{eq.511}
\end{align}
where $\eps(r)\to 0$ as $r\to 0$. Moreover, as in Corollary \ref{cor.compfinite}, if $g$ is a finite area metric on $B\setminus\{0\}$, then so is the averaged metric $\bar{g}$. Finally, $\bar{g}$ is clearly rotationally symmetric by definition. Thus, $v=\bar{w}+t$ satisfies \eqref{eq.56} and \eqref{eq.57}, that is
\begin{equation}\label{eq.5bars}
\frac{1}{4\pi^2}\int_{\dB}T_{\bar{g}}e^{3\bar{w}}d\sigma(x) +\frac{1}{4\pi^2}\int_{B}Q_{\bar{g}}e^{4\bar{w}}dx=\lim_{t\to -\infty}\bar{v}'(t) = \lim_{r\to 0}\frac{\bar{V}_3^{4/3}(r)}{(\pi^2/2)^{1/3}\,\bar{V}_4(r)}.
\end{equation}
As
\begin{equation*}
Q_{\bar{g}}e^{4\bar{w}}(r)=\dashint_{\dB_r(0)} \frac{1}{2}\Lap^2\bar{w}\, d\sigma(x) = \dashint_{\dB_r(0)} \frac{1}{2}\Lap^2 w\, d\sigma(x) = \dashint_{\dB_r(0)}Q_ge^{4w}d\sigma(x)
\end{equation*}
as well as
\begin{equation*}
T_{\bar{g}}e^{3\bar{w}}(r)=P_3\bar{w}=\overline{P_3w}=\dashint_{\dB_r(0)}T_g e^{3w}d\sigma(x),
\end{equation*}
where $P_3$ denotes the boundary operator associated to the Paneitz operator (see e.g. \cite{CQY2}), we can drop all the bars on the left hand side of \eqref{eq.5bars}. Due to \eqref{eq.510} and \eqref{eq.511}, we can also drop the bars on the right hand side of \eqref{eq.5bars}. Hence, we have proved Theorem \ref{thm.localsing} for the special case of generalised normal metrics on the punctured ball.\\

Finally, Theorem \ref{thm.localsing} follows from the above combined with the following lemma.
\begin{lemma}
Suppose that $(S\setminus\{p\},g)=(B\setminus \{0\},e^{2w}\abs{dx}^2)$ has finite area, non-negative scalar curvature near the origin, and finite total $Q$ curvature. Then $g$ is a generalised normal metric as in Definition \ref{defn.locnormal}.
\end{lemma}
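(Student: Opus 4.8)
The plan is to localise the proof of the Singularity Removal Theorem \ref{thm.singremove} to the punctured ball, taking advantage of one genuine simplification: Definition \ref{defn.locnormal} only requires the remainder $h$ to be \emph{biharmonic} on $B$, not constant, so I can stop as soon as $h$ has been produced and do not need the Liouville-type estimates that close the proof of Theorem \ref{thm.singremove}. First I would set
\begin{equation*}
v(x):=\frac{1}{4\pi^2}\int_{B}\log\Big(\frac{\abs{y}}{\abs{x-y}}\Big)\,Q_g(y)\,e^{4w(y)}\,dy,\qquad \psi:=w-v .
\end{equation*}
As in the proof of Theorem \ref{thm.singremove}, $v$ satisfies $\Lap^2 v=2Q_g e^{4w}$ on $B$ in the distributional sense; since $\Lap^2 w=2Q_g e^{4w}$ holds classically on $B\setminus\{0\}$ by \eqref{eq.Paneitz2}, this gives $\Lap^2\psi=0$ on $B\setminus\{0\}$, so that $\Lap\psi$ is harmonic on $B\setminus\{0\}$.

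I would then run the curvature estimate of Theorem \ref{thm.singremove}, but only on a punctured neighbourhood of the origin. By the conformal transformation law $\Lap w+\abs{\nabla w}^2=-\tfrac16\R_g e^{2w}$ and the mean value equality for the harmonic function $\Lap\psi$, for $0<\abs{x_0}<\rho_0$ small and $r=\abs{x_0}/2$ — chosen so that $B_r(x_0)\Subset B\setminus\{0\}$ and, using non-negativity of $\R_g$ near the origin, $\R_g\geq 0$ on $B_r(x_0)$ — one obtains
\begin{equation*}
\Lap\psi(x_0)=-\dashint_{\dB_r(x_0)}\!\Big(\abs{\nabla w}^2+\tfrac16\R_g e^{2w}\Big)d\sigma-\dashint_{\dB_r(x_0)}\!\Lap v\,d\sigma\ \leq\ \dashint_{\dB_r(x_0)}\!\abs{\Lap v}\,d\sigma .
\end{equation*}
Since $\abs{\Lap v(x)}\leq\tfrac{1}{2\pi^2}\int_B\abs{x-y}^{-2}\abs{Q_g}e^{4w}\,dy$, the averaged Green's function identity \eqref{eqn_integral} gives $\Lap\psi(x_0)\leq C\abs{x_0}^{-2}$ near the origin. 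Hence, after adding a suitable multiple of $\log\abs{x}$ (exactly as in the proof of Theorem \ref{thm.singremove}), $\Lap(\psi+c\log\abs{x})$ is harmonic and non-positive on a punctured neighbourhood of $0$, and Böcher's theorem \cite[Thm 3.9]{A01} yields $\Lap(\psi+c\log\abs{x})=\beta\abs{x}^{-2}+b(x)$ with $\beta\leq 0$ and $b$ harmonic across $0$. Since $\abs{x}^{-2}$ is a constant multiple of $\Lap\log\abs{x}$, it follows that for the corresponding $\alpha\in\RR$ the function $h:=w-v-\alpha\log\abs{x}$ satisfies $\Lap h=b$ with $b$ harmonic on all of $B$, i.e. $h$ is biharmonic on $B$. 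This is precisely the expansion in Definition \ref{defn.locnormal}, so $g$ is a generalised normal metric on the punctured ball; the finite-area assumption moreover forces $\alpha>-1$, cf. the remark after Definition \ref{defn.normal}.

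The argument is thus a fairly mechanical localisation of Theorem \ref{thm.singremove}, and the one point that genuinely uses all the hypotheses in combination is the behaviour at the singular point: one must check that the potential $v$ is well defined (so that the $\log\abs{y}$ factor in the kernel does no harm) and that the averaging balls $B_{\abs{x_0}/2}(x_0)$ indeed remain inside $B\setminus\{0\}$ — the former handled as in Theorem \ref{thm.singremove} using finite area and finite total $Q$ curvature, the latter by confining the estimate to a neighbourhood of $0$. Since $\psi$ is smooth away from the origin, there is nothing to remove there, so restricting to a neighbourhood of the puncture loses nothing.
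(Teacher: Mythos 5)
Your proposal is correct and follows essentially the same route as the paper: define the potential $v$ via the $\log(\abs{y}/\abs{x-y})$ kernel, set $\psi=w-v$, use the mean-value property of the harmonic function $\Lap\psi$ together with the scalar-curvature sign to get $\Lap\psi(x_0)\leq C\abs{x_0}^{-2}$ near the origin, and then absorb the $\abs{x}^{-2}$ term from B\^ocher's theorem into a $\log\abs{x}$ correction, leaving a biharmonic $h$ on $B$. You correctly identify the one genuine simplification over Theorem~\ref{thm.singremove} (the Liouville-type closure is unnecessary since Definition~\ref{defn.locnormal} only asks for $h$ biharmonic, not constant), and you are also right that the estimate need only be run on a punctured neighbourhood of $0$ — the paper writes ``for all $x_0\in B\setminus\{0\}$'' but, since $\R_g\geq 0$ is only assumed near the origin, the inequality really only holds there, which is all that B\^ocher's theorem and the subsequent extension across the puncture require.
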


\begin{proof} 
The proof of this lemma is very similar to the first part of the proof of Theorem \ref{thm.singremove}. We first denote
\begin{align*}
v(x) = \frac{1}{4\pi^2} \int_{B} \log\frac{\abs{y}}{\abs{x -y}} Q_g(y) e^{4w(y)} dy,
\end{align*}
and define $\psi(x) = w(x) - v(x)$. We then note that $\Lap^2 \psi =0 $ on $B  \setminus \{0\}$. In particular, we see that $\Lap\psi$ is harmonic on $B \setminus \{0\}$ and thus by the mean value equality
\begin{align*}
\Lap \psi(x_0) & = \dashint_{\dB_r(x_0)} \Lap \psi(x) d\sigma(x)\\
& = - \; \dashint_{\dB_r(x_0)} \left(\abs{\nabla w}^2 + \frac{\R_g}{6} \right) d\sigma(x) - \dashint_{\dB_r(x_0)} \Lap v(x) d\sigma(x),
\end{align*} 
if $\dB_r(x_0)\subset B \setminus\{0\}$. Here we used the transformation formula for the scalar curvature\begin{align*}
\Lap w + \abs{\nabla w}^2 = -\frac{\R_g}{6} e^{2w}.
\end{align*}
For sufficiently small $\abs{x_0}$, we know that $\R_g \geq 0$ by assumption. Moreover, by an argument similar to the one from Lemma \ref{lemma.lap}, we have that
\begin{align*}
\left| \int_{\dB_r(x_0)} \Lap v(x) d\sigma(x) \right| &= \frac{1}{2\pi^2} \int_{B}  \left( \dashint_{\dB_1(0)} \frac{1}{\abs{r \sigma + x_0 - y}^2} d\sigma \right) Q_g(y) e^{4w(y)} dy\\
& \leq \frac{1}{2\pi^2 r^2} \int_{B} \abs{Q_g(y)} e^{4w(y)} dy.  
\end{align*}
Hence, taking for example $r = \frac{\abs{x_0}}{2}$, we see that 
\begin{align*}
\Lap \psi(x_0) \leq \frac{C}{\abs{x_0}^2} 
\end{align*}
for some constant $C$ and all $x_0 \in B \setminus \{ 0 \} $ and hence 
\begin{align*}
\Lap \left(\psi(x_0) + \frac{C}{2} \log \abs{x_0} \right) \leq 0. 
\end{align*}
This means that $\left(\psi(x) + \frac{C}{2} \log \abs{x} \right)$ is a sub-harmonic and biharmonic function on $B \setminus \{0\}$, i.e. $\Lap \left( \psi(x) + \frac{C}{2} \log \abs{x} \right)$ is harmonic and non-positive and thus by B\^ocher's Theorem, see \cite[Thm 3.9]{A01}, we conclude 
\begin{align*}
\Lap \left( \psi(x) + \frac{C}{2} \log \abs{x} \right) = \beta \frac{1}{\abs{x}^2} + b(x)
\end{align*}
where $\beta\leq 0$ and $b(x)$ is a harmonic function on $B$ (including over the origin). Hence we find that 
\begin{align*}
h(x) = \psi(x) + \frac{C+\beta}{2} \log \abs{x} = w - v - \alpha \log \abs{x}
\end{align*}
is a biharmonic function on $B$. We have thus proved \eqref{eq.defnormal} for $w$, meaning that $w$ is a generalised normal metric in the sense of Definition \ref{defn.locnormal}.
\end{proof}

It is now easy to prove our main theorem.

\begin{proof}[Proof of Theorem \ref{thm.manifold}]
According to Definition \ref{def.manifold}, a manifold as in Theorem \ref{thm.manifold} can be split up into a compact manifold $N$ with boundary and a finite number of ends and singular regions, 
\begin{equation*}
M=N\cup\Big(\bigcup_{i=1}^k E_i\Big)\cup\Big(\bigcup_{j=1}^{\ell} S_j\Big).
\end{equation*}
Applying the Chern-Gauss-Bonnet formula for manifolds with boundary, given in \eqref{eq.CGBboundary} to $N$, as well as using Theorem \ref{thm.localend} for each of the ends and Theorem \ref{thm.localsing} for each of the singular regions, we immediately obtain the claimed Chern-Gauss-Bonnet type formula \eqref{eq.mainCGBformula}, noticing that all the boundary terms cancel each other out (as they obviously all appear twice with opposite signs). This finishes the proof of the theorem.
\end{proof}

\section*{Acknowledgements}
Parts of this work were carried out during two visits of HN at Queen Mary University of London. He would like to thank the university for its hospitality. These visits have been financially supported by RB's Research in Pairs Grant from the London Mathematical Society as well as HN's AK Head Travelling Scholarship from the Australian Academy of Science. RB would also like to thank the EPSRC for partially funding his research under grant number EP/M011224/1.

\makeatletter
\def\@listi{%
  \itemsep=0pt
  \parsep=1pt
  \topsep=1pt}
\makeatother
{\fontsize{10}{11}\selectfont

\vspace{10mm}

Reto Buzano (M\"{u}ller)\\
{\sc School of Mathematical Sciences, Queen Mary University of London, London E1 4NS, United Kingdom}\\

Huy The Nguyen\\
{\sc School of Mathematical Sciences, Queen Mary University of London, London E1 4NS, United Kingdom}\\
\end{document}